\documentclass[12pt, oneside, notitlepage]{amsart}
\usepackage[margin=3cm]{geometry}
\usepackage{amsmath,amssymb,amsthm,graphicx,mathrsfs,bbm,url}
\usepackage{amsthm}
\usepackage{wrapfig}
\usepackage{enumitem}
\usepackage{mathtools}
\usepackage[utf8]{inputenc}
 \usepackage[T1]{fontenc}
\usepackage[usenames,dvipsnames]{color}
\usepackage[colorlinks=true,linkcolor=Red,citecolor=Green]{hyperref}
\usepackage[super]{nth}
\usepackage[open, openlevel=2, depth=3, atend]{bookmark}
\hypersetup{pdfstartview=XYZ}
\usepackage[font=footnotesize]{caption}
\usepackage{epstopdf}
 
\usepackage{hyperref}

\theoremstyle{plain}
\newtheorem{theorem}{Theorem}[section]
\newtheorem*{theorem*}{Theorem}

\newtheorem{lemma}[theorem]{Lemma}
\newtheorem{proposition}[theorem]{Proposition}
\newtheorem{corollary}[theorem]{Corollary}

\theoremstyle{definition}
\newtheorem{definition}[theorem]{Definition}
\newtheorem{example}[theorem]{Example}

\theoremstyle{remark}
\newtheorem{remark}[theorem]{Remark}

\numberwithin{equation}{section}

\newcommand{\C}{\mathbb{C}}
\newcommand{\R}{\mathbb{R}}

\newcommand{\Z}{\mathbb{Z}}
\newcommand{\cF}{\mathcal{F}}

\newcommand{\X}{\mathbf{X}}

\newcommand{\eps}{\varepsilon}

\newcommand{\mc}{\mathcal}

\newcommand{\dd}{\mathrm{d}}
\newcommand{\leaf}{\mathrm{leaf}}
\newcommand{\Cchart}{\Theta}

\DeclareMathOperator{\vol}{vol}

\DeclareMathOperator{\Op}{Op}
\DeclareMathOperator{\WF}{WF}

\DeclareMathOperator{\supp}{supp}

\DeclareMathOperator{\comp}{comp}

\newcommand{\be}{\begin{equation}}
\newcommand{\ee}{\end{equation}}

\title
[Propagation of regularity along unstable manifolds]
{Propagation of regularity along unstable manifolds}

\author{Thibault Lefeuvre}
\address{Universit\'e Paris-Saclay, Laboratoire de math\'ematiques d’Orsay, 91405, Orsay, France.}
\email{thibault.lefeuvre1@universite-paris-saclay.fr}

\author{Rafael Potrie}
\address{Centro de Matem\'atica, Facultad de Ciencias, Universidad de la Rep\'ublica, Montevideo, Uruguay \ and \ IRL-IFUMI2030 CNRS, Laboratorio del Plata.}
\email{rpotrie@cmat.edu.uy}

\begin{document}
%% Résumé

%% Résumé anglais
\begin{abstract}
Let $\varphi_t : M \to M$ be a flow on a smooth closed connected manifold $M$ that preserves and expands a foliation $\mc{F}$. We establish a theorem of propagation of regularity along the leaves of $\mc{F}$ for sections of vector bundles satisfying a transport equation involving the generator of a cocycle over $(\varphi_t)_{t \in \R}$.  As a consequence, we prove a regularity result for Pollicott-Ruelle resonant states: if such state is smooth in restriction to a piece of an unstable leaf, then it is in fact smooth over the entire manifold. We also announce further applications related to joint integrability of extreme bundles of partially hyperbolic diffeomorphisms. The proofs rely on a leafwise semiclassical pseudodifferential calculus adapted to a foliated space, which may be of independent interest.
\end{abstract}

\maketitle

\section{Introduction}

Let $M$ be a smooth closed connected manifold equipped with an arbitrary Riemannian metric $g$. Consider a smooth flow $\varphi_t : M \to M$ generated by a vector field $X$, and assume that there exists a foliation $\cF$ with smooth leaves, continuous transverse structure, and invariant under the flow, such that the leaves of $\cF$ are uniformly expanded. Precise definitions of such foliations are given in \S\ref{ss.deffoliations} (see also \S\ref{sssection:expanded-foliation}), but one may keep in mind the strong unstable foliation of a partially hyperbolic or Anosov flow.

Since the leaves of $\cF$ are smooth, it makes sense to consider a vector bundle $E \to M$ with a smooth Hermitian (resp. Euclidean) metric along leaves, equipped with a metric connection $\nabla^E$. The vector bundle may be only smooth along leaves of $\cF$ but not necessarily along other directions. See \S\ref{sssection:leafwise-smooth-bundles} for a definition of leafwise smooth vector bundles. This assumption will be discussed in the main results.

Let $\boldsymbol{\varphi}_t : E \to E$ be a fiberwise linear extension of the flow $\varphi_t : M \to M$ to $E$ (i.e. a linear cocycle). We denote by $C^\infty_{\mathrm{leaf}}(M,E)$ the space of (continuous) \emph{leafwise smooth sections}, that is sections which are smooth when restricted to leaves (see \S\ref{sssection:smooth-functions}). The \emph{propagator} is the (semi-)group of operators
\[
e^{-t\X} : C^\infty_{\mathrm{leaf}}(M,E) \to C^\infty_{\mathrm{leaf}}(M,E), \quad e^{-t\X} u (x) = \boldsymbol{\varphi}_t(u(\varphi_{-t}x)) \in E_x, \forall u \in C^\infty_{\mathrm{leaf}}(M,E).
\]
The operator
\[
\X : C^\infty_{\mathrm{leaf}}(M,E) \to C^\infty_{\mathrm{leaf}}(M,E)
\]
is a differential operator of order $1$ called the generator (of the propagator), satisfying the Leibniz rule:
\[
\X(f \otimes u) = X f \otimes u + f \otimes \X u, \qquad \forall f \in C^\infty_{\mathrm{leaf}}(M), u \in C^\infty_{\mathrm{leaf}}(M,E).
\]
We are interested in understanding the regularity properties of sections of $E$ satisfying a transport equation. For instance, if $u  \in C^0(M,E)$ is a section such that $e^{t\X}u=u$ for all $t \in \R$, we want to see under which conditions we can improve its regularity. These conditions are typically phrased as follows: if the restriction of $u$ to an open set $U$ of a leaf is smooth, then $u$ is globally leafwise smooth. More generally, one can study the regularity of distributional sections $u \in \mc{D}'(M,E)$ satisfying a wavefront set condition and solution to the transport equation
\begin{equation}
\label{equation:transport}
(\X-\lambda) u = 0, \qquad \lambda \in \C.
\end{equation}
It turns out that depending on the properties of $\X$, one has a threshold regularity above which being regular in some piece of leaf forces global leafwise smoothness.

\subsection{Propagation of regularity} 

Let us state our main result and unfold its applications. Given $x \in M$, let $W^u(x) \in \mc{F}$ be the leaf passing through $x$. Let $W^u_\eps(x)$ denote the $\eps$-ball in the leaf (computed with respect to the metric induced by $g$ on leaves). We also define the \emph{positive flow-out} of the unstable $\eps$-neighborhood ($\eps > 0$) of $x$ by:
\begin{equation}
\label{equation:omegaplus}
\Omega_+(x,\eps) :=  \bigcup_{t \geq \nu_0 |\log(\eps)|} \varphi_t(W^u_{\eps/2}(x)),
\end{equation}
where the constant $\nu_0 > 0$ is chosen such that for $t \geq \nu_0 |\log(\eps)|$, $\varphi_t(W^u_{\eps}(x))$ contains $W^u_{1}(\varphi_t y)$ for all $y \in W^u_{\eps/2}(x)$. We also introduce
\[
\Lambda_+(x,\eps)= \bigcap_{T>0} \overline{ \bigcup_{t \geq T} \varphi_t(W^u_{\eps/2}(x))}.
\]
Additionally, we need to introduce two exponents. We let $\lambda_1/2 > 0$ be the maximal exponential rate of growth of the propagator $(e^{-t\X})_{t \in \R}$ on $L^2$ in restriction to a leaf, where the maximum runs over all possible leaves $L \in \mc{F}$ (see \eqref{equation:growth1} for a precise definition), and $\lambda_2 > 0$ be the minimal expansion rate of the (co)tangent bundle to the foliation (see \eqref{equation:growth2}). Finally, define
\begin{equation}
\label{equation:threshold}
\boldsymbol{\mu} := \dfrac{\lambda_1}{4\lambda_2}.
\end{equation}

Let us state our main results:

\begin{theorem}
\label{corollary:main}
Assume that $E \to M$ is a leafwise smooth vector bundle. Let $\lambda \in \C$, $\gamma > 0$ and let $N \geq 0$ be an integer such that
\begin{equation}
\label{equation:regularity-threshold}
N > \boldsymbol{\mu} + \Re(\lambda)/2\lambda_2.
\end{equation}
There exist constants $C := C(\X, N, \gamma) > 0$ and $\nu := \nu(\X,N) > 0, \nu' := \nu'(\X,N)>0$, such that the following holds. Let $u \in C^0(M,E)$ be a section such that 
$e^{t\X} u = e^{\lambda t} u$, and assume there exists $x_0 \in M$, $\eps > 0$ such that $u|_{W^u(x_0)} \in H^{2N}(W^u(x_0,\eps))$.

Then for all $x = \varphi_t y \in \Omega_+(x_0,\eps)$ (where $y \in W^u_{\eps/2}(x_0)$), $u|_{W^{u}_1(x)} \in H^{2N}(W^u_1(x))$ and the following inequality holds: 
\begin{equation}
\label{equation:bound-main-corollary}
\|u\|_{H^{2N}(W^u_1(x))} \leq C \left(\|u\|_{C^0(M,E)} + \eps^{-\nu'} e^{-\nu t} \|u\|_{H^{2N}(W^u_\eps(x_0))}\right).
\end{equation}
In addition, for all $x \in \Lambda_+(x_0,\eps)$, $u|_{W^{u}_1(x)} \in H^{2N}(W^u_1(x))$ and 
\begin{equation}
\label{equation:bound-main2-corollary}
\|u\|_{H^{2N}(W^u_1(x))} \leq C\|u\|_{C^0(M,E)}.
\end{equation}

\end{theorem}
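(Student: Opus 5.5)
We use $e^{t\X}u=e^{\lambda t}u$ in the form $u=e^{\lambda t}e^{-t\X}u$ for $t \geq 0$, and propagate a leafwise energy estimate forward in time along the expanded leaves. We fix a leafwise elliptic operator of order $2N$, namely $P=(1+\Delta_{\leaf})^{N}$ built from the leafwise metric and the connection $\nabla^E$, and a cutoff $\chi$ supported in a unit leaf ball. The key quantity is $\|\chi P u\|_{L^2(W^u_1(x))}$ for $x=\varphi_t y$. We would work with the leafwise semiclassical pseudodifferential calculus announced in the abstract, with semiclassical parameter $h=e^{-\lambda_2 t}$.

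\textbf{Step 1 (conjugation).} We compute $e^{t\X}(\chi P)e^{-t\X}$, which is a leafwise differential operator on $\varphi_{-t}(W^u_1(x))\subset W^u_{\eps}(x_0)$. Since the cotangent bundle of the foliation is expanded at rate at least $\lambda_2$, pulling back a unit-scale operator of order $2N$ produces an operator whose principal symbol is bounded by $e^{-2N\lambda_2 t}$ times an order-$2N$ symbol, plus lower-order terms that the calculus controls. The cocycle contributes at most $e^{\lambda_1 t/2}$ in $L^2$ on leaves, by \eqref{equation:growth1}, and the factor $e^{\lambda t}$ contributes $e^{\Re(\lambda)t}$. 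The Jacobian change of $L^2$ norms is absorbed into $\lambda_1$. Altogether, the top-order part is bounded by
\[
C\eps^{-\nu'} e^{(\lambda_1/2+\Re\lambda-2N\lambda_2)t}\|u\|_{H^{2N}(W^u_\eps(x_0))}.
\]
Here $\eps^{-\nu'}$ comes from the fact that $\chi\circ\varphi_t$ is a cutoff at scale $\eps$, obtained once $t\ge\nu_0|\log\eps|$. By \eqref{equation:regularity-threshold} the exponent equals $-\nu<0$, which gives the second term in \eqref{equation:bound-main-corollary}.

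\textbf{Step 2 (lower-order terms).} The conjugated operator is not exactly $e^{-2N\lambda_2 t}$ times an elliptic operator. The remainder consists of terms of order $<2N$ with worse decay. To handle them we use a semiclassical G\aa rding or elliptic estimate in the leafwise calculus. We bound $\|u\|_{H^{k}}$ for $k<2N$ by interpolating between $H^{2N}$ and $C^0\subset L^2$, with a small parameter. This produces the $C\|u\|_{C^0}$ term.

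This is the main obstacle. The leafwise calculus must be uniform in the leaf despite the foliation having only continuous transverse structure, and the constants must be uniform in $t$. Our first approach would be an iterative, or bootstrap, scheme. We would apply the estimate over a fixed time step $T$ repeatedly, writing $t=kT+s$. At each step we would use a leafwise Sobolev estimate with constants uniform in the leaf: $\|P(\chi u)\|$ would be bounded by $\theta\|u\|_{H^{2N}}$ at the previous step plus $C\|u\|_{C^0}$, with $\theta=e^{-\nu T}<1$. Summing the resulting geometric series then gives \eqref{equation:bound-main-corollary} with uniform constants.

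\textbf{Step 3 (limit set).} For $x\in\Lambda_+(x_0,\eps)$, we choose $x_n=\varphi_{t_n}y_n\to x$ with $t_n\to\infty$. By \eqref{equation:bound-main-corollary}, the restrictions $u|_{W^u_1(x_n)}$ are bounded in $H^{2N}$ by $C\|u\|_{C^0}+o(1)$. By continuity of the foliation, these balls converge to $W^u_1(x)$ in leafwise charts, and $u$ is continuous. Weak compactness in $H^{2N}$ together with lower semicontinuity of the norm then yields $u|_{W^u_1(x)}\in H^{2N}$ with the bound \eqref{equation:bound-main2-corollary}.
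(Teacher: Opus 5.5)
Your Step~2 iteration is essentially the paper's proof. Over a fixed time step $T$, Egorov plus the leafwise sharp G\aa rding inequality give the top-order contraction $C_2^{4N}e^{-4\lambda_2 N T}$. Combined with the leafwise $L^2$ growth $e^{\lambda_1 T}$ and the factor $e^{2\Re(\lambda)T}$, this yields a factor $\le 1/2$ on the $\dot H^{2N}_h$ part. The $L^2$ part is bounded by $\|u\|_{C^0}$, the geometric series is summed, and $\Lambda_+(x_0,\eps)$ is handled by a weak-limit argument as in your Step~3.

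The one thing to discard is the one-shot Step~1 with $h=e^{-\lambda_2 t}$. The Egorov/G\aa rding constants $C(t)$ are not uniform in $t$, and tying $h$ to the time step gives no control on the remainder. The paper instead fixes $T$ first and only then takes $h$ small to absorb the remainder $hC_3(T)\|\chi_{x(T)}u\|^2_{H^{2N}_h}$. This small $h$ plays exactly the role of your interpolation parameter. Interpolation alone would require the sharp $H^{2N-1/2}$ G\aa rding remainder, which the paper states but does not prove.
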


See \S\ref{ssection:sobolev-norms} for the definition of the Sobolev norms. The constant $C$ in \eqref{equation:bound-main2-corollary} is the same as the one in \eqref{equation:bound-main-corollary}. The restriction on $t \geq \nu_0 |\log(\eps)|$ in the definition of $\Omega_+(x,\eps)$ (see \eqref{equation:omegaplus}) stems from the fact that the norm on the left-hand side of \eqref{equation:bound-main} is computed on a ball of size $1$. The assumption being that $u$ is $H^{2N}$ on an $\eps$-neighborhood of an unstable leaf, one needs to propagate for a time $t \gg 1$ large enough so that this neighborhood becomes of size $1$. We emphasize that the constant $C > 0$ in \eqref{equation:bound-main-corollary} and \eqref{equation:bound-main2-corollary} is explicit (see the proof in \S\ref{ssection:proof-propagation-estimates}). In particular, it depends \emph{uniformly} on the flow, provided the vector field is perturbed in the $C^{r}$-topology for some large (explicit) $r>0$ depending on $N$.

We also prove a similar statement to Theorem \ref{corollary:main} for distributions, under an additional wavefront set condition. For this result, one needs to assume that $E \to M$ is a globally smooth vector bundle. In the following statement, $\mc{D}'_\Gamma(M,E)$ denotes the space of distributions with wavefront set contained in $\Gamma$ (see \S\ref{ssection:convention-distributions} for a brief reminder on the theory of distributions). We let $d$ be the dimension of the leaves of $\mc{F}$.

\begin{theorem}
\label{theorem:main}
Assume that $E \to M$ is a globally smooth vector bundle. Let $\Gamma \subset T^*M \setminus \{0\}$ be a closed conic subset such that
\[
\Gamma \cap N^*\mc{F} = \emptyset.
\]
Let $\lambda \in \C$, $\gamma > 0$ and let $N \geq 0$ be an integer such that
\[
N > \boldsymbol{\mu} + \Re(\lambda)/2\lambda_2.
\]
There exist constants $C := C(\X, \Gamma, N, \gamma) > 0$ and $\nu := \nu(\X,N)>0, \nu':=\nu'(\X,N)>0$ such that the following holds. Let $u \in \mc{D}'_\Gamma(M,E) \cap H^{-2N+(n-d)/2+\gamma}(M,E)$ be a section such that $e^{t\X} u = e^{\lambda t} u$, and assume there exists $x_0 \in M$, $\eps > 0$ such that 
\[
u|_{W^u(x_0)} \in H^{2N}(W^u(x_0,\eps)).
\]

Then for all $x = \varphi_t y \in \Omega_+(x_0,\eps)$ (where $y \in W^u_{\eps/2}(x)$), one has $u|_{W^{u}_1(x)} \in H^{2N}(W^u_1(x))$ and the following inequality holds: 
\begin{equation}
\label{equation:bound-main}
\|u\|_{H^{2N}(W^u_1(x))} \leq C \left(\|u\|_{H^{-2N+(n-d)/2+\gamma}(M)} + \eps^{-\nu'} e^{-\nu t} \|u\|_{H^{2N}(W^u_\eps(x_0))}\right).
\end{equation}
In addition, for all $x \in \Lambda_+(x_0,\eps)$, one has $u|_{W^{u}_1(x)} \in H^{2N}(W^u_1(x))$ and 
\begin{equation}
\label{equation:bound-main2}
\|u\|_{H^{2N}(W^u_1(x))} \leq C\|u\|_{H^{-2N+(n-d)/2+\gamma}(M)}.
\end{equation}
\end{theorem}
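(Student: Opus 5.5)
The plan is to reduce Theorem \ref{theorem:main} to a leafwise statement of the same type as Theorem \ref{corollary:main}. The first task is to make sense of restricting $u$ to leaves. The second is a positive-commutator (radial-type) energy estimate, in a leafwise semiclassical calculus, which propagates $H^{2N}$ regularity forward along the flow.

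\emph{Step 1: restriction to leaves.} Since $\Gamma\cap N^*\mc{F}=\emptyset$, the pullback of $u$ to any leaf (more precisely, to a foliated chart with plaques $W^u_1(x)$) is well defined in the sense of H\"ormander's restriction theorem. The regularity bookkeeping is where the exponent $-2N+(n-d)/2+\gamma$ comes from. The transverse dimension of the foliation is $n-d$, so restricting to a codimension-$(n-d)$ submanifold costs $(n-d)/2$ derivatives, with a $\gamma$ margin. Microlocally away from $N^*\mc{F}$, restriction loses nothing.

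From this I would obtain, uniformly in $x$ and with a constant depending on $\Gamma$, the bound
\[
\|u|_{W^u_1(x)}\|_{H^{-2N}(W^u_1(x))} \le C\|u\|_{H^{-2N+(n-d)/2+\gamma}(M)}.
\]
This quantity plays the role that $\|u\|_{C^0}$ plays in Theorem \ref{corollary:main}: a low-regularity, uniform-in-leaf control. Since $E$ is globally smooth and $\Gamma$ is closed, the family of restrictions depends continuously on the plaque. The equation $e^{t\X}u=e^{\lambda t}u$ then restricts leafwise, because the flow maps leaves to leaves.

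\emph{Step 2: leafwise propagation estimate.} Fix the leafwise semiclassical Laplacian $h^2\Delta_{\mc{F}}$ built from $\nabla^E$, and work with the leafwise operator $A=(1+\Delta_{\mc F})^N$. Write
\[
\|e^{-t\X}v\|_{H^{2N}(W^u_1(\varphi_t y))}
\]
as a conjugated quantity $\langle A\, e^{-t\X}v, e^{-t\X}v\rangle$. Along the cotangent directions of the leaves, covectors are contracted at rate at least $\lambda_2$ under the pushforward, so the principal symbol of $A$ decays like $e^{-2N\lambda_2 t}$. Meanwhile the $L^2$ norm on leaves grows at most like $e^{\lambda_1 t/2}$, and the eigenvalue factor contributes $e^{-\Re\lambda\, t}$.

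The threshold \eqref{equation:regularity-threshold} is exactly what makes the resulting exponent negative. That gives the $e^{-\nu t}$ in \eqref{equation:bound-main}. The $\eps^{-\nu'}$ comes from rescaling the ball $W^u_\eps(x_0)$ to unit size over time $\nu_0|\log\eps|$.

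Lower-order terms are controlled by an interpolation/Gronwall argument in the leafwise calculus. Their contribution is bounded by the low norm from Step 1, which accounts for the constant term. Gluing with a leafwise partition of unity on $\varphi_t(W^u_\eps(x_0))\supset W^u_1(x)$ then yields \eqref{equation:bound-main}.

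\emph{Step 3: the limit set.} For $x\in\Lambda_+(x_0,\eps)$, pick $x_k=\varphi_{t_k}y_k\to x$ with $t_k\to\infty$. The bound \eqref{equation:bound-main} gives uniform $H^{2N}$ bounds on $u|_{W^u_1(x_k)}$ with the second term tending to $0$. Plaques $W^u_1(x_k)$ converge to $W^u_1(x)$ in the foliated chart, and the restrictions converge distributionally by the continuity in Step 1. Weak compactness then gives \eqref{equation:bound-main2} with the same $C$.

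The main obstacle is Step 2. One needs a genuine leafwise pseudodifferential calculus with only continuous transverse dependence, and commutator estimates that are uniform across leaves, so that the constants do not depend on the leaf. I expect I would instead deduce Step 2 directly from the leafwise estimate underlying Theorem \ref{corollary:main}, applied to the leafwise restrictions with $C^0$ replaced by $H^{-2N}$ on plaques. The bulk of the new work is then checking that this replacement is harmless.
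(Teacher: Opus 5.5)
Your route is the paper's. You bound the restrictions of $u$ to plaques in $H^{-2N}$ (Lemma \ref{lemma:restriction}), feed this low norm into the leafwise Egorov/sharp Gårding contraction of Lemma \ref{lemma:marcelle} in place of $\|u\|_{C^0}$, iterate along the orbit, and pass to the limit on $\Lambda_+(x_0,\eps)$ by continuity of leafwise pairings.

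\textbf{Step 1 is false as stated.} The bound $\|u|_{W^u_1(x)}\|_{H^{-2N}}\le C\|u\|_{H^{-2N+(n-d)/2+\gamma}(M)}$, with nothing else on the right, does not hold, and the heuristic behind it is reversed. Write $\xi_1$ for conormal and $\xi_2$ for leafwise frequencies.
\begin{itemize}
\item The $(n-d)/2+\gamma$ loss occurs on the cone $\{|\xi_1|\le\eta|\xi_2|\}$ away from $N^*\mc{F}$, where $\Gamma$ lives; it comes from Cauchy--Schwarz in $\xi_1$.
\item On a conic neighbourhood of $N^*\mc{F}$ you need rapid decay of $\widehat{u}$. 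That is measured by a $\Gamma$-seminorm, not by any Sobolev norm.
\end{itemize}
This is why Lemma \ref{lemma:restriction} contains $\|u\|_{\Gamma,N,n}$, and the paper's proof carries $\|u\|_{\Gamma,N}$ to the end.

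The term cannot be dropped. Take an Anosov flow $\phi_t$ on a closed $3$-manifold $M_1$, $M=M_1\times S^1$, $\varphi_t=\phi_t\times\mathrm{id}$, leaves $W^u_\phi(p)\times\{\theta\}$, $E$ trivial, $\lambda=0$, and $u_k=e^{ik\theta}$. Then $\X u_k=0$ and $\WF(u_k)=\emptyset$. For $\gamma<1/2$ the exponent $s=-2N+3/2+\gamma$ is negative, so $\|u_k\|_{H^s(M)}\to0$. Yet $u_k|_{W^u_1(x)}$ is a unimodular constant, whose $H^{\pm2N}(W^u_1(x))$ norms are bounded below. So neither your Step 1 nor \eqref{equation:bound-main}--\eqref{equation:bound-main2} as displayed can hold. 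What your argument, like the paper's, actually proves has $\|u\|_{H^{-2N+(n-d)/2+\gamma}(M)}+\|u\|_{\Gamma,N}$ in place of the Sobolev norm.

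\textbf{Two smaller corrections to Step 2.}
\begin{itemize}
\item Since $u=e^{\lambda t}e^{-t\X}u$, the eigenvalue factor is $e^{+\Re(\lambda)t}$, not $e^{-\Re(\lambda)t}$. With your sign the exponent is negative under $N>\boldsymbol{\mu}-\Re(\lambda)/2\lambda_2$, not under \eqref{equation:regularity-threshold}.
\item Only the homogeneous part $h^{2N}\boldsymbol{\Delta}^N$ contracts. The Egorov/Gårding remainder in Lemma \ref{lemma:key} is $hC(t)\|\cdot\|^2_{H^{2N}_h}$, with $C(t)$ controlled only for $t$ in compact sets. So a single conjugation over arbitrarily long times does not close, with or without interpolation.
\end{itemize}
The paper instead fixes $T$ so that $C_1C_2^{4N}e^{(2\Re\lambda+\lambda_1-4\lambda_2N)T}$ is small, then takes $h$ small to absorb the remainder, and iterates in steps of $T$. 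The low norm enters only through the $L^2$ part of $\overline{H}^{2N}_h$, via \eqref{equation:fourche}:
\[
\|\chi_xu\|^2_{L^2}\le\tfrac12\|\chi_xu\|^2_{H^{-2N}_h}+\tfrac12\|\chi_xu\|^2_{H^{2N}_h},
\]
with the last term absorbed. That is exactly the ``replacement of $C^0$ by $H^{-2N}$'' you anticipate in your last paragraph. Once Step 1 carries the seminorm, your plan coincides with the paper's.
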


Both results have similar statements and similar proofs. However, since the assumptions are somewhat different, we state them separately. The latter result is better suited to the study of Pollicott–Ruelle resonances. One could envision a unified statement by imposing weaker assumptions on transverse regularity than the wavefront set condition. We leave such extensions to the interested reader, as we are not aware of any potential applications of a stronger statement.

\subsection{Global smoothness}

We say that the foliation $\mc{F}$ is \emph{minimal} if each leaf of $\mc{F}$ is dense in $M$. As an immediate application of Theorem \ref{theorem:main}, we obtain a resulting asserting that a section satisfying a transport equation and a wavefront set condition is smooth provided it is smooth on a small piece of a leaf of $\mc{F}$: 

\begin{corollary}
\label{corollary:utile}
Assume that $E \to M$ is a globally smooth vector bundle and that the foliation is $\mc{F}$ is minimal. Let $\Gamma \subset T^*M \setminus \{0\}$ be a closed conic subset such that
\[
\Gamma \cap N^*\mc{F} = \emptyset.
\]
Let $\lambda \in \C$, $u \in \mc{D}'_\Gamma(M,E)$ be a section such that $e^{t\X} u = e^{\lambda t} u$. Assume that
\[
u|_{W^u(x)} \in C^\infty(W^u_{\eps}(x),E)
\]
for some $x \in M$, $\eps > 0$. Then $u \in C^\infty_{\mathrm{leaf}}(M,E)$. In addition, if $\mc{F}$ is absolutely continuous, then $u \in C^\infty(M,E)$.
\end{corollary}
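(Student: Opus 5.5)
The plan is to apply Theorem \ref{theorem:main} for every integer $N$, and then use minimality to pass from a flow-out set to the whole manifold.

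\textbf{Step 1: choosing the parameters.} A distribution on a closed manifold has finite Sobolev order, so $u \in H^{-s}(M,E)$ for some $s \geq 0$. Fix an arbitrary integer $N$ with
\[
N > \boldsymbol{\mu} + \Re(\lambda)/2\lambda_2
\quad\text{and}\quad
-2N + (n-d)/2 + \gamma \leq -s,
\]
for instance with $\gamma = 1$. Then $u \in \mc{D}'_\Gamma(M,E) \cap H^{-2N+(n-d)/2+\gamma}(M,E)$. Since $u|_{W^u(x)}$ is smooth on $W^u_\eps(x)$, it lies in $H^{2N}(W^u_\eps(x))$. All hypotheses of Theorem \ref{theorem:main} therefore hold.

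\textbf{Step 2: uniform bounds on all unit balls.} Theorem \ref{theorem:main} gives the uniform bound \eqref{equation:bound-main2} on $W^u_1(y)$ for every $y \in \Lambda_+(x,\eps)$. The set $\Lambda_+(x,\eps)$ is the $\omega$-limit set of the piece $W^u_{\eps/2}(x)$. It is closed, nonempty and $\varphi_t$-invariant.

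I claim it is saturated by $\mc{F}$. If $z = \lim \varphi_{t_k} y_k$ with $y_k \in W^u_{\eps/2}(x)$ and $t_k \to \infty$, then $\varphi_{t_k}(W^u_{\eps/2}(x))$ contains $W^u_R(\varphi_{t_k} y_k)$ for any fixed $R$ once $k$ is large. This uses uniform expansion, possibly after shrinking $\eps$ and replacing $y_k$ by points at distance at most $\eps/4$ from $x$. By continuity of the foliation, $W^u_R(z) \subset \Lambda_+(x,\eps)$. So $\Lambda_+(x,\eps)$ contains the whole leaf $W^u(z)$, and by minimality that leaf is dense, hence $\Lambda_+(x,\eps) = M$.

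Consequently, for every $y \in M$,
\[
\|u\|_{H^{2N}(W^u_1(y))} \leq C\|u\|_{H^{-2N+(n-d)/2+\gamma}(M)},
\]
with $C$ independent of $y$.

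\textbf{Step 3: leafwise smoothness.} Since $N$ was arbitrary, $u$ restricted to every leaf is smooth, with uniform bounds in every leafwise $C^k$ norm by Sobolev embedding on unit leaf balls. To conclude $u \in C^\infty_{\leaf}(M,E)$, I also need continuity of $u$ and of its leafwise derivatives transversally.

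The step I expect to be the main obstacle is making sense of $u$ pointwise as a continuous section, since $u$ is a priori only a distribution. I would handle it in two stages:
\begin{itemize}
\item First, combine the wavefront condition $\Gamma \cap N^*\mc{F} = \emptyset$ with the uniform leafwise Sobolev bounds. Microlocally away from $N^*\mc{F}$, leafwise derivatives control the full symbol, so $u$ has high regularity on the conic region $\Gamma^c$ near $N^*\mc{F}$ complement, while on $\Gamma$ the leafwise ellipticity gives high regularity. This should show that $u$ is in fact in $H^{k}(M)$ for every $k$ near any point, hence $u \in C^\infty(M,E)$.
\item If that argument only works leafwise, then fall back on the following: the restrictions to leaves are defined via the wavefront condition (restriction is legitimate since $\Gamma$ avoids the conormal of the leaves), and they depend continuously on the base point. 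Together with the uniform bounds and Arzelà–Ascoli, this gives continuity of all leafwise derivatives.
\end{itemize}

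\textbf{Step 4: absolutely continuous foliations.} When $\mc{F}$ is absolutely continuous, I would use the leafwise elliptic operator $1+\Delta_{\mc{F}}^N$ together with the transport equation and microlocal ellipticity. On $\Gamma$, which is disjoint from $N^*\mc{F}$, the leafwise Laplacian is elliptic, so smoothness there follows from leafwise smoothness with uniform bounds. Absolute continuity lets one integrate the leafwise $H^{2N}$ bounds over leaves to obtain $\|\Op(a)u\|_{H^{2N}}$ bounds for symbols $a$ supported near $\Gamma$. Off $\Gamma$, $u$ is already smooth by the definition of the wavefront set. Combining the two gives $u \in C^\infty(M,E)$.
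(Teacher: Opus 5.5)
Your outline is the paper's argument. You apply Theorem \ref{theorem:main} for all large $N$, use minimality to get $\Lambda_+ = M$, read off uniform leafwise Sobolev bounds on all unit leaf balls, and, under absolute continuity, conclude $\WF(u)\subset\Gamma\cap N^*\mc{F}=\emptyset$. Where you deviate, it is in details, and mostly for the better:
\begin{itemize}
\item You verify $u\in H^{-2N+(n-d)/2+\gamma}$ from the finite order of $u$; the paper leaves this implicit.
\item You show $\Lambda_+(x,\eps)=M$ by proving it is closed and contains a full leaf (it suffices to take $y_k=x$). The paper instead uses the uniform $\eps$-density radius of Lemma \ref{lemma:matin}. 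Both work.
\item You correctly flag that transverse continuity of $u$ and of its leafwise derivatives must be checked, a point the paper passes over. Your second bullet in Step 3 is the right way to do it: weak continuity of $x\mapsto(\chi_x\mathbf{r}_{L(x)}u,\varphi\mu_x)$ from Lemma \ref{lemma:continuity-leafwise-integration}, upgraded to $C^k$ continuity by the uniform $C^{k+1}$ bounds and Arzel\`a--Ascoli.
\end{itemize}

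Two things should be fixed.

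First, drop the first bullet of Step 3. It would give $u\in C^\infty(M,E)$ without absolute continuity, and the argument cannot work there. Converting leafwise Sobolev bounds into ambient ones requires disintegrating $\vol$ along leaves with leafwise smooth densities, i.e.\ \eqref{equation:utile} and \eqref{equation:fubini}. This is why the corollary only asserts $C^\infty_{\leaf}$ in general.

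Second, in Step 4 the claim ``$\boldsymbol{\Delta}_{\mc{F}}$ is elliptic on $\Gamma$'' cannot be used as a microlocal elliptic estimate with $\Op(a)$. The leafwise Laplacian has coefficients that are only continuous transversally, so it is not a pseudodifferential operator on $M$. The correct tool, which the paper simply cites, is Lemma \ref{lemma:wavefront-leafwise}. Work in a smooth chart and admissible coordinates with $\Cchart^*\vol=J\,\dd x_1\,\dd x_2$, $J$ leafwise smooth. Integrate by parts in $x_2$ in $\widehat{\chi u}(\xi)=\int e^{-i\xi\cdot\Cchart(x_1,x_2)}(\chi u)(\Cchart(x_1,x_2))J\,\dd x_2\,\dd x_1$. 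The phase has $x_2$-gradient $\partial_{x_2}\Cchart^{\top}\xi$, which is bounded below by $c|\xi|$ on closed cones avoiding $N^*\mc{F}$. This gives rapid decay there, hence $\WF(u)\subset N^*\mc{F}$.

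This argument presupposes that $u$ equals the continuous section built from its leafwise restrictions. Under absolute continuity, that identification is exactly what \eqref{equation:fubini} provides, and you should invoke it explicitly. Without absolute continuity the identification is not automatic; the paper does not address this either.
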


See Definition \ref{definition:foliation} for the notion of absolute continuity. This result applies to partially hyperbolic flows with minimal strong unstable foliation; many examples of such flows are known. One can also obtain results without this minimality assumption, but in that case the conclusion yields smoothness only on a minimal subset of the foliation. Note that a similar statement can be obtained by applying Theorem \ref{corollary:main} to obtain propagation of smoothness under less transverse regularity to get leafwise regularity.

\subsection{Application to Pollicott-Ruelle resonant states} We now further assume that $\varphi_t : M \to M$ is a smooth Anosov flow with continuous splitting
 \[
 TM = E_c \oplus E_s \oplus E_u, \qquad E_c := \R X, \qquad X := \partial_t \varphi_t|_{t = 0}.
 \]
In this case, we let $\mc{F} := W^u$ be the unstable foliation. Generalized Pollicott-Ruelle resonant states are distributions $u \in \mc{D}'_{E_s^\perp}(M,E)$ with wavefront set in the subbundle $E_s^\perp \subset T^*M$ defined by $E_s^\perp(E_c \oplus E_s) = 0$, solutions to the transport equation $(\X-\lambda)^\ell u = 0$, where $\lambda \in \C$ and $\ell \geq 1$. When $\ell=1$, $u$ is called a Pollicott-Ruelle resonant state. These distributions appear naturally in the study of decay of correlations for flows, see \S\ref{ssection:resonances} for a background discussion.

Due to the condition on their wavefront set, these distributions have a well-defined restriction to the strong unstable leaves $u|_{W^u(x)}$ for any $x \in M$. Similarly to Corollary \ref{corollary:utile}, the following result follows from Theorem \ref{theorem:main}:

 \begin{corollary}
 \label{corollary:smoothness} 
Assume that $E \to M$ is a globally smooth vector bundle and $\varphi_t : M \to M$ is transitive. Let $u \in \mc{D}'_{E_s^\perp}(M,E)$ be a Pollicott-Ruelle resonant state associated with the resonance $\lambda \in \C$. Suppose that there exist $x \in M$ and $V \subset W^u(x)$, an open subset of a strong unstable leaf, such that $u|_V \in C^\infty(V)$. Then $u \in C^\infty(M,E)$.  \end{corollary}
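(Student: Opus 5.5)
The plan is to reduce Corollary \ref{corollary:smoothness} to Theorem \ref{theorem:main} via Corollary \ref{corollary:utile}, with some extra care because the strong unstable foliation need not be minimal for a transitive Anosov flow. Set $\mc{F} := W^u$, whose leaves are uniformly expanded, and take $\Gamma := E_s^\perp \setminus \{0\}$. Since $N^*W^u = (E_u)^\perp$ and $E_s^\perp \cap E_u^\perp = (E_s \oplus E_u)^\perp$ is contained in the annihilator of $E_c \oplus E_s \oplus E_u = TM$, which is zero, we get $\Gamma \cap N^*\mc{F} = \emptyset$. A resonant state $u$ lies in some anisotropic space, and in particular $u \in H^{-s}(M,E)$ for some $s$. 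Choosing $N$ large, we have both $-2N + (n-d)/2 + \gamma \leq -s$ and $N > \boldsymbol{\mu} + \Re(\lambda)/2\lambda_2$, so $u$ satisfies the hypotheses of Theorem \ref{theorem:main}. Moreover, $u|_V \in H^{2N}(W^u_\eps(x))$ for every $N$ after shrinking $V$ to a ball $W^u_\eps(x)$.

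Next we propagate. Theorem \ref{theorem:main} gives $u|_{W^u_1(z)} \in H^{2N}$, with a bound uniform in $z$, for all $z \in \Lambda_+(x,\eps)$. It also gives the bound \eqref{equation:bound-main} on $\Omega_+(x,\eps)$. Since $N$ is arbitrary, $u$ is leafwise smooth along the closure of the forward flow-out of $W^u_{\eps/2}(x)$, with uniform $H^{2N}$ bounds depending only on $\|u\|_{H^{-s}}$.

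The key dynamical step is to show that $\Lambda_+(x,\eps) = M$. For a transitive Anosov flow, the set $\bigcup_{t\ge T}\varphi_t(W^u_{\eps/2}(x))$ is dense for every $T$. This follows from the local product structure: pick a point $y$ with dense forward orbit close to $x$; then $W^{s}_{\mathrm{loc}}$ of a point of $W^u_{\eps/2}(x)$, taken together with a small time shift, meets $W^{cs}(y)$, and forward iterates of that point shadow the dense orbit of $y$. Equivalently, one can invoke the density of the weak unstable leaves and the mixing/transitivity dichotomy. This step is the main obstacle. Strong unstable leaves need not be dense (suspensions), so we cannot simply quote Corollary \ref{corollary:utile}; we use the flow-out instead, and the $\varphi_t$-saturation is exactly what $\Lambda_+$ captures. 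With $\Lambda_+(x,\eps) = M$, we obtain $u|_{W^u_1(z)} \in H^{2N}$ uniformly for every $z \in M$ and every $N$. By Sobolev embedding along leaves, this gives $u \in C^\infty_{\mathrm{leaf}}(M,E)$, with all leafwise derivatives bounded, and in particular leafwise $C^k$ bounds for every $k$.

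Finally we upgrade to global smoothness. Microlocally, $\WF(u) \subset E_s^\perp$. Uniform smoothness along $W^u$ means that $u$ is microlocally $H^{2N}$ in directions where the symbol is elliptic in the $E_u^*$ components; $E_s^\perp$ is contained in that region away from $E_u^\perp$, and $E_s^\perp \cap E_u^\perp = 0$. To make this rigorous, we apply the leafwise Sobolev estimate, for instance by testing against $\Op(a)$ with $a$ supported in a conic neighbourhood of $E_s^\perp$ and using the fact that the $W^u$ foliation is absolutely continuous for Anosov flows; this gives $u \in H^{2N}$ microlocally near $E_s^\perp$. Equivalently, we invoke the final clause of Corollary \ref{corollary:utile}, whose argument only uses the leafwise regularity we have established everywhere together with absolute continuity. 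Combined with $\WF(u) \subset E_s^\perp$, this yields $\WF(u) = \emptyset$, hence $u \in C^\infty(M,E)$.
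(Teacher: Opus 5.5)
Your argument is correct, and its architecture matches the paper's proof, which simply reruns the proof of Corollary~\ref{corollary:utile} with $\Gamma=E_s^\perp$ and $\mc{F}=W^u$. Both go: Theorem~\ref{theorem:main} for all large $N$, then $\Lambda_+=M$, then leafwise smoothness, then absolute continuity and Lemma~\ref{lemma:wavefront-leafwise} give $\WF(u)\subset E_s^\perp\cap N^*W^u=\{0\}$.

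The genuine difference is the dynamical step. The paper obtains $\Lambda_+(V)=M$ (Lemma~\ref{lemma:fino}) from Plante's dichotomy. Either all strong unstable leaves are dense, and Lemma~\ref{lemma:matin} gives a uniform $R$ with every leafwise $R$-ball $\eps$-dense, so expansion finishes. Or the flow is a constant-roof suspension, treated slice by slice. You instead pick $y$ near $x$ with dense forward orbit (such points are residual for a transitive flow). Local product structure then gives $z\in W^u_{\eps/2}(x)\cap W^s_{\mathrm{loc}}(\varphi_\tau y)$. Since $d(\varphi_t z,\varphi_{t+\tau}y)\to 0$ and $\{\varphi_t y\}_{t\ge T}$ is dense for every $T$, you get density of $\bigcup_{t\ge T}\varphi_t(W^u_{\eps/2}(x))$ for every $T$.

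Your route is more elementary and self-contained. It needs no classification theorem and no case split; the suspension case is harmless because you never need density of a single strong unstable leaf. It also applies verbatim to any flow with a dense forward orbit and a local product structure between $W^u$ and a contracted center-stable lamination. What the paper's route buys, in the minimal case, is a stronger uniform fact: each single $\varphi_t(V)$ is $\eps$-dense for $t$ large. That is not needed here.

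Two cosmetic remarks:
\begin{itemize}
\item Your aside invoking density of weak unstable leaves is not literally equivalent to what you need, since it concerns whole leaves and all times. Your shadowing argument does not rely on it.
\item In the paper's notation, $N^*W^u$ is the annihilator of $E_u$ alone, namely $E_0^\perp\oplus E_u^\perp$, not $E_u^\perp$. Your conclusion $E_s^\perp\cap N^*W^u=\{0\}$ still holds, because $E_s^\perp$ annihilates $E_0\oplus E_s$.
\end{itemize}
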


Corollary \ref{corollary:smoothness} does not follow formally from Corollary \ref{corollary:utile} applied with $\mc{F}=W^u$ (the strong unstable foliation) as the flow may be transitive without $\mc{F}$ being minimal; however, this phenomenon can only happen for suspensions with a constant roof function.

It is unlikely that Corollary \ref{corollary:smoothness} holds for generalized resonant states. If $(\X-\lambda)u^\ell = 0$ for $\ell \geq 2$, one further needs to assume that $\X^k u|_{V}$ is smooth for some $V \subset W^u(x)$ for all $0 \leq k \leq \ell -1$. Then, combining Theorem \ref{theorem:main} with Duhamel's formula (see the proof of \cite[Lemma 7.11]{Zworski-12} for instance), it should follow that $u \in C^\infty(M,E)$
 
In addition, our method also allows to improve a result of Weich \cite{Weich-17} who showed that generalized Pollicott-Ruelle resonant states of Anosov flows have full support in $M$ under the assumption the flow is transitive (that is, if they vanish on an open set of $M$, they are identically zero). We establish that for (non-generalized) resonant states, it suffices that they vanish on a piece of unstable leaf to deduce that they vanish everywhere.

\begin{corollary}
\label{corollary:support}
Assume that $E \to M$ is a globally smooth vector bundle and that $\varphi_t : M \to M$ is transitive. Let $u$ be a Pollicott-Ruelle resonant state whose restriction $u|_V$ to an open subset $V \subset W^u(x)$ of a strong unstable leaf vanishes. Then $u \equiv 0$. In particular, Pollicott-Ruelle resonant states have full support.
\end{corollary}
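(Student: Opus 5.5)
The plan is to apply the $\varepsilon\to 0$ limit of Theorem~\ref{theorem:main} to the resonant state $u$, arguing that vanishing on $V$ yields arbitrarily strong estimates. Since $u$ is a Pollicott--Ruelle resonant state, $u\in\mc{D}'_{E_s^\perp}(M,E)$ and $(\X-\lambda)u=0$, hence $e^{t\X}u=e^{\lambda t}u$. Moreover $E_s^\perp\cap N^*W^u=E_s^\perp\cap (E_c\oplus E_u)^\perp=\{0\}$, so we may take $\Gamma=E_s^\perp$ in Theorem~\ref{theorem:main}. A resonant state lies in some anisotropic space, and by the wavefront set condition it belongs to $H^{-s}(M,E)$ for some $s$. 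We then fix $N$ large enough that both $N>\boldsymbol{\mu}+\Re(\lambda)/2\lambda_2$ and $-2N+(n-d)/2+\gamma\le -s$ hold, so that $u\in H^{-2N+(n-d)/2+\gamma}$.

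Next we pick $x_0\in V$ and $\eps>0$ with $W^u_\eps(x_0)\subset V$. Then $u|_{W^u_\eps(x_0)}=0$, so in particular it lies in $H^{2N}$ with norm $0$. The key observation is that the equation is linear, so we may apply \eqref{equation:bound-main} to the section $u$ itself and exploit the vanishing of the second term, giving
\[
\|u\|_{H^{2N}(W^u_1(x))}\le C\|u\|_{H^{-2N+(n-d)/2+\gamma}}
\]
on $\Omega_+(x_0,\eps)$. By itself this bound does not force vanishing. The route I would follow instead is to prove that $u$ vanishes on the leaves $W^u_1(x)$ for every $x\in\Omega_+(x_0,\eps)$, using the structure of the proof rather than the bound alone. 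Precisely: $u|_{W^u(\varphi_t y)}$ equals, up to the factor $e^{\lambda t}$, the cocycle pushforward of $u|_{W^u(y)}$. Hence for $y\in W^u_{\eps/2}(x_0)$ and $t$ large, $u$ vanishes on $\varphi_t(W^u_\eps(x_0))\supset W^u_1(\varphi_t y)$. This step is direct from invariance and needs no estimate: restriction to leaves commutes with the flow for distributions with wavefront set in $E_s^\perp$.

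Finally we would pass to the closure. Every point of $\Lambda_+(x_0,\eps)$ is a limit of such points $x$, and restriction to leaves depends continuously on the base point in the relevant topology, using uniformity of the leafwise Sobolev bounds from \eqref{equation:bound-main2} together with a weak compactness argument. It follows that $u$ vanishes on $W^u_1(x)$ for all $x\in\Lambda_+(x_0,\eps)$. By transitivity, and after adjusting along the flow direction using $X u=\lambda u$ so as to obtain vanishing on the weak unstable set, the union of these leaves, saturated by the flow, is dense. Then $u$ vanishes as a distribution on an open set: a distribution with wavefront set in $E_s^\perp$ whose restrictions to all unstable leaves through an open set vanish is itself zero there. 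Weich's full support result \cite{Weich-17} then gives $u\equiv0$. We could avoid citing it by instead producing a dense open region directly and then invoking continuity in the $\mc{D}'$ topology.

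The main obstacle is this last part. One must show that vanishing on a dense family of unstable plaques forces vanishing of $u$ on an open set, and this requires the transverse continuity of leaf restrictions, which is delicate because $W^u$ is only Hölder transversally. I would handle this by pairing $u$ with test functions and disintegrating along unstable plaques in a foliated chart, using the wavefront condition to justify the restriction map as continuous $\mc{D}'_{E_s^\perp}\to$ leafwise distributions, uniformly in the plaque.
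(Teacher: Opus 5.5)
Your argument is correct in outline and follows the paper's skeleton. Vanishing on $V$ propagates to the flow-out $\bigcup_{t\ge0}\varphi_t(V)$ via $e^{t\X}u=e^{\lambda t}u$. It then passes to the unit plaques through points of the limit set $\Lambda_+$ by transverse continuity of leaf restrictions (Lemma \ref{lemma:continuity-leafwise-integration}). Finally, Fubini \eqref{equation:fubini} turns vanishing on all plaques of a chart into vanishing of $u$ there.

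You differ from the paper in how transitivity enters. The paper invokes Lemma \ref{lemma:fino}, i.e.\ $\Lambda_+(V)=M$ via Plante's dichotomy, so $u$ vanishes on every unit plaque and Fubini gives $u\equiv0$ at once. You instead use the classical density of weak unstable leaves of a transitive Anosov flow. This is a legitimate and somewhat more elementary input, since it avoids treating constant-roof suspensions separately. However, you do not exploit it fully. The set of $x$ with $u|_{W^u_1(x)}=0$ is closed by Lemma \ref{lemma:continuity-leafwise-integration}. Since $\Lambda_+$ is invariant under $\varphi_s$ for all $s\in\R$ and the flow expands $W^u$, this set contains the whole weak unstable leaf of any point of $\Lambda_+$. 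Density of that leaf therefore makes the set all of $M$, and Fubini in each chart finishes the proof.

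Consequently the final appeal to Weich is unnecessary, and you would also have to check that his theorem covers general cocycles on $E$ rather than scalar potentials. Your proposed alternative, ``a dense open region plus continuity in $\mc{D}'$'', does not work as stated: a distribution can vanish on a dense open set without being zero (e.g.\ a Dirac mass). What is needed is closedness of the set of plaques on which $u$ vanishes, not density of an open region.

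Likewise, the detour through \eqref{equation:bound-main}, and the weak-compactness argument with \eqref{equation:bound-main2}, play no role. Uniform Sobolev bounds do not by themselves identify the weak limit with the restriction at the limit point, and that identification is exactly what the continuity lemma provides.

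There is also a minor slip: $N^*W^u$ is the annihilator of $E_u$ alone, not of $E_c\oplus E_u$. The transversality $E_s^\perp\cap N^*W^u=\{0\}$ still holds.
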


Similarly to Corollary \ref{corollary:smoothness}, this should also hold for generalized resonant states under stronger assumptions (namely $\X^k u|_{V} \equiv 0$ for all $0 \leq k \leq \ell-1$). We also emphasize that our method of proof allows to recover easily the result of Weich \cite{Weich-17}: a generalized resonant state vanishes on an open subset if and only if it is identically zero. 

Finally, let us point out that a similar result should also hold for $h$-dependent quasimodes as $h \to 0$ (that is, solutions to the high-frequency problem $(\X-\lambda_h) u_h = \mc{O}(h^\ell)$ where $\Im(\lambda_h)=h^{-1}$). The study of such quasimodes is a key step in the proof of exponential mixing for Anosov flows, see \cite{Cekic-Guillarmou-21} or \cite[Chapter 4]{Cekic-Lefeuvre-24}. We believe that the approach developed in the present paper (showing that the restriction of quasimodes to unstable leaves is not smooth) could help. To some extent, this is reminiscent of the ideas developed in \cite{Leclerc-25} showing Fourier decay of equilibrium states of volume-preserving Axiom A diffeomorphisms on surfaces under the assumption that the stable bundle is not $C^2$.

\subsection{Non joint integrability}

One important motivation for this work is to extend measure rigidity results in dimension $3$ to higher dimensions (see \cite{ALOS,Katz,EskinPotrieZhang}). In these articles, a strategy introduced in \cite{tsujii.expvol,tsujii-zhang} is used to prove similar results on propagation of regularity that rely very strongly on normal forms; our motivation for the present article is to be able to avoid the normal forms as they are not continuous in higher dimensions. In a forthcoming work with Elliot Smith, we expect to use her thesis work \cite{ElliotSmith} together with Theorem \ref{corollary:main} to deduce measure rigidity statements via \cite{BrownEskinFilipRHertz}.

\subsection{Leafwise semiclassical pseudodifferential calculus} The main tool for establishing Theorem \ref{theorem:main} is the introduction of an appropriate algebra of semiclassical pseudodifferential operators adapted to the foliation. We believe that this calculus is of independent interest and could be used for other purposes. Let us briefly explain how it works. Let $\mc{F}$ be a foliation of $M$; we further assume that the leaves are smooth and vary continuously in transverse directions (see Definition \ref{definition:foliation}). The space $C^\infty_{\mathrm{leaf}}(M)$ denotes the space of continuous functions over $M$ whose restriction to each leaf $L \in \mc{F}$ of the foliation is uniformly smooth (that is the restriction to each leaf is smooth, and for all $k \geq 0$, the $C^k$-norm on the leaf varies continuously in transverse directions), see \S\ref{sssection:smooth-functions}.

The algebra $\Psi^\bullet_{h,\mathrm{leaf}}(M,\mc{F})$ is an algebra of operators $\mathbf{A} : C^\infty_{\mathrm{leaf}}(M) \to C^\infty_{\mathrm{leaf}}(M)$ with the property that they restrict to each leaf $L \in \mc{F}$ and induce an operator $\mathbf{A}_L : C^\infty_{\comp}(L) \to C^\infty_{\comp}(L)$ on compactly supported smooth functions on the leaf $L$ which is a semiclassical operator in the usual sense \cite{Zworski-12}. To avoid issues due to the noncompactness of the leaves, we further impose that the kernel of $\mathbf{A}_L$ is supported in a small open subset of the diagonal, and that the size of this off-diagonal support is uniform with respect to all leaves $L \in \mc{F}$. We then establish all the standard results available in the theory of semiclassical pseudodifferential operators for this new class of operators such as: the existence of a principal symbol, the algebra property, a Calderon-Vaillancourt-type theorem, Egorov's theorem, the sharp Gårding inequality, etc.

\subsection{Comparison with other work} The estimate \eqref{equation:bound-main} stated in Theorem \ref{theorem:main} is reminiscent of radial source estimates in the context of hyperbolic dynamics, see \cite{Dyatlov-Zworski-16,Bonthonneau-Lefeuvre-23}. These estimates are microlocal in the sense that they are stated using pseudodifferential operators microlocalized near $E_s^\perp$ in $T^*M$; however, they also exhibit a threshold condition, similar to \eqref{equation:regularity-threshold}. They can be used in place of the classical Journé lemma \cite{Journe-86} to bootstrap the regularity of solutions to cohomological equations, see \cite{Bonthonneau-Lefeuvre-23} for further discussion. Our approach is also reminiscent of the one initiated in \cite{Rauch-Taylor-05} aiming at generalizing the Journé lemma using microlocal analysis. See also \cite[Section 8]{Fisher-Kalinin-Spatzier-13} for a similar discussion.

The idea of considering a foliated pseudodifferential calculus is not new and already appears across the literature, see \cite{Connes-82,Androulidakis-Skandalis-10, Kordyukov-91,Kordyukov-02} among other references. However, to the best of our knowledge, it seems to be new in the context of (partially) hyperbolic dynamics. We believe that it could have other applications as well.

\subsection{Organization of the paper}  The paper is organized as follows:
\begin{itemize}
\item In \S\ref{section:leafwise-calculus}, we discuss the notion of transversally continuous foliations with smooth leaves, introduce the leafwise semiclassical pseudodifferential calculus, and prove its main properties.
\item In \S\ref{section:preliminaries}, we discuss several analytic properties in the context of a foliated manifold: the growth of propagators for flows preserving the foliation, the Fubini theorem for distributions, etc. We also recall the notion of Pollicott-Ruelle resonant states.
\item The main results (Theorems \ref{corollary:main} abd \ref{theorem:main}) are then proved in \S\ref{section:proofs}. We also prove the statements on the global smoothness and Pollicott-Ruelle resonant states in \S\ref{ssection:pr}.
\end{itemize}

\subsection{Conventions for distributions} \label{ssection:convention-distributions}
Let $\Omega^1 M \to M$ be the density bundle over $M$. Note that $\Omega^1M$ is isomorphic to the vector bundle of volume forms $\Lambda^n T^*M$ if $M$ is orientable. The space $\mc{D}'(M)$ of distributions on $M$ is the topological dual of $C^\infty(M,\Omega^1M)$, the space of smooth densities (that is, sections of this bundle). Note that if one fixes a volume form $\omega$ in $M$ one gets a natural identification of $C^\infty(M,\Omega^1M)$ with $C^\infty(M)$. There is a natural embedding $C^\infty(M) \hookrightarrow \mc{D}'(M)$ given by
\[
(u,\varphi) := \int_M u(x) \varphi(x), \qquad \forall \varphi \in C^\infty(M,\Omega^1M). 
\]
That is $\mc{D}'(M)$ should be thought of the \emph{generalized functions} on $M$.

Given $u \in \mc{D}'(M)$, its wavefront set $\WF(u) \subset T^*M \setminus \{0\}$ is defined as the \emph{complement} of the set of points $(x_0,\xi_0)$ such that there exists an open conic neighborhood $V_{\xi_0} \subset T_{x_0}^*M$ of $\xi_0$ and a function $\chi \in C^\infty(M)$ with $\chi(x_0) \neq 0$, such that for all $S \in C^\infty(M)$ with $dS(x_0) \in V_{\xi_0}$ one has that $(u, e^{-iS/h}) = \mc{O}(h^\infty)$ (i.e. it decays faster than any polynomial in $h$ when $h \to 0$). We refer to \cite[Chapter 4]{Lefeuvre-book} for details. The wavefront set is always a closed conic subset of $T^*M \setminus \{0\}$. 

Given a closed conic subset $\Gamma \subset T^*M \setminus \{0\}$, we let $\mc{D}'_{\Gamma}(M)$ denote the space of distributions $u$ such that $\WF(u) \subset \Gamma$. This space can be equipped with a continuous family of seminorms, see \cite[Remark 4.1.10]{Lefeuvre-book} for instance.\\

\noindent\textbf{Acknowledgement:} This project was supported by the European Research Council (ERC) under the European Union’s Horizon 2020 research and innovation programme (Grant agreement no. 101162990 — ADG) and finished while the second author was in residence in SLMath, Berkeley. R.P. was partially supported by CSIC . We thank S. Crovisier, A. Eskin, R. Elliot Smith, S. Filip, D. Fisher, S. Mu\~noz-Thon, T. Weich and Z. Zhang for comments and discussions.

\section{Leafwise pseudodifferential operators}\label{section:leafwise-calculus}

Our aim is to introduce families of pseudodifferential operators on a transversally continuous foliation $\mc{F}$ with smooth leaves. Since the leaves are non-compact, special care is required when manipulating these operators. This motivates the introduction of a \emph{leafwise uniform} semiclassical pseudodifferential calculus. Roughly speaking, an operator $\mathbf{A} \in \Psi^m_{h,\mathrm{leaf}}(M,\mc{F})$ belongs to this calculus if $\mathbf{A}$ induces on each leaf $L \in \mc{F}$ a (smooth) semiclassical pseudodifferential operator, and $\mathbf{A}$ depends continuously on the leaf $L(x)$ as $x$ varies transversally to it. We assume the reader is familiar with the standard semiclassical and microlocal calculus in $\R^n$ and on manifolds, see \cite{Zworski-12, Lefeuvre-book} for references. 

The section is organized as follows:
\begin{itemize}
\item In \S\ref{ssection:assumptions}, we define the transversally continuous foliations with smooth leaves on which the calculus is defined;
\item In \S\ref{ssection:rn}, we briefly detail the leafwise semiclassical calculus in $\R^n$ and discuss the change of coordinates;
\item In \S\ref{ssection:calculus-manifold}, we define the leafwise calculus on closed manifolds, and detail its main properties;
\item Finally, we introduce the leafwise Sobolev spaces in \S\ref{ssection:sobolev-norms}.
\end{itemize}

\subsection{Foliations}

\label{ssection:assumptions}

We first introduce the class of foliations $\mc{F}$ of $M$ we will be working with. The reader should bear in mind the strong unstable foliation of a partially hyperbolic flow as an example. Recall that $n=\dim(M)$ and $d$ is the dimension of the leaves of $\mc{F}$.

\subsubsection{Definition}\label{ss.deffoliations} Our aim is to introduce a class of foliations with smooth leaves, such that the leaves vary ``continuously'' transversally to the foliation. This is the purpose of the following definition, mainly inspired by \cite{HirschPughShub,DeLaLlave-01} (see the notion of $C^{0,\infty+}$ foliations in \cite{Candel-Conlon-00,Candel-Conlon-03}):

\begin{definition}[Transversally continuous foliation with smooth leaves] \label{definition:foliation}
A \emph{transversally continuous foliation with smooth leaves} is a partition of
\[
M = \bigsqcup_{L \in \mc{F}} L,
\]
into smooth $d$-dimensional complete leaves $L$ (where $\mc{F}$ is called the \emph{space of leaves}) such that the following holds. For all $p \in M$, the exists a small neighborhood $U \subset M$ of $p$ and a continuous map 
\begin{equation}
\label{equation:admissible-coordinates}
\Cchart : V_{x_1} \times V_{x_2} \to U,
\end{equation}
where $V_{x_1} \subset \R^{n-d}, V_{x_2} \subset \R^{d}$ are connected open subsets, such that:
\begin{itemize}
\item For all $x_1 \in V_{x_1}$, $\{\Cchart(x_1,x_2) ~:~ x_2 \in V_{x_2}\} = L \cap U$ is the connected component of a leaf $L$ in $U$;
\item For all $\alpha \in \Z_{\geq 0}^{d}$, $\partial^\alpha_{x_2} \Cchart$ is continuous in $V_{x_1} \times V_{x_2}$.
\end{itemize}
In addition, the foliation is \emph{absolutely continuous} if:
\begin{itemize}
\item There exists a continuous function $J \in C^0(V_{x_1} \times V_{x_2})$ such that for all measurable subsets $E_{x_1} \subset V_{x_1}, E_{x_2} \subset V_{x_2}$:
\begin{equation}
\label{equation:utile}
\vol(\Cchart(E_{x_1} \times E_{x_2})) = \int_{E_{x_1} \times E_{x_2}} J(x_1,x_2) \dd x_1 \dd x_2;
\end{equation}
\item For all $\alpha  \in \Z_{\geq 0}^{d}$, $\partial^\alpha_{x_2} J$ is continuous on $V_{x_1} \times V_{x_2}$.
\end{itemize}
\end{definition}

Here, $\vol$ denotes the Riemannian volume on $M$ induced by the arbitrary background metric $g$. A similar definition can be given in finite regularity (that is with $C^r$-regular leaves and/or higher transversal regularity), see the discussion around \cite[Theorem 2]{DeLaLlave-01}). The coordinate mappings $\Cchart$ satisfying the above properties as in \eqref{equation:admissible-coordinates} are called \emph{admissible} coordinates (or admissible maps or charts). It is a non-trivial theorem that the unstable (or the stable) foliation of a partially hyperbolic flow is a transversally continuous foliation with smooth leaves; in addition, it is also absolutely continuous—see \cite[Theorem 2]{DeLaLlave-01} for the Anosov case and the discussion in \cite[\S 2.2]{burns-wilkinson} for the partially hyperbolic case.

 We will use the generic letter $L$ for leaves of $\mc{F}$. Later, when the leaves will be assumed to be preserved and expanded by a flow $\varphi_t : M \to M$, we will denote them by $W^u$.

\subsubsection{Leafwise diffeomorphisms} \label{sssection:leafwise-diffeo}

Suppose that $(M',\mc{F}')$ and $(M,\mc{F})$ are two smooth manifolds (not necessarily closed) equipped with transversally continuous foliations with smooth leaves (Definition \ref{definition:foliation}).

\begin{definition}[Leafwise diffeomorphisms]
\label{definition:leafwise-diffeo}
A homeomorphism $\kappa : M' \to M$ is a \emph{leafwise diffeomorphism} if:
\begin{itemize}
\item For all leaves $L' \in \mc{F}'$, there exists a leaf $L \in \mc{F}$ such that
\[
\kappa : L' \to L
\]
is a smooth diffeomorphism;
\item For all admissible charts $\Cchart, \Cchart'$ on $M$ and $M'$ respectively (see \eqref{equation:admissible-coordinates}), for all $\alpha \in \Z^{d}_{\geq 0}$, the homeomorphism
\[
\kappa_{\Cchart,\Cchart'} : (\Cchart')^{-1}(\kappa^{-1}(U) \cap U')  \to \Cchart^{-1}(U \cap \kappa(U')), \qquad \kappa_{\Cchart,\Cchart'} := \Cchart^{-1} \circ \kappa \circ \Cchart',
\]
satisfies that $\partial^\alpha_{x_2}\kappa_{\Cchart,\Cchart'}$ is continuous on $(\Cchart')^{-1}(\kappa^{-1}(U) \cap U') \subset V'_{x_1} \times V'_{x_2}$. 
\end{itemize}
\end{definition}

Notice that the admissible chart $\Cchart : V_{x_1} \times V_{x_2} \to U$ defined in \eqref{equation:admissible-coordinates} is an example of a leafwise diffeomorphism in the above sense. It is straightforward to check that the homeomorphism $\kappa_{\Cchart,\Cchart'}$ is of the form
\begin{equation}
\label{equation:ecriture}
\kappa_{\Cchart,\Cchart'}(x'_1,x'_2) = (\psi(x'_1), \phi(x'_1,x'_2)),
\end{equation}
where $\psi : V'_{x_1} \to V_{x_1}$ is a homeomorphism and $\phi : V'_{x_1} \times V'_{x_2} \to V_{x_2}$ is smooth with respect to the $x'_2$-variable and continuous with respect to $x'_1$.

\subsubsection{Tangent and cotangent bundles} Let $T\mc{F} \subset TM$ be the subbundle of $TM$ defined at a point $x \in M$ by $T_x\mc{F} := T_x L$, where $L$ is the unique leaf of the foliation $\mc{F}$ passing through $x \in M$. In the admissible coordinates \eqref{equation:admissible-coordinates}, $T\mc{F}$ is spanned by the vectors $\{ \partial_{x_2}^1, ..., \partial_{x_2}^d\}$. Notice that, in general, there is no natural complement of $T\mc{F}$ in $TM$ (see Remark \ref{remark:chat} below).

The conormal bundle $N^*\mc{F} \subset T^*M$ is the subbundle of $T^*M$ defined at $x \in M$ by
\[
N^*_x \mc{F} := \{\xi \in T^*_xM ~:~ (\xi,v) = 0, \forall v \in T_xL\}.
\]
In the admissible coordinates \eqref{equation:admissible-coordinates}, writing $x_1 = (x_{1,1}, x_{1,2}, ..., x_{1,n-d})$, $N^*\mc{F}$ is spanned by $\{\dd x_{1,1}, ..., \dd x_{1,n-d}\}$. Unless further assumptions are made, there is no natural complement of $N^*\mc{F}$ in $T^*M$. The dual bundle to $T\mc{F}$ (the vector bundle of linear forms on $T\mc{F}$) is denoted by $T^*\mc{F}$. Observe that there is a tautological identification:
\begin{equation}
\label{equation:identification}
T^*\mc{F} \simeq T^*M/N^*\mc{F}, \qquad T^*M/N^*\mc{F} \ni [\xi] \mapsto (\xi,\bullet|_{T\mc{F}}) \in T^*\mc{F}.
\end{equation}
This map is well-defined, that is independent of the choice of representative $\xi$ in the class $[\xi]$ because two representatives differ by an element which vanish on $T\mc{F}$. However, we emphasize that, in general, $T^*\mc{F}$ cannot be seen as a natural subbundle of $T^*M$.

Notice that the vector bundle $T^*\mc{F}$ can be defined alternatively in the admissible coordinates \eqref{equation:admissible-coordinates} as the vector bundle whose transition matrices are given by
\begin{equation}
\label{equation:matrices}
V'_{x_1} \times V'_{x_2} \times \R^d \ni (x'_1,x'_2,\xi) \mapsto (\kappa(x'_1,x'_2), \partial_{x'_2}\phi(x'_1,x'_2)^{-\top}\xi) \in V_{x_1} \times V_{x_2} \times \R^d,
\end{equation}
where $\phi$ is defined in \eqref{equation:ecriture}. In particular, a function $a \in C^0(T^*\mc{F})$ on $T^*\mc{F}$ is equivalent to the data of a function $a_\Cchart \in C^0(V_{x_1}\times V_{x_2} \times \R^d)$ in each admissible coordinate patch \eqref{equation:admissible-coordinates} satisfying the equivariance property:
\begin{equation}
\label{equation:equivariance}
a_{\Cchart}(\kappa(x'), \partial_{x_2}\phi(x'_1,x'_2)^{-\top}\xi') = a_{\Cchart'}(x',\xi'), \qquad \forall x' \in V'_{x_1} \times V'_{x_2}, \xi \in \R^d.
\end{equation}

Finally, any leafwise diffeomorphism $\kappa : M' \to M$ induces an action $\dd\kappa : T\mc{F}' \to T\mc{F}$ given by the action of the differential on tangent vectors to the leaves. As a consequence, there is a well-defined dual action:
\[
\dd\kappa^{\top} : T^*\mc{F} \to T^*\mc{F}', \qquad (\dd\kappa^{\top}(x)\xi, v) := (\xi, \dd\kappa(x)v),
\]
where $x \in M', v \in T_x\mc{F}'$ and $\xi \in T^*_{\kappa(x)}\mc{F}$.

\begin{remark} \label{remark:chat} When $T\mc{F}$ is complemented and there exist a globally defined (continuous) subbundle $G \subset TM$ such that $TM = T\mc{F} \oplus G$, then $T^*\mc{F}$ can be naturally identified with the conormal bundle $N^*G \subset T^*M$ to $G$ (hence with a subbundle of $T^*M$) defined as
\[
N^*G := \{ \xi \in T^*M ~:~ (\xi,v) = 0, \forall v \in G\}.
\]
In particular, if $\mc{F} = W^u$ is the strong unstable foliation of an Anosov flow, then $T\mc{F} = E_u$ is naturally complemented by $G= E_0 \oplus E_s$ and the remark applies.
\end{remark}

\subsubsection{Leafwise smooth functions} \label{sssection:smooth-functions} We now discuss the notion of leafwise smooth functions (or sections) on $M$.

\begin{definition}[Leafwise smooth functions]
\label{definition:leafwise-smooth-functions}
Let $f \in C^0(M)$. We say that $f$ is \emph{leafwise smooth} if for all adapted charts $\Cchart$ as in \eqref{equation:admissible-coordinates}, for all $\alpha \in \Z_{\geq 0}^{d}$, $\partial_{x_2}^\alpha (\Cchart^*f)$ is continuous on $V_{x_1} \times V_{x_2}$. The set of leafwise smooth functions is denoted by $C^\infty_{\mathrm{leaf}}(M)$.
\end{definition}

The notion of leafwise smooth functions on $T^*\mc{F}$ is also well-defined. Namely $a \in C^\infty_{\leaf}(T^*\mc{F})$ if and only if, in each admissible coordinate system \eqref{equation:admissible-coordinates}, the corresponding function $a_{\Cchart}$ on $V_{x_1}\times V_{x_2} \times \R^d$ satisfies that for all $\alpha, \beta \in \Z_{\geq 0}^d$ the function $\partial_\xi^\beta \partial_{x_2}^\alpha a_{\Cchart}$ is continuous on $V_{x_1}\times V_{x_2} \times \R^d$. Using \eqref{equation:equivariance}, it is immediate to verify that this property is well-defined by admissible changes of coordinates.

The restriction of the metric $g$ on $M$ to each leaf of $\mc{F}$ is a metric with Levi-Civita connection $\nabla_{\mc{F}}$. It defines an operator 
\begin{equation}
\label{equation:leafwise-derivative}
\nabla_{\mc{F}} : C^\infty_{\leaf}(M) \to C^\infty_{\leaf}(M,T^*\mc{F}),
\end{equation}
such that $(\nabla_{\mc{F}})f(x ; v) := \nabla_v f (x)$ for all $f \in C_{\leaf}^\infty(M), x \in M, v \in T_x\mc{F}$. We also recall that $\nabla^k f (x) \in \mathrm{Sym}^k T^*_x\mc{F}$ is the symmetric tensor such that 
\[
\nabla^k f (x; v, ..., v) = \partial_t^k(f(\gamma(t)))|_{t = 0},
\]
where $\gamma : (-\eps, \eps) \to L(x)$ is the geodesic generated by the metric restricted to the leaf $L(x)$.

Given a leaf $L \subset M$, we denote by $C^\infty(L)$ the space of smooth uniformly bounded functions on the leaf:
\begin{equation}
\label{equation:cl}
C^\infty(L) := \left\{f : L \to \C ~:~ \forall k \geq 0, \sup_{x \in L} |\nabla_{\mc{F}}^k f(x)| < \infty\right\}.
\end{equation}
Observe that, for all leaves $L \in \mc{F}$, there is a well-defined restriction operator:
\begin{equation}
\label{equation:restriction}
\mathbf{r}_L : C^\infty_{\leaf}(M) \to C^\infty(L), \qquad \mathbf{r}_Lf := f|_L.
\end{equation}
That $\mathbf{r}_L f \in C^\infty(L)$ (in the sense of \eqref{equation:cl}) is left as a verification for the reader. We also emphasize that the map $\mathbf{r}_L$ may not be surjective (if $L$ is non-compact for instance), not even on $C^\infty_{\comp}(L)$, the space of smooth compactly supported functions on $L$.

\subsubsection{Leafwise smooth vector bundles}

\label{sssection:leafwise-smooth-bundles} Let $E \to M$ be a complex topological vector bundle over $M$ of rank $k \geq 1$.

\begin{definition}[Leafwise smooth vector bundles]
The vector bundle $E \to M$ is \emph{leafwise smooth} if for every contractible pair of neighborhoods $U_1, U_2 \subset M$, there exists a trivializing map
\[
\psi_i : U_i \times \C \to \pi^{-1}(U_i), \qquad i=1,2,
\]
such that the composite function
\[
\varphi_1^{-1} \circ \varphi_2 : (U_1 \cap U_2) \to \C^k \to (U_1 \cap U_2) \to \C^k
\]
is well-defined on the overlap and satisfies $\varphi_1^{-1} \circ \varphi_2(x,s) = (x,g(x)s)$, where $g$ is a $\mathrm{GL}_k(\C)$-valued function, and $g$ is leafwise smoth in the sense of Definition \ref{definition:leafwise-smooth-functions} (that is, all entries of the matrix are leafwise smooth).
\end{definition}

The same definition is valid by replacing $\C$ by any other field. In this paper, it will be used only for real and complex vector bundles. For instance, the strong unstable bundle $E_u \to M$ of an Anosov flow is leafwise smooth for the foliation $\mc{F} = W^u$ by strong unstable leaves.

If $E \to M$ is a leafwise smooth vector bundle over $M$, there is similarly a well-defined notion of leafwise smooth sections $f \in C^\infty_{\mathrm{leaf}}(M,E)$ with values in $E$. For that, it suffices to consider small enough admissible charts $\Cchart : V_{x_1} \times V_{x_2} \to U$ such that $E$ is trivial in $U$; it is then required that each coordinate of the section $f$ is leafwise smooth in the above sense. Note that, since admissible charts are only continuous in the $x_1$-coordinate, having that $E \to M$ is leafwise smooth means precisely that there exist such charts.

Finally, the following holds:

\begin{lemma}
\label{lemma:wavefront-leafwise}
Suppose that $\mc{F}$ is absolutely continuous. Let $f \in C^\infty_{\mathrm{leaf}}(M,E)$. Then $\WF(f) \subset N^*\mc{F}$.
\end{lemma}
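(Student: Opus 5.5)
The plan is to check the definition of the wavefront set directly in an admissible chart. We use absolute continuity to write the pairing as an iterated integral whose inner integral runs along the leaves. A non-stationary phase argument in the leaf variable $x_2$ then gives $\mc{O}(h^\infty)$ decay. Leafwise smoothness and the continuity of $J$ and its $x_2$-derivatives are exactly what make the leafwise integrations by parts uniform in the transverse variable $x_1$.

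\textbf{Reductions.} Since $E$ is leafwise smooth, I work in a small admissible chart over which $E$ is trivial. Pairing with a fixed smooth frame of $E^*$ reduces everything to a scalar $f \in C^\infty_{\leaf}(M)$. Fix $(x_0,\xi_0) \in T^*M\setminus\{0\}$ with $\xi_0 \notin N^*_{x_0}\mc{F}$, so that $\xi_0|_{T_{x_0}\mc{F}} \neq 0$. Since $T\mc{F}$ depends continuously on the base point (it is spanned by $\partial_{x_2}\Cchart$, which is continuous), there are a neighborhood $U$ of $x_0$ and a closed conic neighborhood $V$ of $\xi_0$ such that $\xi|_{T_x\mc{F}}\neq 0$ for all $x \in U$ and $\xi \in V$, with $|\xi|_{T_x\mc{F}}| \geq c|\xi|$ there. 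Here I identify covectors near $x_0$ via a fixed smooth coordinate system $y$ on $U$.

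\textbf{Choice of test phases.} Rather than handling arbitrary phases $S$ (whose differential might leave $V$ on $\supp \chi$), I use the standard equivalent characterization. It suffices to show that $(f, \chi e^{-i\langle y,\xi\rangle/h}\rho) = \mc{O}(h^\infty)$ uniformly for $\xi \in V$, $|\xi|=1$, where $\chi \in C^\infty_{\comp}(U)$ with $\chi(x_0)\neq 0$ and $\rho$ is a smooth density. The argument below also works verbatim for any $S$ with $dS(U) \subset V$, after shrinking $\supp\chi$. Writing $\rho = r\,\dd\vol$ with $r$ smooth, and using \eqref{equation:utile} (which extends from products of sets to integrable functions by the usual monotone class argument), we get
\[
(f,\chi e^{-i\langle y,\xi\rangle/h}\rho) = \int_{V_{x_1}}\int_{V_{x_2}} F(x_1,x_2)\, e^{-i\Phi_\xi(x_1,x_2)/h}\,\dd x_2\,\dd x_1,
\]
where $F := (f\chi r)\circ\Cchart \cdot J$ and $\Phi_\xi(x_1,x_2) := \langle y(\Cchart(x_1,x_2)),\xi\rangle$. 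By Definitions \ref{definition:foliation} and \ref{definition:leafwise-smooth-functions}, all $\partial_{x_2}^\alpha F$ and $\partial_{x_2}^\alpha\Phi_\xi$ are continuous. Moreover $F$ is compactly supported in $(x_1,x_2)$ (after shrinking $U$ to be relatively compact in the chart).

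\textbf{Non-stationary phase along leaves.} We have $\partial_{x_2}\Phi_\xi = \xi(\partial_{x_2}\Cchart)$, and the vectors $\partial_{x_2}\Cchart$ span $T\mc{F}$. Hence $|\partial_{x_2}\Phi_\xi| \geq c' > 0$ on $\supp F$, uniformly in $\xi \in V \cap \{|\xi|=1\}$. For each fixed $x_1$, I apply the operator $L := \tfrac{ih}{|\partial_{x_2}\Phi_\xi|^2}\,\partial_{x_2}\Phi_\xi\cdot\partial_{x_2}$, which satisfies $L e^{-i\Phi_\xi/h} = e^{-i\Phi_\xi/h}$. Integrating by parts $k$ times in $x_2$ alone (there are no boundary terms since $F(x_1,\cdot)$ has compact support) yields
\[
\Big|\int_{V_{x_2}} F\,e^{-i\Phi_\xi/h}\,\dd x_2\Big| \leq C_k h^k,
\]
where $C_k$ depends only on finitely many $x_2$-derivatives of $F$ and $\Phi_\xi$. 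These are continuous on a compact set, hence bounded uniformly in $x_1$ and $\xi$. Integrating over $x_1$ gives the $\mc{O}(h^\infty)$ bound, so $(x_0,\xi_0)\notin \WF(f)$.

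\textbf{Main obstacle.} The main obstacle is that no derivatives in $x_1$ are available. This is why every integration by parts must be purely leafwise, and why absolute continuity, with $J$ leafwise smooth, is needed to disintegrate Lebesgue measure in product form. A secondary point is uniformity of the conic neighborhood, which is handled by the continuity of $T\mc{F}$ and the choice of linear phases.
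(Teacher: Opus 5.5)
Your proof is correct, and it does more than the paper does here: the paper gives no argument for Lemma \ref{lemma:wavefront-leafwise} and only cites \cite[Lemma 1.11]{Bonthonneau-Guillarmou-Weich-24}.

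Your route is the natural direct one. It uses the same mechanism as the paper's proof of Lemma \ref{lemma:leafwise-density}, where decay of $\widehat{\chi_x\mu_x}(\xi)$ comes from integrating by parts along leafwise vector fields $Y_j$ with $\xi(Y_j)\neq 0$. You run that computation on each plaque $\{x_1=\mathrm{const}\}$ and then integrate over the transversal.

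You correctly identify what this last step needs. Lebesgue measure must disintegrate into leafwise-smooth densities $J(x_1,\cdot)\,\dd x_2$, and this is exactly what absolute continuity, with $\partial^\alpha_{x_2}J$ continuous, provides. The integration-by-parts constants then involve only finitely many $x_2$-derivatives of $f$, $J$, $\chi$, $r$ and $\Cchart$ on a compact set. Together with a positive lower bound on $|\partial_{x_2}\Phi_\xi|$, obtained from continuity and compactness, this makes the bounds uniform in $x_1$ and in $\xi$ on the unit sphere of the cone.

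What your version buys over the citation is transparency about where each hypothesis enters. It also gives a quantitative by-product: the decay of $\chi f$ in cones avoiding $N^*\mc{F}$ is controlled by leafwise $C^k$-norms of $f$.

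Using linear phases $\langle y,\xi\rangle$ uniformly over a closed cone is the right way to check the definition. The phases must have differential in the cone on all of $\supp\chi$, not merely at $x_0$.

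One wording point: the wavefront set of a section of $E$ only makes sense when $E$ is a smooth bundle, which is the setting of Corollary \ref{corollary:utile} where the lemma is used. So the reduction should read ``since $E$ is smooth, pair with a smooth local frame of $E^*$.'' The resulting components are leafwise smooth because, by the chain rule, a smooth function composed with $\Cchart$ has continuous $x_2$-derivatives.
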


\begin{proof}
See \cite[Lemma 1.11]{Bonthonneau-Guillarmou-Weich-24}. 
\end{proof}

\subsubsection{Leafwise volume forms}

\label{sssection:leafwise-volume}

 The background metric $g$ can be restricted to each leaf $L \in \mc{F}$ and defines an induced metric $g_L$ with smooth Riemannian density $\vol_L$. Given $x \in M$, and $y \in L(x)$, we introduce the notation $\mu_x(y) := \vol_{L(x)}(y)$; note that $\mu_x \in C^\infty(L(x), \Omega^1 L(x))$. In general, this cannot be seen as a distribution over $M$ due to the noncompactness of the leaves. However, letting $\chi_x \in C^\infty_{\comp}(L(x))$ be the function $\chi_x(y) := \chi(d_L(x,y))$, where $\chi \in C^\infty_{\comp}(\R_+)$ is a smooth bump function equal to $1$ on $[0,1)$ and $0$ outside of $[0,2)$, and $d_L$ denotes the distance induced by $g_L$ on $L(x)$, we can see $\chi_x \mu_x \in \mc{D}'(M,\Omega^1 M)$ as a distributional density on $M$ (that is the dual of $C^\infty(M)$) defined by
\begin{equation}
\label{equation:mux}
(\chi_x \mu_x, \varphi) := \int_{L(x)} \varphi(y) \chi_x(y) \dd\mu_x(y).
\end{equation}
The following holds:

\begin{lemma}
\label{lemma:leafwise-density}
For all $x \in M$, the wavefront set of $\chi_x \mu_x$ is contained in $N^*\mc{F}$, that is $\WF(\chi_x \mu_x) \subset N^*\mc{F}$. Additionally, the map
\[
M \to \mc{D}'_{N^*\mc{F}}(M,\Omega^1 M), \qquad x \mapsto \chi_x \mu_x
\]
is continuous.
\end{lemma}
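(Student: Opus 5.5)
The wavefront set is local, so I work in an admissible chart $\Cchart : V_{x_1}\times V_{x_2}\to U$ around a point $p$, and test $\chi_x\mu_x$ against oscillatory functions. By partition of unity, the part of $\chi_x\mu_x$ supported in $U$ is a finite sum of pieces of the form
\[
(\chi_x\mu_x,\varphi) = \int_{V_{x_2}} \varphi(\Cchart(a,x_2))\, \rho(x_2)\, \dd x_2 ,
\]
where $a\in V_{x_1}$ labels the plaque of $L(x)$ in $U$. Here $\rho$ is a smooth compactly supported function in $x_2$: it is the cutoff $\chi_x$ times the leafwise Riemannian density written in the $x_2$-coordinates. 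Since $\chi_x\mu_x$ has compact support in the leaf, only finitely many plaques appear. Each plaque $x_2\mapsto \Cchart(a,x_2)$ is a smooth embedded $d$-dimensional submanifold $P$ of $M$. This is the key point: $\Cchart$ is smooth in $x_2$ and restricts to a smooth leaf. It uses only the smoothness of the leaves, not transverse regularity.

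Each piece is then a smooth density supported on the smooth submanifold $P$. By the standard computation (e.g. in \cite[Chapter 4]{Lefeuvre-book}), its wavefront set is contained in the conormal bundle $N^*P$.

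To do this computation directly, pick $(x_0,\xi_0)$ with $\xi_0\notin N^*_{x_0}P$, and take $S$ with $\dd S(x_0)$ in a small cone around $\xi_0$. Then $x_2\mapsto S(\Cchart(a,x_2))$ has nonvanishing gradient near the relevant point, namely $\dd S|_{TP}\neq 0$, uniformly on the cone after shrinking the cutoff. Repeated integration by parts in $x_2$ then gives $\mc O(h^\infty)$.

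Since $N^*P = N^*\mc F|_P$, this gives $\WF(\chi_x\mu_x)\subset N^*\mc F$.

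For continuity into $\mc{D}'_{N^*\mc F}$, I use the seminorms of \cite[Remark 4.1.10]{Lefeuvre-book}. These are the weak-$*$ seminorms $|(u,\varphi)|$ together with $\sup_{h}h^{-k}|(u,\chi e^{-iS/h})|$, taken over $\chi,S$ with $\dd S\neq 0$ on $\supp\chi$ and $\dd S$ avoiding $N^*\mc F$ there.

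As $x'\to x$, the plaques of $L(x')$ near the support are given by $a'\to a$ in the chart. The cutoffs $\chi_{x'}$ depend on the leafwise distance $d_{L(x')}(x',\cdot)$, and the densities $\rho_{a'}$ depend on $\partial_{x_2}\Cchart(a',\cdot)$. All $x_2$-derivatives of these quantities converge uniformly as $a'\to a$, because $\partial^\alpha_{x_2}\Cchart$ is jointly continuous.

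The main obstacle is that the integration by parts constants must be uniform in $a'$. I plan to handle this as follows. Since $\dd S$ avoids the closed cone $N^*\mc F$ on $\supp\chi$, compactness and continuity of $T\mc F$ give a uniform lower bound $|\dd S|_{T\mc F}|\geq c>0$ on $\supp \chi$. Hence $|\partial_{x_2}(S\circ\Cchart)(a',x_2)|\geq c'$ uniformly. All $x_2$-derivatives of $S\circ\Cchart$ and of the amplitude are bounded uniformly by the joint continuity above. So the integration by parts bounds $\mc O(h^{k})$ hold uniformly in $a'$ near $a$.

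Combining this with dominated convergence for each fixed $h$, and with the uniform tail bound in $h$, gives convergence in every seminorm. The one subtle point is the continuity of $x'\mapsto \chi_{x'}\circ\Cchart(a',\cdot)$ in $C^\infty$ in $x_2$. For this I will use that the leafwise distance function, and hence the smoothness of $\chi(d_L)$ away from the center, varies continuously with the leaf. To sidestep nonsmoothness of $d_L$ at cut loci, I can instead replace $\chi_x$ by a smoothed cutoff; since $\chi$ is constant near $0$, the only issue is injectivity radius, which is uniformly bounded below by compactness of $M$ and continuity of the leafwise metric.
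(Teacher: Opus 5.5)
Your proposal is correct and follows essentially the paper's route. The wavefront bound is the standard conormal computation for a smooth density carried by the finitely many plaques of $L(x)$, and continuity into $\mc{D}'_{N^*\mc{F}}(M,\Omega^1 M)$ comes from leafwise integration by parts with constants uniform in the base point; the paper phrases the latter via Fourier transforms in a smooth chart and leafwise vector fields $Y_j$.

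Your version is in fact slightly more careful than the paper's. You use a uniform lower bound on $|\dd S|_{T\mc{F}}|$ instead of requiring $\xi(Y_j)\neq 0$ for every $j$. You also flag that $\chi_x$ is smooth only when the leafwise injectivity radius exceeds the support of $\chi$; this is better arranged by rescaling $g$ than by changing the cutoff.
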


\begin{proof}
The claim on the wavefront set is immediate, see \cite[Example 4.3.7]{Lefeuvre-book} for instance. In a patch of admissible coordinates $\Theta$, writing $x=(x_1,x_2)$, one has for $\varphi$ a smooth function with compact support in the chart:
\begin{equation}
\label{equation:pen}
(\chi_x \mu_x, \varphi) = \int_{\R^d} \varphi(x_1,y_2) \chi(d_{L(x_1)}(x_2,y_2)) \alpha(x_1,y_2) \dd y_2,
\end{equation}
where $\alpha \in C^0(V_{x_1}, C^\infty(V_{x_2}))$ is smooth with respect to the second variable, and $\mu_x = \mu_{x_1} = \alpha \cdot \dd y_2$. (Indeed, the regularity on $\alpha$ follows from the fact that the metric $g_L$ in smooth in restriction to each leaf $L$, and transversally continuous.) If $x_n \to x$, then one sees from \eqref{equation:pen} that $(\chi_{x_n} \mu_{x_n}, \varphi) \to (\chi_x \mu_x, \varphi)$, which proves that $M \to \mc{D}'(M,\Omega^1 M)$, $x \mapsto \chi_x \mu_x$ is continuous.

Finally, the continuity as a map $M \to \mc{D}'_{N^*\mc{F}}(M,\Omega^1 M)$ follows by considering seminorms on $\mc{D}'_{N^*\mc{F}}(M,\Omega^1 M)$ (see \cite[Definition 4.1.8]{Lefeuvre-book} and \cite[Remark 4.1.10]{Lefeuvre-book} for a definition of those). Indeed, pick $x_\star \in M$ and a local \emph{smooth} chart $\Theta : U \to V \subset \R^n$ around $x_\star$. Let $\pi : T^*V \simeq V \times \R^n_\xi \to \R^n_\xi$ be the projection onto the second factor. Let $C \subset \R^n_\xi$ be a closed cone not intersecting $\pi(N^*\mc{F})$ on $V$. Then for all $\xi \in C$, one has:
\begin{equation}
\label{equation:ipp}
\widehat{\chi_x \mu_x}(\xi) = (\chi_x \mu_x,  e^{-i \xi \cdot \bullet}) =   \int_{L_x} e^{-i \xi \cdot y} \chi(d_{L_x}(x,y)) \dd \vol_{L(x)}(y).
\end{equation}
Let $Y_1, ..., Y_d$ be an independent family of leafwise smooth vector fields tangent to the foliation $\mc{F}$ on $V$. Since $\xi \in C$, $\xi \neq 0$, one has $\xi(Y_j) \neq 0$ for all $j \in \{1,...,N\}$. Hence, using
\[
e^{-i \xi \cdot y} = i \xi(Y_j(y))^{-1} Y_j e^{-i \xi \cdot y} = L_j(\xi) e^{-i \xi \cdot y} = L_j(\xi)^N e^{-i \xi \cdot y}
\]
for $N \geq 1$ arbitrary in \eqref{equation:ipp}. Integrating by parts in \eqref{equation:ipp}, we obtain that for all $N \geq 0$, there exists a constant $C_N > 0$ such that for all $x$ near $x_\star$, $|\widehat{\chi_x \mu_x}(\xi)| \leq C_N \langle\xi\rangle^{-N}$. This proves the claim.
\end{proof}

\subsection{Leafwise calculus in $\R^n$}

\label{ssection:rn}

We now introduce a leafwise semiclassical calculus in $\R^n$. Let $V_{x_1} \subset \R^{n-d}, V_{x_2} \subset \R^d$ be two open connected subsets. We use the coordinates $(x_1,x_2,\xi_2)$ to denote a generic point in $V_{x_1} \times V_{x_2} \times \R^d$.

\subsubsection{Definition} The symbol class $S^m_{h,\mathrm{leaf}}(V_{x_1} \times V_{x_2} \times \R^d)$ is defined as the set of all continuous $h$-dependent functions $a \in C^0(V_{x_1} \times V_{x_2} \times \R^d)$ satisfying the following estimates: for all compact subset $K \subset V_{x_1} \times V_{x_2}$, for all $\alpha, \beta \in \Z^d_{\geq 0}$, there exist constants $C,h_0 > 0$ such that 
\begin{equation}
\label{equation:bounds-symbol}
|\partial^\beta_{x_2} \partial^\alpha_{\xi} a(h;x_1,x_2,\xi)| \leq C\langle\xi \rangle^{m-|\alpha|}, \qquad \forall h \in (0,h_0) , (x_1,x_2) \in K, \xi \in \R^d.
\end{equation}
Observe that this class is preserved by the action of leafwise diffeomorphisms (see Definition \ref{definition:leafwise-diffeo})
\[
\kappa : V'_{x_1} \times V'_{x_2} \to V_{x_1} \times V_{x_2}.
\]
Namely if $a \in S^m_{h,\leaf}(V_{x_1}\times V_{x_2}\times \R^d)$, using that $\kappa$ has the form \eqref{equation:ecriture}, we find that the symbol (here $x'=(x'_1,x'_2)$)
\begin{equation}
\label{equation:pullback-symbol}
\widetilde{\kappa}^*a(h;x',\xi') := a(h; \kappa(x'), \partial_{x'_2} \phi^{-\top}(x'_1,x'_2)\xi')
\end{equation}
satisfies $\widetilde{\kappa}^*a \in S^m_{h,\leaf}(V'_{x_1}\times V'_{x_2}\times \R^d)$. Here $\widetilde{\kappa}$ denotes the induced action on covectors; for simplicity, we will drop the $\widetilde{\bullet}$ in the notation from now on.

The leafwise quantization of $a$ is then defined as
\begin{equation}
\label{equation:leafwise-rn}
\Op_{h,\mathrm{leaf}}^{\R^n}(a)f(x_1,x_2) = \dfrac{1}{(2\pi h)^d} \int_{V_{x_2}} \int_{\R^d_{\xi}} e^{\tfrac{i}{h}\xi\cdot(x_2-y_2)} a(x_1,x_2,\xi) f(x_1,y_2) \dd y_2 \dd \xi,
\end{equation}
where $f \in C^\infty_{\comp}(V_{x_1} \times V_{x_2})$. Notice that on each leaf $\{x_1=\mathrm{cst}\}$, the operator $\Op_{h,\mathrm{leaf}}(a)$ restricts to a usual semiclassical pseudodifferential operator with full symbol $a(x_1,\bullet,\bullet)$.

Verifying that the leafwise quantization \eqref{equation:leafwise-rn} in $\R^n$ satisfies the usual properties of a semiclassical calculus such as composition, existence of a pseudodifferential adjoint, uniform boundedness on $L^2$ of operators of order $0$, etc, is exactly the same as in the standard case. Indeed, \eqref{equation:leafwise-rn} is merely a parametrized version (with continuous parameter $x_1 \in V_{x_1}$) of the usual quantization in $\R^d$ in the $x_2$-variable. See \cite{Zworski-12} or \cite[Appendix E]{Dyatlov-Zworski-19} for further details on the standard semiclassical calculus. 

\subsubsection{Pullback by an admissible diffeomorphism} We now discuss the pullback of a leafwise pseudodifferential operator by a leafwise diffeomorphism. Let $\kappa : V'_{x_1} \times V'_{x_2} \to V_{x_1} \times V_{x_2}$ be a leafwise diffeomorphism; it is of the form:
\[
\kappa(x'_1,x_2) = (\psi(x'_1), \phi(x'_1,x'_2)).
\]
Let $\mathbf{A}' = \Op^{\R^n}_{h,\leaf}(a')$ be a leafwise pseudodifferential operator on $V'_{x_1} \times V'_{x_2}$ such that $a' \in S^m_{h,\leaf}$ and $a'(h;\bullet,\bullet,\xi)$ is (uniformly in $\xi \in \R^d$) supported in $V'_{x_1} \times V'_{x_2}$, and set:
\[
\mathbf{A} := (\kappa^{-1})^* \circ \mathbf{A}' \circ \kappa^*.
\]
We claim that the following holds:

\begin{lemma}
\label{lemma:egorov-rn}
 Then, the operator $\mathbf{A}$ is of the form $\mathbf{A} = \Op^{\R^n}_{h,\leaf}(a)$ where
\[
a(h;x,\xi) = a'(h;\kappa^{-1}(x), \partial_{x'_2}\phi^{\top}(x')\xi) + \mc{O}_{S^{m-1}_{h,\leaf}}(h),
\]
and $x' = \kappa^{-1}(x)$.
\end{lemma}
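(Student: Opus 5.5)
The plan is to reduce to the standard semiclassical change-of-variables formula, applied leaf by leaf with $x_1$ as a continuous parameter, and then check that every estimate in that proof is uniform in $x_1$.

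\emph{Reduction to a single leaf.} Since $\kappa(x'_1,x'_2)=(\psi(x'_1),\phi(x'_1,x'_2))$, both $\kappa^*$ and $\mathbf{A}'$ map the leaf $\{x'_1=\mathrm{cst}\}$ to the leaf $\{x_1=\psi(x'_1)\}$. For fixed $x_1$, set $x'_1=\psi^{-1}(x_1)$. The restriction of $\mathbf{A}$ to the leaf $\{x_1\}\times V_{x_2}$ is then
\[
(\phi_{x'_1}^{-1})^*\circ \Op_h(a'(x'_1,\bullet,\bullet))\circ \phi_{x'_1}^*,
\]
where $\phi_{x'_1}:=\phi(x'_1,\bullet)$ is a smooth diffeomorphism of open subsets of $\R^d$.

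\emph{Standard change of variables on each leaf.} Next I apply the usual semiclassical change-of-variables theorem in $\R^d$ (\cite[Theorem 9.3]{Zworski-12}, or \cite[Appendix E]{Dyatlov-Zworski-19}) to each such leaf operator. Its proof goes as follows. Write the Schwartz kernel as
\[
(2\pi h)^{-d}\int e^{\frac{i}{h}\xi'\cdot(\phi^{-1}(x_2)-\phi^{-1}(y_2))}\, a'(\phi^{-1}(x_2),\xi')\, |\det \partial\phi^{-1}(y_2)|\, \dd\xi'.
\]
Use the Kuranishi trick $\phi^{-1}(x_2)-\phi^{-1}(y_2)=\Phi(x_2,y_2)(x_2-y_2)$, with $\Phi$ invertible near the diagonal. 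Change variables $\xi=\Phi^{\top}\xi'$. Off the diagonal, the kernel is $\mc{O}(h^\infty)$ after a cutoff, by non-stationary phase. The resulting operator has an $(x_2,y_2,\xi)$-amplitude, which is reduced to a left symbol by the standard formula
\[
a(x,\xi)\sim \sum_\alpha \frac{(-ih)^{|\alpha|}}{\alpha!}\partial_\xi^\alpha\partial_{y_2}^\alpha b|_{y_2=x_2}.
\]
At $y_2=x_2$ one has $\Phi=\partial\phi^{-1}(x_2)=(\partial_{x'_2}\phi)^{-1}$, so the leading term is $a'(\kappa^{-1}(x),\partial_{x'_2}\phi^{\top}\xi)$: the Jacobian factors cancel at the diagonal. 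The remaining terms lie in $hS^{m-1}$.

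\emph{Uniformity in $x_1$ (the main point).} Every quantity in the construction is built from finitely many $x'_2$-derivatives of $\phi$, from $x_2$-derivatives of $a'$, and from $\psi^{-1}$. By Definition~\ref{definition:leafwise-diffeo} and the symbol bounds \eqref{equation:bounds-symbol}, all of these are continuous in $x_1$. On compact sets they are therefore uniformly bounded, and the lower bound on $\det\partial_{x'_2}\phi$ is also uniform. It follows that the constants in the stationary and non-stationary phase estimates, and in the Borel summation and remainder bounds, are uniform over $x_1$ in compacts. This gives:
\begin{itemize}
\item the symbol $a$ satisfies \eqref{equation:bounds-symbol}, and each term is continuous in $x_1$, since it is an integral of continuous integrands with dominated convergence;
\item the remainder is $\mc{O}(h^\infty)$ in the leafwise smoothing sense.
\end{itemize}
The compact support assumption on $a'$ is what keeps the cutoffs uniform. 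The only real obstacle is bookkeeping the uniformity of the non-stationary phase remainders. I would handle it by carrying out the standard proof verbatim with the parameter $x_1$ and checking each constant.
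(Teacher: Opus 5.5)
Your proposal is correct and is essentially the paper's argument. The paper's proof just says to run the standard semiclassical change-of-variables proof (Lefeuvre's book, Lemma 5.2.7, or Dyatlov--Zworski, Proposition E.10) with $x_1$ as a continuous parameter; your leaf-by-leaf Kuranishi computation, the leading term $a'(\kappa^{-1}(x),\partial_{x'_2}\phi^{\top}(x')\xi)$, and the uniformity check in $x_1$ supply exactly the details it leaves implicit. One small point remains: the statement asserts the exact form $\mathbf{A}=\Op^{\R^n}_{h,\leaf}(a)$, whereas your argument ends with equality modulo a leafwise smoothing $\mc{O}(h^\infty)$ remainder, so you should say that this remainder can be absorbed into the symbol (its full symbol, obtained by Fourier transforming its kernel in $x_2-y_2$, lies in $h^\infty S^{-\infty}$ uniformly in $x_1$).
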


Here, $\mc{O}_{S^{m-1}_{h,\leaf}}(h)$ means that the error is of the form $h b$ where $b \in S^{m-1}_{h,\leaf}$.

\begin{proof}
The proof is a straightforward adaptation of the standard argument for semiclassical pseudodifferential operators depending on the variable \( x_2 \in \mathbb{R}^d \) (see for instance \cite[Lemma 5.2.7]{Lefeuvre-book} in the case $h=1$, or \cite[Proposition E.10]{Dyatlov-Zworski-19}), now extended to the case where the operators also depend on the additional parameter \( x_1 \in \mathbb{R}^{n-d} \). 
\end{proof}

\subsection{Leafwise calculus on manifolds}

\label{ssection:calculus-manifold}

Throughout this section, $\mc{F}$ is a transversally continuous foliation on $M$ with smooth leaves, i.e., satisfying the assumptions of \S\ref{ssection:assumptions}. Unless explicitly mentioned, it does not need to be absolutely continuous.

\subsubsection{Definition}  We first introduce the notion of residual operators in the leafwise calculus:

\begin{definition}[Residual operators]
\label{definition:residual}
Let
\[
\mathbf{R} : C^\infty_{\mathrm{leaf}}(M) \to C^\infty_{\mathrm{leaf}}(M).
\]
be an $h$-dependent family of operators. It is called \emph{residual} if:
\begin{itemize}
\item The operator $\mathbf{R}$ can be written as
\begin{equation}
\label{equation:formula-r}
\mathbf{R} f(x) = \int_{L(x)} K_h(x,y) f(y) \dd \vol(y),
\end{equation}
for all $x \in M$, $f \in C^\infty(M)$, where $K_h(x,\bullet) \in C^\infty_{\comp}(L(x))$ is a smooth function with compact support in $\supp(K_h(x,\bullet)) \subset \{ y \in L(x) ~:~ d_{L(x)}(x,y) \leq C \}$, $C > 0$ is a uniform constant (independent of $x \in M$ and $h \in (0,h_0)$) and $d_{L(x)}$ denotes the induced Riemannian distance on the leaf.

\item In addition, the following boundedness properties hold. For all $k, k' \geq 0$, 
\[
\sup_{x \in M} \sup_{y \in L(x)} |\nabla^{k'}_{x, \mc{F}} \nabla^k_{y, \mc{F}} K_h(x,y)| = \mc{O}(h^\infty).
\]
\end{itemize}
\end{definition}

We recall that the notation $\mc{O}(h^\infty)$ indicates that the term is bounded by $\leq C_N h^N$ for all $N \geq 0$ and small enough $h$. The operator $\nabla^k_{y, \mc{F}}$ denotes the leafwise derivative defined in \eqref{equation:leafwise-derivative} acting only on the $y$-variable, $x$ being fixed. 

The first property implies that $\mathbf{R}$ restricts to an operator $\mathbf{R}_L$ on each leaf $L$ which is properly supported. The second property implies that $\mathbf{R}$ is a leafwise smoothing operator with norm $\mc{O}(h^\infty)$ when acting on any reasonable functional space. This comment will be made more precise in \S\ref{ssection:sobolev-norms}. 

We can now introduce the class of leafwise $h$-semiclassical pseudodifferential operators relative to $\mc{F}$:

\begin{definition}[Leafwise uniform semiclassical pseudodifferential operators]
\label{definition:leafwise-pseudo}
Let $m \in \R$. An operator
\[
\mathbf{A} : C^\infty_{\mathrm{leaf}}(M) \to C^\infty_{\mathrm{leaf}}(M)
\]
belongs to the class $\Psi^m_{h,\mathrm{leaf}}(M,\mc{F})$ of leafwise semiclassical pseudodifferential operator of order $m$ if the following holds:
\begin{itemize}
\item For all admissible charts $\Cchart$ (as in \eqref{equation:admissible-coordinates}), for all $\chi,\chi' \in C^\infty_{\comp}(U)$, the operator
\[
\Cchart^*\circ \chi \mathbf{A} \chi' \circ (\Cchart^{-1})^* : C^\infty_{\mathrm{leaf}}(V_{x_1} \times V_{x_2}) \to C^\infty_{\mathrm{leaf}}(V_{x_1} \times V_{x_2})
\]
is a leafwise semiclassical pseudodifferential operator on $V_{x_1} \times V_{x_2}$ in the sense of \S\ref{ssection:rn}, that is $\Cchart^*\circ \chi \mathbf{A} \chi' \circ (\Cchart^{-1})^* = \Op_{h,\mathrm{leaf}}^{\R^n}(a)$ for some symbol $a \in S^m_{h,\mathrm{leaf}}(V_{x_1} \times  V_{x_2} \times \R^d)$;
\item For all $\chi, \chi' \in C^\infty(M)$ such that $\supp(\chi) \cap \supp(\chi') = \emptyset$, $\chi \mathbf{A} \chi'$ is residual in the sense of Definition \ref{definition:residual}.
\end{itemize}
\end{definition}

The previous definition should be compared with \cite[Proposition E.13]{Dyatlov-Zworski-19} (standard definition of semiclassical pseudodifferential operators without any foliations involved). Recall that $\mathbf{r}_L$ denotes the restriction operator to leaf $L \in \mc{F}$, see \eqref{equation:restriction}. The following holds:

\begin{lemma}[Induced operator on the leaves]
\label{lemma:induced-operator}
Let $\mathbf{A} \in \Psi^m_{h,\mathrm{leaf}}(M,\mc{F})$. Then for all $L \in \mc{F}$, there exists a properly supported operator $\mathbf{A}_L : C^\infty(L) \to C^\infty(L)$ such that
\[
\mathbf{A}_L \circ \mathbf{r}_L = \mathbf{r}_L \circ \mathbf{A}.
\]
\end{lemma}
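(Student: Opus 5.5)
The plan is to define $\mathbf{A}_L$ directly through a Schwartz kernel on $L$ assembled from the local pieces, and then check the intertwining relation $\mathbf{A}_L \circ \mathbf{r}_L = \mathbf{r}_L \circ \mathbf{A}$ chart by chart.

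\textbf{Step 1: localization.} Fix a finite cover of $M$ by admissible charts $\Cchart_j : V^j_{x_1}\times V^j_{x_2} \to U_j$, a subordinate partition of unity $(\chi_j)$, and functions $\chi'_j \in C^\infty_{\comp}(U_j)$ with $\chi'_j = 1$ near $\supp \chi_j$. For $f \in C^\infty_{\leaf}(M)$ I would write
\[
\mathbf{A} f = \sum_j \chi_j \mathbf{A} \chi'_j f + \sum_j \chi_j \mathbf{A}(1-\chi'_j) f .
\]
By the second item of Definition~\ref{definition:leafwise-pseudo}, each $\chi_j \mathbf{A}(1-\chi'_j)$ is residual. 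By Definition~\ref{definition:residual} it is therefore given by a leafwise integral \eqref{equation:formula-r}, with kernel $K^j_h(x,\cdot) \in C^\infty_{\comp}(L(x))$ supported in a uniform leafwise ball. Such an operator manifestly restricts to $L$, with properly supported kernel $K^j_h|_{L\times L}$.

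\textbf{Step 2: the local terms.} By the first item of Definition~\ref{definition:leafwise-pseudo}, $\chi_j \mathbf{A} \chi'_j$ equals $(\Cchart_j^{-1})^* \Op^{\R^n}_{h,\leaf}(a_j)\, \Cchart_j^*$. Formula \eqref{equation:leafwise-rn} shows that $(\Op^{\R^n}_{h,\leaf}(a_j) g)(x_1,\cdot)$ depends only on $g(x_1,\cdot)$, that is, only on the values of $g$ on the plaque $\{x_1\}\times V^j_{x_2}$. Now $L \cap U_j$ is a countable union of such plaques (indexed by values of $x_1$). On each plaque, this gives an operator with kernel compactly supported in plaque $\times$ plaque, namely a standard semiclassical pseudodifferential operator with symbol $a_j(x_1,\cdot,\cdot)$ cut off by $\chi_j,\chi'_j$.

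I define $\mathbf{A}_L$ as the sum over $j$ and over plaques of these operators, together with the restricted residual kernels from Step~1. This is a locally finite sum. Every plaque kernel is supported within leafwise distance $\lesssim \operatorname{diam}(U_j)$ of the diagonal, and so are the residual kernels. Hence $\mathbf{A}_L$ is properly supported: its kernel lives in a uniform neighbourhood of the diagonal in $L \times L$.

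\textbf{Step 3: the identity and mapping property.} The relation $\mathbf{A}_L\mathbf{r}_L f = \mathbf{r}_L \mathbf{A} f$ holds term by term, by construction. It remains to see that $\mathbf{A}_L$ maps $C^\infty(L)$, the bounded-geometry space \eqref{equation:cl}, to itself. I would derive this from the uniform symbol bounds \eqref{equation:bounds-symbol} on the compact sets $\supp\chi'_j$, combined with the uniform $\mc{O}(h^\infty)$ bounds on residual kernels. The number of plaques of all charts meeting a leafwise ball of fixed radius is uniformly bounded, by compactness of $M$ and the uniform geometry of leaves. Hence derivatives of $\mathbf{A}_L g$ are controlled by finitely many derivatives of $g$, uniformly on $L$.

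\textbf{Expected obstacle.} The main delicate point is that $\mathbf{A}_L$ must be well defined on all of $C^\infty(L)$, not just on the image of $\mathbf{r}_L$, which may be far from surjective. So the definition has to be intrinsic to $L$, which is why it goes through kernels rather than through extensions of functions. I also need uniform bounded overlap of plaques, in order to make the sum locally finite with uniform constants. This should follow from choosing charts with slightly larger concentric charts and using the leafwise continuity in Definition~\ref{definition:foliation}.
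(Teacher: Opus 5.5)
Your proposal is correct and is essentially the paper's proof. Both split $\mathbf{A}$ with a partition of unity into two kinds of terms: those with disjointly supported cutoffs, which are residual and restrict to $L$ via the leafwise kernel in \eqref{equation:formula-r}, and chart-localized terms, which are leafwise quantizations acting plaque by plaque by \eqref{equation:leafwise-rn}. Your splitting $\chi_j\mathbf{A}\chi'_j+\chi_j\mathbf{A}(1-\chi'_j)$ is a leaner version of the paper's double sum.

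The paper stops at ``each term restricts''. Your explicit assembly of the kernel on $L$, the uniform proper support, and the mapping property on $C^\infty(L)$ are therefore added detail rather than a different route. The only slip is the source of the uniform off-diagonal bound for plaque kernels: it comes from bounds on $\partial_{x_2}\Cchart_j$ over the compact set $\Cchart_j^{-1}(\supp\chi'_j)$, not from the ambient $\operatorname{diam}(U_j)$.
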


In addition, the operator $\mathbf{A}_L$ is $h$-semiclassical pseudodifferential in the usual sense on $L$ and the principal symbol of $\mathbf{A}_L$ is given by $\sigma_{\mathbf{A}}|_L$, the restriction of the principal of $\mathbf{A}$ to $L$ (see \eqref{equation:symbol} below for a definition of the principal symbol).

\begin{proof}
First, observe that the statement is immediate for residual operators, by \eqref{equation:formula-r}. Now, consider a cover of $M = \cup_{i=1}^N U_i$ by open subsets such that there exists local admissible coordinates $\Cchart_i$ mapping onto $U_i$. Let $\mathbf{1}=\sum_i \chi_i$ be a partition of unity subordinated to this cover. Let $\chi_i' \in C^\infty_{\comp}(U_i)$ such that $\chi_i' = 1$ on the support of $\chi_i$. We then write
\[
\mathbf{A} = \sum_{i,j} \chi'_i \chi_j \mathbf{A} \chi_i + (1-\chi'_i) \chi_j \mathbf{A} \chi_i.
\]
It suffices to verify that each term in the sum satisfies the conclusions of the lemma (i.e. induces a properly supported leafwise operator). By Definition \ref{definition:leafwise-pseudo} (second item), $(1-\chi'_i) \chi_j \mathbf{A} \chi_i$ is residual so it induces a leafwise operator by the preliminary observation. As to $\chi'_i \chi_j \mathbf{A} \chi_i$, it is of the form $\Op_{h,\leaf}^{\R^n}(a_{i,j})$ on the coordinate patch $\Cchart_i^{-1}(U_i) \subset \R^n$ and the statement is immediate for such operators using the formula \eqref{equation:leafwise-rn} (see the remark below \eqref{equation:leafwise-rn}).
\end{proof}

\begin{remark}[Leafwise support of operators] \label{remark:support}
Lemma \ref{lemma:induced-operator} shows that the operators in $\Psi^m_{h,\leaf}(M,\mc{F})$ induce properly supported semiclassical pseudodifferential operators on each leaf. In addition, on each leaf, the off-diagonal size of the support of the Schwartz kernel is uniformly bounded (with respect to all leaves). This restriction makes it easier to discuss numerous properties of pseudodifferential operators (such as boundedness on $L^2$) since they can all be reduced to the classical case of $\R^d$, with an additional parameter $x_1 \in \R^{n-d}$ ranging over a compact set. However, this support condition excludes many natural operators. For example, the leafwise inverse $(1+h^2\boldsymbol{\Delta}_{\mc{F}})^{-1}$ of the leafwise Laplacian (see \S\ref{ssection:examples} below where it is introduced) is \emph{not} properly supported. As a consequence, while the leafwise Laplacian $1+h^2\boldsymbol{\Delta}_{\mc{F}}$ lies in $\Psi^2_{h,\leaf}(M,\mc{F})$, and admits a well-defined inverse $(1+h^2\boldsymbol{\Delta}_{\mc{F}})^{-1}$, this inverse \emph{does not} belong to $\Psi^{-2}_{h,\leaf}(M,\mc{F})$ due to the failure of the support property. For most applications, this is not a serious limitation: indeed, it is sufficient to work with a parametrix $\mathbf{B} \in \Psi^{-2}_{h,\leaf}(M,\mc{F})$ such that $\mathbf{B}(1+h^2\boldsymbol{\Delta}_{\mc{F}})-\mathbbm{1}$ is residual.

If the leafwise calculus were defined by allowing residual operators that are not (uniformly) properly supported in the leaves, one would need to impose additional off-diagonal decay conditions on the Schwartz kernel "at infinity in the leaves" in order to establish standard properties, such as (uniform) $L^2$-boundedness of these operators on each leaf. To avoid having to address such issues, we require that the Schwartz kernel of residual operators be uniformly properly supported on each leaf.
\end{remark}

Finally, let us also mention that if $E, E' \to M$ are two leafwise smooth auxiliary vector bundles over $M$, it is possible to define similarly leafwise semiclassical pseudodifferential operators
\[
\mathbf{A} : C^\infty_{\comp}(M,E) \to C^\infty_{\comp}(M,E').
\]
We denote by $\Psi^m_{h,\mathrm{leaf}}(M,\mc{F}, E \to E')$ the class of such operators. The details of this construction are left for the reader.

\subsubsection{Quantization} We let $S^m_{h,\mathrm{leaf}}(T^*\mc{F})$ be the set of leafwise smooth functions $a$ on $T^*\mc{F}$ (see \S\ref{sssection:smooth-functions} where this notion is defined) satisfying \eqref{equation:bounds-symbol} in admissible coordinates. Namely $a \in S^m_{h,\mathrm{leaf}}(T^*\mc{F})$ if and only if in any admissible patch of coordinates \eqref{equation:admissible-coordinates}, the corresponding function $a_{\Cchart} \in C^\infty_{\leaf}(V_{x_1} \times V_{x_2} \times \R^d)$ satisfies the estimates \eqref{equation:bounds-symbol}. Notice that this is intrinsically defined on the manifold $M$ by the change of variable formula \eqref{equation:pullback-symbol}.

We let $M = \cup_{i=1}^N U_i$ be a cover of $M$ by open subsets such that there exists local admissible coordinates $\Cchart_i : V_{i,x_1} \times V_{i,x_2} \to U_i$ as in \eqref{equation:admissible-coordinates}. Consider a partition of unity $\mathbf{1} = \sum_i \chi_i$ subordinated to this cover and functions $\chi'_i \in C^\infty_{\comp}(U_i)$ such that $\chi'_i \equiv 1$ on $\supp(\chi_i)$. For a symbol $a \in S^m_{h,\mathrm{leaf}}(T^*\mc{F})$, we define its leafwise quantization by:
\begin{equation}
\label{equation:quantization-manifold}
\Op_{h,\leaf}(a) := \sum_i \chi'_i (\Cchart_i^{-1})^* \Op_{h,\leaf}^{\R^n}(\Cchart_i^*(\chi_i a)) \Cchart_i^* \chi'_i.
\end{equation}
(Compare with \cite[Equation (E.1.38)]{Dyatlov-Zworski-19}.) It follows from an adaptation of standard arguments that $\Op_{h,\leaf}(a) \in \Psi^m_{h,\leaf}(M,\mc{F})$, see the proof of \cite[Proposition E.15]{Dyatlov-Zworski-19} in the classical case.

\begin{remark}Alternatively, one could define a more intrinsic leafwise quantization by:
\[
\begin{split}
\Op_{h,\mathrm{leaf}}(a)f(x) &= \dfrac{1}{(2\pi h)^d} \int_{L(x)} \int_{T^*_x\mc{F}} e^{-\tfrac{i}{h}\xi \cdot (\exp_x^{\mathrm{leaf}})^{-1}(y)} a(x,\xi) f(y) \chi(d_{\mc{F}}(x,y)) \\
& \hspace{9cm} \dd \vol(\xi) \dd \vol(y),
\end{split}
\]
where $\exp^{\leaf}$ is the leafwise exponential map (that is the exponential map induced by the metric on each leaf), $\chi \in C^\infty_{\comp}(\R)$ is a bump function equal to $1$ in a neighborhood of $0$, $d_{\mc{F}}$ denotes the leafwise Riemannian distance, $\dd \vol(y)$ is the Riemannian volume on the leaf (induced by the restriction of the metric to the leaves) and $\dd\vol(\xi)$ denotes the corresponding volume on $T^*_x\mc{F}$.
\end{remark}

Following standard arguments, it is possible to prove that any $\mathbf{A} \in \Psi^m_{h,\mathrm{leaf}}(M,\mc{F})$ can be written in the form
\begin{equation}
\label{equation:decomposition}
\mathbf{A} = \Op_{h,\mathrm{leaf}}(a) + \mathbf{R},
\end{equation}
for some $a \in S^m_{h,\mathrm{leaf}}(T^*\mc{F})$ and some residual operator $\mathbf{R}$, see \cite[Proposition E.16]{Dyatlov-Zworski-19} for a proof in the standard case; the same proof applies \emph{verbatim}. We emphasize that the decomposition \eqref{equation:decomposition} is not unique.

\subsubsection{Main properties} \label{sssection:main-prop} The leafwise semiclassical calculus enjoys similar properties to the standard semiclassical calculus: there is a well-defined principal symbol, the adjoint belongs to the same class, and the leafwise pseudodifferential operators form an algebra, that is composition is well-defined within this class of operators. The principal symbol of $\mathbf{A} \in \Psi^m_{h,\mathrm{leaf}}(M)$ is defined as follows. Consider an admissible patch of coordinates $\Cchart$ mapping onto a neighborhood $U \subset M$. Let $\chi \in C^\infty_{\comp}(U)$ be a smooth bump function equal to $1$ on an open subset of $U$. By Definition \ref{definition:leafwise-pseudo}, we can write $\Cchart^*\circ \chi \mathbf{A} \chi' \circ (\Cchart^{-1})^* = \Op_{h,\mathrm{leaf}}^{\R^n}(a)$ for some symbol $a \in S^m_{h,\leaf}(V_{x_1} \times V_{x_2} \times \R^d)$. We then define the principal symbol $\sigma_{\mathbf{A}} \in S^m_{h,\mathrm{leaf}}(T^*\mc{F})/hS^{m-1}_{h,\mathrm{leaf}}(T^*\mc{F})$ as
\begin{equation}
\label{equation:symbol}
\sigma_{\mathbf{A}} = [\Cchart^* a], \qquad \text{ on } \{\chi=1\},
\end{equation}
where $[\bullet]$ denotes the equivalence class in $S^m_{h,\leaf}(T^*\mc{F})$ modulo $hS^{m-1}(T^*\mc{F})$. Here, $\Cchart^*a$ denotes the pullback of the symbol $a$ to $T^*\mc{F}$. The following holds:

\begin{proposition}[Principal symbol, adjoint and composition] \label{proposition:pseudo}
Let $\mathbf{A} \in \Psi^m_{h,\mathrm{leaf}}(M)$. The following properties hold:

\begin{itemize}
\item The principal symbol of $\mathbf{A}$ is well-defined as an element
\[
\sigma_{\mathbf{A}} \in S^m_{h,\mathrm{leaf}}(T^*\mc{F})/hS^{m-1}_{h,\mathrm{leaf}}(T^*\mc{F}).
\]
\item The adjoint $\mathbf{A}^* \in \Psi^m_{h,\mathrm{leaf}}(M)$ belongs to the leafwise calculus and has principal symbol $\sigma_{\mathbf{A}^*} = \overline{\sigma}_{\mathbf{A}}$.

\item For $\mathbf{A} \in \Psi^m_{h,\mathrm{leaf}}(M,\mc{F})$, $\mathbf{B} \in \Psi^{m'}_{h,\mathrm{leaf}}(M,\mc{F})$, one has
\[
\mathbf{A} \circ \mathbf{B} \in \Psi^{m+m'}_{h,\mathrm{leaf}}(M,\mc{F}).
\]
In addition $\sigma_{\mathbf{A} \mathbf{B}} = \sigma_{\mathbf{A}} \sigma_{\mathbf{B}}$.
\end{itemize}
\end{proposition}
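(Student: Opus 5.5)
The plan is to reduce each of the three claims to the corresponding statement of the standard semiclassical calculus in $\R^d$, with a continuous parameter $x_1$. I will work with the decomposition \eqref{equation:decomposition}, $\mathbf{A} = \Op_{h,\leaf}(a) + \mathbf{R}$, and the quantization \eqref{equation:quantization-manifold}. Since the principal symbol, adjoint and composition are defined by local formulas, I will cut off with a partition of unity $\chi_i$ subordinate to admissible charts $\Cchart_i$. Each term is then either local in one admissible chart, hence of the form $\Op^{\R^n}_{h,\leaf}(a_i)$ on $V_{x_1}\times V_{x_2}$, or residual in the sense of Definition \ref{definition:residual}.

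\textbf{Principal symbol.} Suppose two admissible charts $\Cchart,\Cchart'$ overlap. Then $\kappa=\Cchart^{-1}\circ\Cchart'$ is a leafwise diffeomorphism of the form \eqref{equation:ecriture}. Lemma \ref{lemma:egorov-rn} says that the full symbols $a,a'$ of $\chi\mathbf{A}\chi'$ in the two charts satisfy $a = \widetilde{(\kappa^{-1})}^* a' + \mc{O}_{S^{m-1}_{h,\leaf}}(h)$. This is exactly the equivariance \eqref{equation:equivariance} modulo $hS^{m-1}_{h,\leaf}$, so the classes glue into an element of $S^m_{h,\leaf}(T^*\mc{F})/hS^{m-1}_{h,\leaf}(T^*\mc{F})$. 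Independence of the cutoff $\chi$ on $\{\chi=1\}$ follows because changing $\chi$ only adds terms whose supports are disjoint, hence residual. I also need a residual operator to have full symbol $\mc{O}(h^\infty)$ in $S^{-\infty}$ in any chart. This holds because its kernel, written in the $x_2$ variables, is smooth with all derivatives $\mc{O}(h^\infty)$ uniformly in $x_1$.

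\textbf{Adjoint.} In a chart, $(\Op^{\R^n}_{h,\leaf}(a))^*$ is computed fiber by fiber over $x_1$, with respect to the leafwise density $\alpha(x_1,x_2)\,\dd x_2$. This density is smooth in $x_2$ and continuous in $x_1$, as in the proof of Lemma \ref{lemma:leafwise-density}. The standard formula $a^*\sim e^{ih\langle D_{x_2},D_\xi\rangle}\bar a$, conjugated by $\alpha$, applies verbatim, and the $x_1$-continuity of the error is inherited since every remainder is an integral expression in $a$ and $\alpha$ and their $x_2$-derivatives. The leading term is $\bar a$. The adjoint of a residual operator has kernel $\overline{K_h(y,x)}$ (times density factors). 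It is still supported in $d_L(x,y)\le C$ and has $\mc{O}(h^\infty)$ bounds, so it is residual. Off-diagonal pieces $\chi\mathbf{A}\chi'$ with disjoint supports have residual adjoints, so both items of Definition \ref{definition:leafwise-pseudo} are satisfied.

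\textbf{Composition.} I will write $\mathbf{A}\mathbf{B}=\sum_{i,j}\chi_i\mathbf{A}\chi_j\mathbf{B}$ with the cutoffs of \eqref{equation:quantization-manifold}. There are three cases.
\begin{itemize}
\item If $\supp\chi_i\cap\supp\chi_j=\emptyset$, the term is a residual operator composed with a leafwise operator.
\item If the supports meet, I enlarge to a single admissible chart containing both and reduce to $\Op^{\R^n}_{h,\leaf}(a)\Op^{\R^n}_{h,\leaf}(b)=\Op^{\R^n}_{h,\leaf}(a\#b)$ with $a\# b = ab + \mc{O}_{S^{m+m'-1}_{h,\leaf}}(h)$. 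This is the parametrized Moyal product.
\item Compositions involving residual operators must be residual.
\end{itemize}

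The main obstacle is the last point: showing that $\mathbf{R}\mathbf{B}$ and $\mathbf{B}\mathbf{R}$ are residual. Their kernels are $\int_{L} K_h(x,z)K_B(z,y)\,\dd\vol(z)$. The support bound $d_L(x,y)\le C+C'$ follows from the uniform off-diagonal support of both factors, which is exactly why Remark \ref{remark:support} imposes proper support. For the $\mc{O}(h^\infty)$ derivative bounds, I would move derivatives in $y$ onto $K_B$ via its transpose: $\mathbf{B}^*$ is a leafwise operator, and applying it to $K_h(x,\cdot)$ costs at most $h^{-m'-k}$ in a finite leafwise $C^k$ norm. Covering the compact set $d_L\le C$ by finitely many admissible charts, with uniformity in the leaf coming from compactness of $M$, then gives the required uniform estimates. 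Transverse continuity of the resulting kernel follows from the transverse continuity of both factors.
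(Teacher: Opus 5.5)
Your reduction is the paper's own: its proof uses Lemma \ref{lemma:egorov-rn} with \eqref{equation:equivariance} for the symbol, and \cite[Proposition E.17]{Dyatlov-Zworski-19} run with $x_1$ as a continuous parameter for the rest. Your handling of residual compositions (uniform off-diagonal support, letting the leafwise transpose of $\mathbf{B}$ act on $K_h(x,\cdot)$) correctly fills in what ``verbatim'' hides.

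The point to fix is \emph{which} adjoint you construct. Computing plaque by plaque with respect to the leafwise Riemannian density $\alpha(x_1,x_2)\,\dd x_2$ produces the operator $\mathbf{A}^\flat$ with $(\mathbf{A}^\flat)_L=(\mathbf{A}_L)^*$. The paper, in the remark right after the proposition, explicitly distinguishes this operator from $\mathbf{A}^*$. The $L^2(M)$-adjoint must be computed with respect to the conditional densities $J(x_1,x_2)\,\dd x_2$ of $\vol$ from \eqref{equation:utile}. These are available only when $\mc{F}$ is absolutely continuous in the sense of Definition \ref{definition:foliation}. In that case, in each chart, $\mathbf{A}^*$ acts on plaques as $\rho^{-1}(\mathbf{A}_L)^*\rho$ with $\rho=J/\alpha>0$ smooth in $x_2$, and your argument goes through with the same principal symbol $\bar a$. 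Without absolute continuity the $L^2(M)$-adjoint is in general not leafwise. For a leafwise vector field $Y$ one has $(hY)^*=-hY-h\operatorname{div}_{\vol}Y$, and $\operatorname{div}_{\vol}Y=J^{-1}\partial_{x_2}\cdot(JY)$ needs $J$.

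So you have proved the leafwise-adjoint version. This is the version valid in the generality of \S\ref{ssection:calculus-manifold}, and it is what the leafwise half of Lemma \ref{lemma:garding} actually needs: there $\mathbf{C}_L$ must equal $(\mathbf{B}_L)^*\mathbf{B}_L-(\mathbf{A}_L)^*\mathbf{A}_L$. The global half of that lemma needs the $J$-version. You should say which one you mean.

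Two smaller slips. First, changing the left cutoff in the definition of $\sigma_{\mathbf{A}}$ does not produce a disjoint-support term, since $(\chi_1-\chi_2)\mathbf{A}\chi'$ is generally not residual. The symbols agree on $\{\chi_1=\chi_2=1\}$ because of the exact identity $\psi\,\Op^{\R^n}_{h,\leaf}(a)=\Op^{\R^n}_{h,\leaf}(\psi a)$. Only changes of the right cutoff away from $\supp\chi$ are residual. Second, in the composition, $\chi_i\mathbf{A}\chi_j\mathbf{B}$ is not yet local in a chart. Insert $\tilde\chi_j$ equal to $1$ near $\supp\chi_j$, note that $\chi_j\mathbf{B}(1-\tilde\chi_j)$ is residual, and take the partition fine enough that $\supp\chi_i\cup\supp\tilde\chi_j$ lies in a single admissible chart.
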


We emphasize that, in general, for $\mathbf{A} \in \Psi^m_{h,\leaf}(M,\mc{F})$ and $L \in \mc{F}$, $(\mathbf{A}_L)^* \neq (\mathbf{A}^*)_L$. However, there is always an operator $\mathbf{B} \in \Psi^m_{h,\leaf}(M,\mc{F})$ such that $\mathbf{B}_L = (\mathbf{A}_L)^*$, and the principal symbol of $\mathbf{B}$ satisfies $\sigma_{\mathbf{B}} = \sigma_{\mathbf{A}^*} = \overline{\sigma_{\mathbf{A}}}$. 

\begin{proof}
The first item follows from Lemma \ref{lemma:egorov-rn} combined with \eqref{equation:equivariance}. As to the second and third item, the proof follows \emph{verbatim} the usual proof for the adjoint and the composition of standard semiclassical pseudodifferential operators, see \cite[Proposition E.17]{Dyatlov-Zworski-19}.
\end{proof}

More generally, for a vector-valued operator $\mathbf{A} \in \Psi^m_{h,\mathrm{leaf}}(M,\mc{F},E \to E')$, the principal symbol is well-defined as an element
\[
\sigma_{\mathbf{A}} \in S^m_{h,\mathrm{leaf}}(T^*\mc{F} \otimes \mathrm{Hom}(E,E'))/hS^{m-1}_{h,\mathrm{leaf}}(T^*\mc{F} \otimes \mathrm{Hom}(E,E')).
\]
The principal symbol of the adjoint is then given by
\[
\sigma_{\mathbf{A}^*}(x,\xi) = \sigma_{\mathbf{A}}(x,\xi)^* \in \mathrm{Hom}(E',E),
\]
where the $\bullet^*$ on the right-hand side is understood in the sense of bounded linear operators between Hermitian vector spaces. In the particular case where $E=E'$, we say that the principal symbol is \emph{diagonal} (or that $\mathbf{A}$ is a \emph{principally diagonal} operator) if $\sigma_{\mathbf{A}} = a \cdot \mathbf{1}_E$ for some scalar symbol $a \in S^m_{h,\leaf}(T^*\mc{F})$.

\subsubsection{Examples} \label{ssection:examples} We now provide two examples of leafwise operators in the calculus. We let $E \to M$ be a smooth Hermitian bundle over $M$.

\begin{itemize}

\item Let $\boldsymbol{\nabla}^E$ be an arbitrary smooth connection on $E$; it can be seen as a differential operator of order $1$:
\[
\boldsymbol{\nabla}^E : C^\infty(M,E) \to C^\infty(M,T^*M \otimes E).
\]
By restricting the target space (that is, by only plugging vector fields tangent to the foliation in the connection), one can define an \emph{leafwise connection}
\[
\boldsymbol{\nabla}^E_{\mc{F}} : C^\infty(M,E) \to C^\infty_{\leaf}(M, T^*\mc{F} \otimes E).
\]
That is, for all $x \in M, v \in T_x \mc{F}$ and $f \in C^\infty(M,E)$, $\boldsymbol{\nabla}^E_{\mc{F}}f(x;v) := \nabla^E_v f(x)$. It is immediate to verify that
\[
h \boldsymbol{\nabla}^E_{\mc{F}} \in \Psi^1_{h,\mathrm{leaf}}(M, {\mc{F}}, E \to T^*\mc{F} \otimes E)
\]
is a leafwise uniform semiclassical operator of order $1$. Its principal symbol is
\[
\sigma_{h \boldsymbol{\nabla}^E_{\mc{F}}}(x,\xi) = i \xi \otimes \mathbbm{1}_{E_x}, \qquad \forall (x,\xi) \in T^*\mc{F}.
\]

\item Another example of a leafwise pseudodifferential operator is provided by the leafwise Laplacian
\begin{equation}
\label{equation:laplacien}
\boldsymbol{\Delta}_{\mc{F}}^E : C^\infty_{\mathrm{leaf}}(M,E) \to C^\infty_{\mathrm{leaf}}(M,E),
\end{equation}
defined as $\boldsymbol{\Delta}_{\mc{F}}^E := (\boldsymbol{\nabla}^E_{\mc{F}})^*\boldsymbol{\nabla}^E_{\mc{F}}$, where $(\boldsymbol{\nabla}^E_{\mc{F}})^*$ denotes the (leafwise) adjoint of the leafwise operator. Note that $h^2\boldsymbol{\Delta}^E_{\mc{F}} \in \Psi^2_{h,\mathrm{leaf}}(M,\mc{F})$ with principal symbol
\[
\sigma_{h^2\boldsymbol{\Delta}^E_{\mc{F}}}(x,\xi) = |\xi|^2_g \otimes \mathbbm{1}_{E_x}
\]
(This follows from Proposition \ref{proposition:pseudo}.) When $E = M \times \C$ is the trivial line bundle, we simple write $\boldsymbol{\Delta}_{\mc{F}}$ for the leafwise Laplacian.
\end{itemize}

\subsubsection{Ellipticity and parametrices} As in the standard semiclassical calculus, we say that $\mathbf{A} \in \Psi^m_{h,\leaf}(M,\mc{F})$ is \emph{uniformly elliptic} on $T^*\mc{F}$ if there exists constant $C > 0$ such that for all $h > 0$ small enough:
\[
|\sigma_{\mathbf{A}}(x,\xi)| \geq C \langle\xi\rangle^m, \qquad \forall (x,\xi) \in T^*\mc{F}.
\] 
Notice that this inequality is independent of the choice of representative for the principal symbol $\sigma_{\mathbf{A}}$ (modulo $S^m_{h,\leaf}(T^*\mc{F}))$. It can also be extended to principally scalar operators acting on vector bundles. In particular, the operator $1+h^2\Delta_{\mc{F}}^E$ is elliptic since its principal symbol is given by $(1+ |\xi|^2_g) \otimes \mathbbm{1}_{E_x}$.

As in the standard calculus, uniformly elliptic operators can be inverted modulo residual ones:

\begin{lemma}
\label{lemma:parametrix}
Suppose that $\mathbf{A} \in \Psi^m_{h,\leaf}(M,\mc{F})$ is uniformly elliptic on $T^*\mc{F}$. Then, there exists $\mathbf{B} \in \Psi^{-m}_{h,\leaf}(M,\mc{F})$ such that $\mathbf{A}\mathbf{B}-\mathbbm{1}$ is residual.
\end{lemma}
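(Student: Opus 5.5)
The plan is to run the standard symbolic parametrix construction, using the tools already set up: the quantization \eqref{equation:quantization-manifold}, the composition formula of Proposition \ref{proposition:pseudo}, and the decomposition \eqref{equation:decomposition}.

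\textbf{First approximation.} Fix a representative $a \in S^m_{h,\leaf}(T^*\mc{F})$ of $\sigma_{\mathbf{A}}$. Uniform ellipticity gives $|a(x,\xi)| \geq C\langle\xi\rangle^m$ for $h$ small. Set $b_0 := 1/a$. We check that $b_0 \in S^{-m}_{h,\leaf}(T^*\mc{F})$. By the chain rule, every derivative $\partial^\beta_{x_2}\partial^\alpha_\xi b_0$ is a finite sum of terms of the form
\[
a^{-1-k}\prod_{j=1}^{k}\partial^{\beta_j}_{x_2}\partial^{\alpha_j}_\xi a, \qquad \sum_j \alpha_j = \alpha,\ \sum_j \beta_j = \beta.
\]
Each such term is bounded by $\langle\xi\rangle^{-m-|\alpha|}$. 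All these derivatives are continuous in $(x_1,x_2,\xi)$ because $a$ is leafwise smooth and does not vanish. Set $\mathbf{B}_0 := \Op_{h,\leaf}(b_0) \in \Psi^{-m}_{h,\leaf}(M,\mc{F})$. By Proposition \ref{proposition:pseudo}, $\sigma_{\mathbf{A}\mathbf{B}_0} = a b_0 = 1$. Hence $\mathbf{A}\mathbf{B}_0 = \mathbbm{1} - h\mathbf{R}_1$ with $\mathbf{R}_1 \in \Psi^{-1}_{h,\leaf}$. Here we use that an operator with vanishing principal symbol in $\Psi^0_{h,\leaf}$ has the form $h\Psi^{-1}_{h,\leaf}$; this follows from \eqref{equation:decomposition} together with the fact that residual operators lie in $h^\infty\Psi^{-\infty}_{h,\leaf}$.

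\textbf{Iteration.} Inductively, suppose $\mathbf{A}\mathbf{B}^{(k)} = \mathbbm{1} - h^{k}\mathbf{R}_{k}$ with $\mathbf{R}_k \in \Psi^{-k}_{h,\leaf}$. Let $r_k$ be a representative of $\sigma_{\mathbf{R}_k}$, and set $b_k := r_k/a \in S^{-m-k}_{h,\leaf}$. Put $\mathbf{B}^{(k+1)} := \mathbf{B}^{(k)} + h^k\Op_{h,\leaf}(b_k)$. Then the principal symbol of $\mathbf{A}\,h^k\Op_{h,\leaf}(b_k)$ cancels that of $h^k\mathbf{R}_k$, and the error improves to $h^{k+1}\Psi^{-k-1}_{h,\leaf}$.

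\textbf{Resummation.} We Borel-sum, choosing $b \sim \sum_k h^k b_k$ in $S^{-m}_{h,\leaf}(T^*\mc{F})$. The standard construction multiplies the terms by cutoffs $\chi(\epsilon_k\xi)$, or by $h$-dependent cutoffs, with $\epsilon_k \to 0$ fast. This construction goes through verbatim, since the seminorms involve only $x_2,\xi$-derivatives and everything stays continuous in $x_1$; by compactness of $M$, finitely many admissible charts suffice. Setting $\mathbf{B} := \Op_{h,\leaf}(b)$, we get $\mathbf{A}\mathbf{B} - \mathbbm{1} \in h^\infty\Psi^{-\infty}_{h,\leaf}(M,\mc{F})$.

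\textbf{Main obstacle.} The key step, and the one I expect to require care, is showing that $h^\infty\Psi^{-\infty}_{h,\leaf}$ consists of residual operators in the sense of Definition \ref{definition:residual}. The uniform leafwise support condition is immediate. The quantization \eqref{equation:quantization-manifold} uses cutoffs $\chi'_i$ supported in chart domains, and composition preserves a uniform bound on the off-diagonal leafwise support. Thus $\mathbf{A}\mathbf{B}-\mathbbm{1}$ has kernel supported in $\{d_{L(x)}(x,y) \leq C\}$, provided $\mathbf{A}$ has this property; this holds after subtracting its residual part, and residual parts compose to residual operators.

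For the $\mc{O}(h^\infty)$ bounds on $\nabla^{k'}_x\nabla^k_y K_h$, work chart by chart. A symbol $c \in h^N S^{-N}$ gives the kernel
\[
(2\pi h)^{-d}\int e^{i\xi\cdot(x_2-y_2)/h} c\,\dd\xi.
\]
For $N$ large, differentiating $k+k'$ times in $x_2,y_2$ costs at most $h^{-d-k-k'}$, and the $\xi$-integral converges. The resulting bound is $\mc{O}(h^{N-d-k-k'})$, uniformly in $x_1$ over compact sets. Since $N$ is arbitrary, we obtain the required $\mc{O}(h^\infty)$ bounds.
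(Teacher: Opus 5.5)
Your proof is correct. It is the same standard parametrix construction as the paper's: invert the principal symbol, then remove the $h\Psi^{-1}_{h,\leaf}$ error by an asymptotic sum. The difference is where the summation is done.

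The paper keeps $\Op_{h,\leaf}(1/a)$ fixed and Borel-sums the Neumann series $\mathbf{Q}\sim\sum_k (h\mathbf{R})^k$ at the operator level, taking $\mathbf{B}=\Op_{h,\leaf}(1/a)\mathbf{Q}$. Because the leafwise off-diagonal support of $(h\mathbf{R})^k$ grows with $k$, the paper has to truncate the Schwartz kernels near the diagonal to stay inside the calculus.

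You instead correct at the symbol level with $b_k=r_k/a$, Borel-sum the symbols $b\sim\sum_k h^k b_k$, and quantize once with \eqref{equation:quantization-manifold}. As a result, $\mathbf{B}=\Op_{h,\leaf}(b)$ automatically has uniformly bounded leafwise support, and the truncation issue never arises. Since $S^{-m-k}_{h,\leaf}\subset S^{-m-K}_{h,\leaf}$ for $k\geq K$, $h$-cutoffs alone suffice for this resummation.

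In exchange, you have to verify explicitly that $h^\infty\Psi^{-\infty}_{h,\leaf}$ consists of residual operators in the sense of Definition \ref{definition:residual}; the paper only asserts this at the end. Your chart-wise kernel estimate for $(2\pi h)^{-d}\int e^{i\xi\cdot(x_2-y_2)/h}c\,\dd\xi$ with $c\in h^NS^{-N}_{h,\leaf}$, together with the uniform support bound, settles it. Your route costs one extra verification but avoids the kernel-truncation step; the paper's route is shorter given the composition calculus.
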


\begin{proof}
This is a mere adaptation of the usual proof (see e.g. \cite[Theorem 5.3.7]{Lefeuvre-book} or \cite[Proposition E.32]{Dyatlov-Zworski-19}). First, consider a representative $a \in S^m_{h,\leaf}(T^*\mc{F})$ of $\sigma_{\mathbf{A}}$. The ellipticity assumption guarantees that $b := 1/a \in S^{-m}_{h,\leaf}(T^*\mc{F})$. In addition, by construction, $\mathbf{A}\Op_{h,\leaf}(b)-\mathbbm{1} = -h \mathbf{R} \in \Psi^{-1}_{h,\leaf}(M,\mc{F})$. By the Borel summation lemma (see \cite[Lemma 5.3.10]{Lefeuvre-book} for instance), there exists $\mathbf{Q} \in \Psi^0_{h,\leaf}(M,\mc{F})$ with asymptotic expansion $\mathbf{Q} \sim \sum_{k \geq 0} (h\mathbf{R})^k$. (Technically, the off-diagonal support of the Schwartz kernel of $(h\mathbf{R})^k$ grows as $k \to +\infty$ so one should modify this operator by multiplying its Schwartz kernel by a cutoff function supported near the diagonal in each leaf; this does not affect the asymptotic summation process.) It then suffices to observe that
\[
\mathbf{A}\Op_{h,\leaf}(b)\mathbf{Q} = \mathbbm{1} + \mathbf{K},
\]
where $\mathbf{K}$ is residual.
\end{proof}

\subsubsection{Action of diffeomorphisms}

In this paragraph, we assume that $\kappa : M \to M$ is a leafwise diffeomorphism in the sense of \S\ref{sssection:leafwise-diffeo}. It is a direct consequence of Lemma \ref{sssection:leafwise-diffeo} that the following (weak) Egorov-type result holds on manifolds: 

\begin{lemma}[Action of diffeomorphisms in the leafwise calculus]
\label{lemma:egorov-leafwise}
Let $\mathbf{A} \in \Psi^m_{h,\leaf}(M,\mc{F})$. Then $\mathbf{A}_\kappa := \kappa^* \circ \mathbf{A} \circ (\kappa^{-1})^* \in \Psi^m_{h,\leaf}(M,\mc{F})$ and has principal symbol
\[
\sigma_{\mathbf{A}_\kappa}(x,\xi) = \sigma_{\mathbf{A}}(\kappa(x), \dd \kappa(x)^{-\top} \xi).
\]
\end{lemma}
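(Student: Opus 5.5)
The plan is to reduce the statement to the local computation of Lemma \ref{lemma:egorov-rn}, using the two defining properties of $\Psi^m_{h,\leaf}(M,\mc{F})$ (Definition \ref{definition:leafwise-pseudo}) together with the decomposition \eqref{equation:decomposition}. Write $\mathbf{A} = \Op_{h,\leaf}(a) + \mathbf{R}$ with $\mathbf{R}$ residual, and treat the two pieces separately.

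\emph{Residual part.} By \eqref{equation:formula-r}, $\kappa^*\mathbf{R}(\kappa^{-1})^*$ has kernel $\tilde K_h(x,y) = K_h(\kappa(x),\kappa(y))\,|\det \dd\kappa|_{L(x)}(y)|$ on $L(x)$. Since $\kappa$ maps leaves to leaves and $M$ is compact, $\kappa$ is uniformly bi-Lipschitz along leaves. Hence the support condition $d_{L(x)}(x,y)\le C'$ is preserved. The derivative bounds $\mc{O}(h^\infty)$ follow from the chain rule, because all leafwise derivatives of $\kappa$ and $\kappa^{-1}$ are uniformly bounded. This uniformity comes from the continuity of $\partial^\alpha_{x_2}\kappa_{\Cchart,\Cchart'}$ in Definition \ref{definition:leafwise-diffeo} and compactness. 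The conjugated remainder is therefore residual.

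\emph{Quantized part.} By \eqref{equation:quantization-manifold}, $\Op_{h,\leaf}(a)$ is a finite sum of terms $\chi_i'(\Cchart_i^{-1})^*\Op^{\R^n}_{h,\leaf}(\cdot)\Cchart_i^*\chi_i'$. Conjugating one such term by $\kappa$ gives an operator supported in $\kappa^{-1}(U_i)$. To check the first item of Definition \ref{definition:leafwise-pseudo}, fix an admissible chart $\Cchart$ and cutoffs $\chi,\chi'$, and refine the cover so that each piece sits inside a chart domain where $\kappa_{\Cchart_i,\Cchart} = \Cchart_i^{-1}\circ\kappa\circ\Cchart$ is defined. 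This map is a leafwise diffeomorphism of the form \eqref{equation:ecriture}, so Lemma \ref{lemma:egorov-rn} applies directly. The local operator is $\Op^{\R^n}_{h,\leaf}$ of a symbol in $S^m_{h,\leaf}$, equal to the pulled-back symbol modulo $hS^{m-1}_{h,\leaf}$.

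\emph{Off-diagonal part.} For the second item, with $\supp\chi\cap\supp\chi'=\emptyset$, we have $\chi\,\kappa^*\mathbf{A}(\kappa^{-1})^*\chi' = \kappa^*\big((\chi\circ\kappa^{-1})\mathbf{A}(\chi'\circ\kappa^{-1})\big)(\kappa^{-1})^*$. The inner operator is residual, since the supports $\kappa(\supp\chi)$ and $\kappa(\supp\chi')$ remain disjoint. There is one subtlety: $\chi\circ\kappa^{-1}$ is only leafwise smooth, not smooth. I would handle this by noting that the residual property is stable under multiplication by leafwise smooth functions, since Definition \ref{definition:residual} only involves leafwise derivatives. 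Alternatively, one can dominate by smooth cutoffs with disjoint supports. The residual case above then concludes.

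\emph{Principal symbol.} Lemma \ref{lemma:egorov-rn} gives the local symbol as $a'(\kappa^{-1}(x),\partial_{x_2'}\phi^\top\xi)$; here the roles are swapped, so it reads $a(\kappa(x),\dd\kappa(x)^{-\top}\xi)$. By the transition rule \eqref{equation:matrices}, $\partial_{x_2}\phi^{-\top}$ is exactly the coordinate expression of the map $\dd\kappa(x)^{-\top}: T^*_x\mc{F}\to T^*_{\kappa(x)}\mc{F}$, read in the direction pairing covectors through $\dd\kappa$. Combined with the well-definedness of $\sigma$ (Proposition \ref{proposition:pseudo}), this yields the formula. The main obstacle I expect is bookkeeping rather than substance. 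It consists of the uniformity in the residual part, which needs uniform leafwise bounds on the derivatives of $\kappa^{\pm1}$, and of the issue of merely leafwise smooth cutoffs; compactness of $M$ is what makes both work.
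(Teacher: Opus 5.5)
Your proposal is correct and follows the same route as the paper. The paper's entire proof is the remark that the statement is an immediate consequence of Lemma~\ref{lemma:egorov-rn} combined with Definition~\ref{definition:leafwise-pseudo}, and you supply the bookkeeping it leaves implicit: the conjugated residual part, the chart refinement, and the direction of the covector map. One small correction: for the merely leafwise-smooth cutoffs $\chi\circ\kappa^{-1}$, your two remedies are not alternatives but must be used together. First insert smooth cutoffs with disjoint supports that equal $1$ on those supports, so the second item of Definition~\ref{definition:leafwise-pseudo} applies; then use that residual operators stay residual under multiplication by leafwise smooth functions.
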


\begin{proof}
Immediate consequence of Lemma \ref{lemma:egorov-rn} combined with Definition \ref{definition:leafwise-pseudo}.
\end{proof}

This lemma easily extends to operators acting on vector-valued sections, that is in $\Psi^m_{h,\leaf}(M,\mc{F},E)$. It will be applied with $\kappa = \varphi_t$, an Anosov flow and $\mc{F} = W^u$, the unstable foliation (see Lemma \ref{lemma:egorov} below).

\subsubsection{The sharp Gårding inequality} 

The following lemma is a leafwise version of the sharp Gårding inequality. It will be key in the proof of Theorem \ref{theorem:main}. 

\begin{lemma}
\label{lemma:garding0}
Let $\mathbf{A}  \in \Psi^m_{h,\leaf}(M,\mc{F},E)$ be a principally diagonal operator such that $\sigma_{\mathbf{A}} = a \cdot \mathbf{1}_{E}$ with $\Re(a) \geq 0$. Then there exists $\mathbf{K} \in \Psi^{m-1}_{h,\leaf}(M,E)$ such that for all $L \in \mc{F}$, for all $u \in C^\infty_{\comp}(L)$:
\begin{equation}
\label{equation:garding-aprem}
\Re \langle \mathbf{A}_L u, u \rangle_{L^2(L,E)} \geq h \langle \mathbf{K}_L u, u \rangle_{L^2(L,E)}.
\end{equation}
In addition, if $\mc{F}$ is absolutely continuous (see Definition \ref{definition:foliation}), then for all $u \in C^\infty(M,E)$: 
\begin{equation}
\label{equation:garding-matin}
\Re \langle \mathbf{A} u, u \rangle_{L^2(M,E)} \geq h \langle \mathbf{K} u, u \rangle_{L^2(M,E)}.
\end{equation}
\end{lemma}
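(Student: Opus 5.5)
The plan is to reduce to the classical sharp Gårding inequality on $\R^d$ with a continuous parameter $x_1$, localizing with a partition of unity adapted to admissible charts. Fix a finite cover $M=\cup_i U_i$ by admissible charts $\Cchart_i$ trivializing $E$, and a partition of unity $\sum_i \chi_i^2 = 1$ with $\chi_i \in C^\infty_{\leaf}$ supported in $U_i$; squares are used so the localization is compatible with quadratic forms. Write $\mathbf{A} = \sum_i \chi_i \mathbf{A}\chi_i + \sum_i \chi_i[\chi_i,\mathbf{A}]$. By Proposition \ref{proposition:pseudo}, the commutator $[\chi_i,\mathbf{A}]$ lies in $h\Psi^{m-1}_{h,\leaf}$, since the principal symbols commute. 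So the second sum is of the form $h\mathbf{K}_0$ with $\mathbf{K}_0 \in \Psi^{m-1}_{h,\leaf}$, and it can be absorbed into $\mathbf{K}$. The same holds for the non-principal part of each localized operator.

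For each $i$, choose $\chi'_i$ equal to $1$ near $\supp \chi_i$. Then $\chi_i\mathbf{A}\chi_i = \chi_i(\chi'_i\mathbf{A}\chi'_i)\chi_i$, and in coordinates $\chi'_i\mathbf{A}\chi'_i = \Op^{\R^n}_{h,\leaf}(a_i)$ with $a_i = a\,\mathbf{1} + h b_i$, where $b_i \in S^{m-1}_{h,\leaf}$ and $\Re a \ge 0$. Again $hb_i$ is absorbed into $\mathbf{K}$. For the scalar part, on each plaque $\{x_1 = \mathrm{cst}\}$ I will apply the classical proof of sharp Gårding in $\R^d$ (e.g.\ via the anti-Wick/Gaussian wave-packet quantization, as in \cite{Zworski-12}). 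That proof gives $\Op_h(a) = \Op_h^{AW}(a) + h\Op_h(r)$, where $\Op^{AW}_h(a)$ has nonnegative real part and $r = r(x_1) \in S^{m-1}$ is given by explicit integral formulas in $a$ and finitely many $(x_2,\xi)$-derivatives.

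The main obstacle is making sure that $r$ is a genuine leafwise symbol: it must be continuous in $x_1$, with seminorm bounds uniform on compacts, so that $\Op^{\R^n}_{h,\leaf}(r)$ belongs to the calculus and defines a global $\mathbf{K}$. I would check this directly from the explicit formulas, which involve only $x_2,\xi$-derivatives and Gaussian convolutions in $(x_2,\xi)$, and these are continuous in the parameter $x_1$. A second issue is that the anti-Wick operator is not properly supported in $x_2$. This is handled by inserting the cutoffs $\chi_i$ on both sides: the Gaussian-tail part of the kernel away from the diagonal is $\mathcal{O}(h^\infty)$ in all leafwise derivatives, so after truncation near the diagonal it is residual. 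Gathering the terms gives
\[
\Re\, \mathbf{A} = \sum_i \chi_i \mathbf{P}_i \chi_i + h\,\mathbf{K},
\]
where each $\mathbf{P}_i$ has nonnegative real part leafwise and $\mathbf{K} \in \Psi^{m-1}_{h,\leaf}$. The identity holds modulo terms that are themselves of the form $h\mathbf{K}$, which we fold into $\mathbf{K}$; residual terms are in particular of this type.

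Restricting to a leaf $L$ via Lemma \ref{lemma:induced-operator} then gives \eqref{equation:garding-aprem} for $u \in C^\infty_{\comp}(L)$. On $L$ the adjoints are taken with respect to $\vol_L$; it should be checked that the positivity of $\mathbf{P}_i$ refers to the leaf density written in coordinates, which is smooth in $x_2$ and can be conjugated in with a further $h$-lower-order error. For \eqref{equation:garding-matin}, I use absolute continuity. By \eqref{equation:utile}, $\int_M = \int_{V_{x_1}}\int_{V_{x_2}} J\,\dd x_2\,\dd x_1$ locally. After a partition of unity, $\Re\langle \mathbf{A}u,u\rangle_{L^2(M)}$ is an $x_1$-integral of leafwise pairings with respect to the density $J(x_1,\cdot)\,\dd x_2$. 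Since $J$ is leafwise smooth and positive, this density differs from $\vol_L$ by a leafwise smooth factor, and conjugating by its square root changes $\mathbf{A}$ only by $h\Psi^{m-1}_{h,\leaf}$. Integrating the leafwise inequality over $x_1$ then yields \eqref{equation:garding-matin}, after adjusting $\mathbf{K}$.
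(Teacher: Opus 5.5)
Your proposal follows essentially the same route as the paper. You localize in admissible charts so that each piece is $\Op^{\R^n}_{h,\leaf}$ of a symbol with nonnegative real part. You then apply the Euclidean sharp Gårding inequality plaque by plaque with $x_1$ as a continuous parameter, and integrate in $x_1$ using the disintegration \eqref{equation:utile}. You are more careful than the paper on two points it passes over. First, $\mathbf{K}$ must be properly supported, which your two-sided cutoffs $\chi_i$ take care of. Second, Lebesgue measure $\dd x_2$, the density $J(x_1,\cdot)\,\dd x_2$ and $\vol_L$ do not coincide; you absorb this by conjugating with the square root of a positive leafwise smooth density, at the cost of an $h\Psi^{m-1}_{h,\leaf}$ error.

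One step, however, does not give what you claim. Take the isotropic anti-Wick quantization of \cite{Zworski-12}, with Gaussians of width $\sqrt h$ in both $x_2$ and $\xi$. It satisfies $a*\Gamma_h - a = \tfrac h4(\Delta_{x_2}+\Delta_\xi)a + \dots$, and for $a \in S^m_{h,\leaf}$ the term $\Delta_{x_2} a$ gains no decay in $\xi$; for example $a = \psi(x_2)\langle\xi\rangle^m$ with $0 \le \psi \in C^\infty_{\comp}$ non-constant. So this construction only gives $r \in S^m$, i.e. $\mathbf{K} \in \Psi^m_{h,\leaf}$, not $\Psi^{m-1}_{h,\leaf}$. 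To get the stated order you need the version of sharp Gårding adapted to Kohn--Nirenberg symbols. One option is wave packets of width $(h/\langle\xi\rangle)^{1/2}$ in $x_2$ and $(h\langle\xi\rangle)^{1/2}$ in $\xi$, for which the second-order Taylor error is $\mc{O}(h\langle\xi\rangle^{m-1})$. Equivalently, localize to dyadic shells $|\xi|\sim R$ and rescale $\xi = R\zeta$, so that $\Op_h(a) = R^m \Op_{h/R}(\tilde a)$ with $\tilde a$ bounded together with all its derivatives. Running the isotropic argument with parameter $h/R$ then produces an error of size $hR^{m-1}$. These formulas are still explicit in $(x_2,\xi)$ and continuous in $x_1$, so your parameter-continuity check carries over unchanged. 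The paper's citation of \cite[Theorem 4.32]{Zworski-12}, which concerns symbols in $S(1)$, has the same limitation. For the only use of the lemma, in Lemma \ref{lemma:garding}, a remainder of the same order as $\mathbf{A}$ would already suffice: the conjugated operator used there to bound $\langle \mathbf{K}_L u,u\rangle$ would have order $0$ instead of $-1$, and is still uniformly bounded on $L^2$ by Lemma \ref{lemma:uniform-bound}.
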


More generally, by a density argument, \eqref{equation:garding-aprem} and \eqref{equation:garding-matin} can be applied to distributions $u \in \mc{D}'(M,E)$ as soon as all the terms in the inequality are defined.

\begin{proof}
We prove the lemma in the case where $E$ is the trivial bundle; the extension to vector-valued operators is immediate. By \eqref{equation:decomposition}, we can write $\mathbf{A} = \Op_{h,\leaf}(a) + \mc{O}(h^\infty)$ where the remainder term is a residual operator. Hence, it suffices to prove the claim for $\Op_{h,\leaf}(a)$, assuming $a \geq 0$. By the quantization formula \eqref{equation:quantization-manifold}, it suffices to prove that each term in the sum defining the quantization is non-negative in the sense of quadratic form modulo a $\mc{O}(h)$ remainder. This implies verifying that the leafwise quantization \eqref{equation:leafwise-rn} in $\R^n$ satisfies the sharp Gårding inequality, that is
\[
\Re \langle \Op^{\R^n}_{h,\leaf}(b)u,u\rangle_{L^2(\R^n, \mu)} \geq h \langle \mathbf{K} u, u \rangle_{L^2(\R^n,\mu)},
\]
where $\mu$ denotes the pullback of the volume form $\vol$ to $\R^n$ in the admissible coordinate patch $V_{x_1} \times V_{x_2} \subset \R^n$, and $\Re(b) \geq 0$ is a leafwise symbol supported in the patch. By assumption, $\mc{F}$ is absolutely continuous and thus, by \eqref{equation:utile}, $\mu = J(x_1,x_2) \dd x_1 \dd x_2$. Writing $\mathbf{B} :=  \Op^{\R^n}_{h,\leaf}(b)$ and $L(x_1) := \{(x_1,x_2) ~:~ x_2 \in \R^d\}$, we find that:
\begin{equation}
\label{equation:faim0}
\Re \langle \mathbf{B} u,u\rangle_{L^2(\R^n, \mu)} = \int_{\R^{n-d}_{x_1}} \Re \langle  \mathbf{B} _{L(x_1)} u|_{L(x_1)}, u|_{L(x_1)}\rangle_{L^2(\R^d,J(x_1,\bullet)\dd x_2)} \dd x_1
\end{equation}
Observe that this is now merely a parametrized version of the standard sharp Gårding inequality in $\R^d$ with parameter $x_1 \in \R^{n-d}$, see \cite[Theorem 4.32]{Zworski-12} for a proof. We get that there exists $\mathbf{K} \in \Psi^{m-1}_{h,\leaf}$ such that for all $x_1 \in V_{x_1}$, for all $u \in C^\infty_{\comp}(V_{x_2})$,
\begin{equation}
\label{equation:faim}
\Re \langle  \mathbf{B} _{L(x_1)} u, u\rangle_{L^2(\R^d,J(x_1,\bullet)\dd x_2)} \geq h \langle\mathbf{K}_{L(x_1)} u, u \rangle_{L^2(\R^d,J(x_1,\bullet)\dd x_2)}.
\end{equation}
This proves \eqref{equation:garding-aprem}. Inserting \eqref{equation:faim} into \eqref{equation:faim0} finally proves \eqref{equation:garding-matin}.
\end{proof}

\subsection{Sobolev norms} \label{ssection:sobolev-norms}

We first introduce the leafwise Sobolev spaces on $M$. For simplicity, we only discuss the spaces of even order $\pm 2N$ where $N \in \Z_{\geq 0}$. The theory naturally extends to $N \in \R_{\geq 0}$ by interpolation, but we do not expand this point to avoid cumbersome discussions.

\subsubsection{Definition. First properties} Given $u \in C^\infty(M)$, define:
\[
\|u\|^2_{H^{2N}_{h,\leaf}(M)} := \|(1 + h^2\boldsymbol{\Delta}_{\mc{F}})^N u\|^2_{L^2(M)}.
\]
The space $H^{2N}_{h,\leaf}(M)$ is defined as the completion of $C^\infty(M)$ with respect to the previous norm. Note that
\begin{equation}
\label{equation:h-equivalence-normes}
h^{4N} \|u\|^2_{H^{2N}_{1,\leaf}(M)} \leq \|u\|^2_{H^{2N}_{h,\leaf}(M)} \leq \|u\|^2_{H^{2N}_{1,\leaf}(M)}
\end{equation}
which implies that the completions of $C^\infty(M)$ with respect to these norms are independent on $h$. In the following, we will denote by $\boldsymbol{\Delta}_L$ the restriction of the leafwise Laplacian $\boldsymbol{\Delta}_{\mc{F}}$ to the leaf $L \in \mc{F}$.

For a given $x \in M$, we also introduce the following leafwise norms (the second one is only a semi-norm):
\[
\begin{split}
\|u\|^2_{H^{2N}_h(L(x))} & := \|(1+h^2\boldsymbol{\Delta}_{L(x)})^N u\|^2_{L^2(L(x))}, \\
\|u\|^2_{\dot{H}^{2N}_h(L(x))} & := \|h^{2N}\boldsymbol{\Delta}_{L(x)}^N u\|^2_{L^2(L(x))}, \\
\|u\|^2_{\overline{H}^{2N}_h(L(x))} & := \|u\|^2_{L^2(L(x))} + \|u\|^2_{\dot{H}^{2N}_h(L(x))}.
\end{split}
\]
A similar inequality as \eqref{equation:h-equivalence-normes} also holds for $\|\bullet\|_{H^{2N}_h(L(x))}$. If $U \subset L(x)$ is an open subset, we may also write $\|u\|^2_{H^{2N}_h(U)} :=  \|(1+h^2\boldsymbol{\Delta}_{L(x)})^N u\|^2_{L^2(U)}$, and the same definition holds for the other norms. We first prove that the norms defining the spaces $H^{2N}_h$ and $\overline{H}^{2N}_h$ are uniformly equivalent, with a constant that is independent of $h > 0$ (and thus, the spaces coincide).

\begin{lemma}
\label{lemma:sobolev}
Let $N \geq 0$. There exists a constant $C_0 := C_0(N)$ such that for all $x \in M$, for all $u \in C^\infty_{\comp}(L(x))$:
\[
\|u\|^2_{\overline{H}^{2N}_h(L(x))} \leq \|u\|^2_{H^{2N}_h(L(x))} \leq C_0 \|u\|^2_{\overline{H}^{2N}_h(L(x))}.
\]
\end{lemma}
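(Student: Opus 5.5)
Since $\boldsymbol{\Delta}_L := \boldsymbol{\Delta}_{L(x)}$ is a nonnegative essentially self-adjoint operator on $L^2(L)$ (each leaf is complete with bounded geometry, uniformly in $L$), I will argue via the spectral calculus, leaf by leaf. This makes both inequalities pointwise inequalities between functions of one real variable, so the constant is automatically uniform in $x$ and $h$. First I check essential self-adjointness. The leaf $L$ with metric $g_L$ is complete by Definition \ref{definition:foliation}, so $\boldsymbol{\Delta}_L$ on $C^\infty_{\comp}(L)$ is essentially self-adjoint by Chernoff/Gaffney. I then write $\boldsymbol{\Delta}_L = \int_0^\infty \lambda \, \dd E(\lambda)$. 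For $u \in C^\infty_{\comp}(L)$, both $(1+h^2\boldsymbol{\Delta}_L)^N u$ and $h^{2N}\boldsymbol{\Delta}_L^N u$ are computed by applying the differential operator directly, which agrees with the functional calculus on this core. Hence
\[
\|u\|^2_{H^{2N}_h(L)} = \int_0^\infty (1+h^2\lambda)^{2N}\, \dd\langle E(\lambda)u,u\rangle, \qquad \|u\|^2_{\overline{H}^{2N}_h(L)} = \int_0^\infty \big(1+(h^2\lambda)^{2N}\big)\, \dd\langle E(\lambda)u,u\rangle .
\]

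Setting $s := h^2\lambda \geq 0$, it therefore suffices to prove the elementary inequality
\[
1 + s^{2N} \leq (1+s)^{2N} \leq C_0 \,(1+s^{2N}), \qquad s \geq 0.
\]
The left inequality follows from the binomial expansion, since all the terms are nonnegative. For the right inequality, convexity of $t \mapsto t^{2N}$ gives $(1+s)^{2N} \leq 2^{2N-1}(1+s^{2N})$, so $C_0 = 2^{2N-1}$ works when $N \geq 1$; the case $N=0$ is trivial. Integrating these inequalities against the spectral measure proves the lemma with a constant depending only on $N$.

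The only point needing care, and the main potential obstacle, is the justification that the spectral-calculus expressions coincide with the differential-operator definitions on noncompact leaves. This follows from essential self-adjointness on complete manifolds, which requires no curvature bounds. The argument never uses smoothness of $g_L$ in transverse directions. If one preferred to avoid spectral theory, an alternative would be to expand $(1+h^2\boldsymbol{\Delta}_L)^N$ binomially and integrate by parts to write $\|(1+h^2\boldsymbol{\Delta}_L)^Nu\|^2 = \sum_{j,k}\binom{N}{j}\binom{N}{k}\langle (h^2\boldsymbol{\Delta}_L)^{j+k}u,u\rangle$, where each term is nonnegative and bounded by interpolation $\langle \Delta^{m} u,u\rangle \leq \|u\|^{2-m/N}\|\Delta^N u\|^{m/N}$. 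This route is essentially the same argument.
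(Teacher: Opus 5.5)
Your argument is correct for $N\geq 1$, but it takes a different route from the paper. The paper never uses a self-adjoint realization of $\boldsymbol{\Delta}_L$. It only uses that for $u\in C^\infty_{\comp}(L)$ every iterate $\boldsymbol{\Delta}_L^j u$ stays compactly supported, so that $\langle \boldsymbol{\Delta}_L^a u,\boldsymbol{\Delta}_L^b u\rangle=\langle \boldsymbol{\Delta}_L^{a+b}u,u\rangle$ by integration by parts.

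\begin{itemize}
\item The left inequality is the same binomial expansion as yours, applied to $\langle (1+h^2\boldsymbol{\Delta}_L)^{2N}u,u\rangle$.
\item For the right inequality, the paper proves by induction, using Cauchy--Schwarz, the additive interpolation bound $\|h^{2j}\boldsymbol{\Delta}_L^j u\|^2\leq \|u\|^2+\|h^{2(j+1)}\boldsymbol{\Delta}_L^{j+1}u\|^2$. It chains this up to $j=N$ and uses it to control each cross term $\langle h^{2j}\boldsymbol{\Delta}_L^j u,u\rangle$ for $1\leq j\leq 2N-1$.
\end{itemize}
This is a hands-on version of the log-convexity of $j\mapsto \|h^{2j}\boldsymbol{\Delta}_L^j u\|$, which the spectral theorem gives immediately; your closing alternative is essentially this route.

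What your approach buys is a one-line reduction to the scalar inequality $1+s^{2N}\leq (1+s)^{2N}\leq 2^{2N-1}(1+s^{2N})$, with the explicit constant $C_0=2^{2N-1}$. The price is the functional-analytic step of identifying the differential-operator powers with the functional calculus. The paper's argument is entirely elementary and avoids that step.

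You also do not need completeness of the leaves or Chernoff--Gaffney. The Friedrichs extension $A$ of $\boldsymbol{\Delta}_L|_{C^\infty_{\comp}(L)}$ always exists. Since $\boldsymbol{\Delta}_L$ preserves $C^\infty_{\comp}(L)$, induction gives $u\in D(A^j)$ and $A^j u=\boldsymbol{\Delta}_L^j u$ for all $j$, which is all your computation uses.

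One correction: the case $N=0$ is not trivial; the left inequality is false there. With $\boldsymbol{\Delta}_L^0=\mathrm{Id}$, one has $\|u\|^2_{\overline{H}^{0}_h(L)}=2\|u\|^2_{L^2}$ while $\|u\|^2_{H^{0}_h(L)}=\|u\|^2_{L^2}$. Your scalar inequality reads $2\leq 1$ at $N=0$. The lemma should be read for $N\geq 1$. The paper's proof has the same blind spot: when $2N=0$, its splitting of the binomial sum counts the single term twice.
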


\begin{proof}
For the first inequality, observe that:
\begin{equation}
\label{equation:poubelle}
\begin{split}
\|u\|^2_{H^{2N}_h(L(x))} &= \|(1+h^2\boldsymbol{\Delta}_{L(x)})^N u\|^2_{L^2(W^u(x))} = \langle (1+h^2\boldsymbol{\Delta}_{L(x)})^{2N}u, u\rangle_{L^2(L(x))} \\
& = \sum_{j=0}^{2N} {2N\choose j} \langle h^{2j} \boldsymbol{\Delta}_{L(x)}^j u,u\rangle_{L^2(L(x))} \\
& = \underbrace{\|u\|^2_{L^2(L(x))} + \|h^{2N}\boldsymbol{\Delta}_{L(x)}^N u\|^2_{L^2(L(x))}}_{= \|u\|^2_{\overline{H}^{2N}_h(L(x))}} +  \sum_{j=1}^{2N-1} {2N\choose j} \underbrace{\langle h^{2j} \boldsymbol{\Delta}_{L(x)}^j u,u\rangle_{L^2(L(x))}}_{\geq 0}  \\
& \geq \|u\|^2_{\overline{H}^{2N}_h(L(x))}.
\end{split}
\end{equation}

Let us now establish the second inequality. First, we claim that for all $N\geq 0$ and $0 \leq j \leq N$, there exists $C := C(N,j) > 0$ such that:
\begin{equation}
\label{equation:afaire}
\|h^{2j}\boldsymbol{\Delta}^j_{L(x)} u\|^2_{L^2} \leq C(\|u\|^2_{L^2} + \|h^{2N}\boldsymbol{\Delta}^N_{L(x)} u\|^2_{L^2}).
\end{equation}
The estimate \eqref{equation:afaire} is immediate if we establish that for all $j \geq 1$:
\begin{equation}
\label{equation:afaire2}
\|h^{2j}\boldsymbol{\Delta}^j_{L(x)} u\|^2_{L^2} \leq \|u\|^2_{L^2} + \|h^{2(j+1)}\boldsymbol{\Delta}^{j+1}_{L(x)} u\|^2_{L^2}.
\end{equation}
Let us prove \eqref{equation:afaire2} by induction on $j \geq 1$. For $j=1$, notice that, by the Cauchy-Schwarz inequality:
\[
\begin{split}
\|h^{2}\boldsymbol{\Delta}_{L(x)} u\|^2_{L^2} & = \langle h^4\boldsymbol{\Delta}^2_{L(x)}u,u\rangle_{L^2}  \leq \|h^4\boldsymbol{\Delta}^2_{L(x)}u\|_{L^2}\|u\|_{L^2} \\
& \leq \dfrac{1}{2}\left(\|u\|^2_{L^2} + \|h^4\boldsymbol{\Delta}^2_{L(x)}u\|^2_{L^2}\right).
\end{split}
\]
Let us now proceed with the induction step. Suppose that \eqref{equation:afaire} holds for $j \geq 1$. Applying Cauchy-Schwarz as above:
\[
\begin{split}
 \|h^{2(j+1)}\boldsymbol{\Delta}^{j+1}_{L(x)} u\|^2_{L^2} & = \langle h^{4(j+1)}\boldsymbol{\Delta}^{2(j+1)}u,u\rangle_{L^2} = \langle h^{2(j+2)} \boldsymbol{\Delta}^{j+2}u, h^{2j}\boldsymbol{\Delta}^{j} u \rangle_{L^2} \\
 & \leq \|h^{2(j+2)} \boldsymbol{\Delta}^{j+2}u\| \|h^{2j} \boldsymbol{\Delta}^j u\|_{L^2}  \\
 &\leq \dfrac{1}{2} \left(\|h^{2(j+2)} \boldsymbol{\Delta}^{j+2}u\|^2_{L^2} +\|h^{2j} \boldsymbol{\Delta}^{j} u\|^2_{L^2}\right) \\
 & \leq \dfrac{1}{2} \left(\|h^{2(j+2)} \boldsymbol{\Delta}^{j+2}u\|^2_{L^2} + \|u\|^2_{L^2} + \|h^{2(j+1)} \boldsymbol{\Delta}^{j+1} u\|^2_{L^2} \right) ,
 \end{split}
\]
where we have applied the induction assumption in the last line. The last inequality can then be rearranged in the following form:
\[
\|h^{2(j+1)} \boldsymbol{\Delta}^{j+1}_{L(x)} u\|^2_{L^2} \leq \|u\|^2_{L^2} + \|h^{2(j+2)}\boldsymbol{\Delta}^{j+2}_{L(x)} u\|^2_{L^2},
\]
which finally proves \eqref{equation:afaire2}.

Now, observe that for $1 \leq j \leq N$, applying successively Cauchy-Schwarz and \eqref{equation:afaire}:
\[
\begin{split}
\langle h^{2j} \boldsymbol{\Delta}_{L(x)}^j u,u\rangle_{L^2(L(x))} &\leq \dfrac{1}{2}\left(\|h^{2j} \boldsymbol{\Delta}_{L(x)}^j u\|_{L^2}^2+\|u\|^2_{L^2}\right) \\
& \leq C\left( \|u\|^2_{L^2} + \|h^{2N}\boldsymbol{\Delta}^N_{L(x)} u\|^2_{L^2}\right) \\
&= C\|u\|^2_{\overline{H}^{2N}_h(L(x))}.
\end{split}
\]
for some constant $C > 0$. Similarly, if $N \leq j \leq 2N$, writing $j=N+j'$ and applying the same argument, we find:
\[
\begin{split}
\langle h^{2j} \boldsymbol{\Delta}_{L(x)}^j u,u\rangle_{L^2(W^u(x))} & = \langle h^{2N} \boldsymbol{\Delta}_{L(x)}^N u,h^{2j'} \boldsymbol{\Delta}_{L(x)}^{j'}u\rangle_{L^2(L(x))} \\
& \leq \dfrac{1}{2}\left(\|h^{2N} \boldsymbol{\Delta}_{L(x)}^N u\|_{L^2} + \|h^{2j'} \boldsymbol{\Delta}_{L(x)}^{j'}u\|^2_{L^2} \right) \\
& \leq C \|u\|^2_{\overline{H}^{2N}_h(L(x))},
\end{split}
\]
for some constant $C > 0$. Inserting the previous two estimates in \eqref{equation:poubelle} then proves the second inequality.
\end{proof}

Similar norms can be defined for sections of a (leafwise) smooth vector bundle $E \to M$; it suffices to replace $\boldsymbol{\Delta}_L$ by $\boldsymbol{\Delta}^E_L$ (defined in \eqref{equation:laplacien}) in the paragraphs above. Finally, we will need the following:

\begin{lemma}\label{lemma-quickcomputation} 
Let $\chi \in C^\infty_{\comp}(L)$ be a smooth, nonnegative, compactly supported function on a leaf $L$. Let $(\chi_t)_{t\in [0,1]}$ be a continuous family of smooth, non-negative functions so that $\chi_t \leq \chi$ and so that $\chi_t$ is supported in $\{\chi=1\}$. Then, there exists $C > 0$ so that for every $f \in H^{2N}_h(L)$ and $t \in [0,1]$, one has
$$ \| \chi_t f \|_{H^{2N}_h(L)} \leq C \| \chi f \|_{H^{2N}_h(L)}. $$
\end{lemma}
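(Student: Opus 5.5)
The key observation is that $\chi_t = \chi_t \chi$, because $\chi_t$ is supported in $\{\chi = 1\}$. The estimate therefore reduces to a multiplication bound: the operator $g \mapsto \chi_t g$ should be bounded on $H^{2N}_h(L)$, uniformly in $t \in [0,1]$ and in $h$. Applying this to $g = \chi f$ gives
\[
\|\chi_t f\|_{H^{2N}_h(L)} = \|\chi_t (\chi f)\|_{H^{2N}_h(L)} \leq C \|\chi f\|_{H^{2N}_h(L)}.
\]

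To prove the multiplication bound, I would use Lemma \ref{lemma:sobolev}. It lets us replace $\|\cdot\|_{H^{2N}_h(L)}$ by the equivalent norm $\|g\|_{L^2}^2 + \|h^{2N}\boldsymbol{\Delta}_L^N g\|_{L^2}^2$, with constants independent of $h$.

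The $L^2$ term is immediate, since $0 \leq \chi_t \leq \chi \leq \sup\chi$. For the other term, expand $h^{2N}\boldsymbol{\Delta}_L^N(\chi_t g)$ with the Leibniz rule. It is a finite sum of terms of the form $h^{|\beta|}(\nabla^\alpha_{\mc{F}}\chi_t)\cdot h^{|\beta|}\nabla^\beta_{\mc{F}} g$, with $|\alpha|+|\beta| = 2N$. The coefficients are contracted with smooth tensors built from the metric on $L$, and the extra powers $h^{|\alpha|} \leq 1$ have been discarded. All of this happens on the compact set $\supp\chi$, so the curvature terms are bounded.

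Each factor $h^{k}\nabla^{k}_{\mc{F}} g$ with $k \leq 2N$ must then be controlled in $L^2(\supp\chi)$ by $\|g\|_{H^{2N}_h(L)}$. I would do this by interpolation, as in the proof of Lemma \ref{lemma:sobolev}. Alternatively, pass to finitely many coordinate charts covering $\supp \chi$ and note that $h^{k}\nabla^k \circ (1+h^2\boldsymbol{\Delta}_L)^{-N}$ is a semiclassical operator of order $k-2N \leq 0$, hence uniformly bounded on $L^2$; a parametrix as in Lemma \ref{lemma:parametrix} handles this modulo a residual term. Since $\chi_t$ is supported inside $\supp\chi$, it suffices to work on that fixed compact set.

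The constant is uniform in $t$ provided $\sup_{t\in[0,1]}\|\chi_t\|_{C^{2N}} < \infty$. This is the main point to watch. Continuity of $t \mapsto \chi_t$ in the $C^\infty$ (or at least $C^{2N}$) topology, which is the natural reading of ``continuous family of smooth functions'', gives this bound by compactness of $[0,1]$. If the family were only continuous in $C^0$, the lemma could fail, so I would state explicitly that this is how the hypothesis is interpreted.

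With that bound in hand, the Leibniz expansion yields $\|\chi_t g\|_{H^{2N}_h} \leq C\|g\|_{H^{2N}_h}$. Here $C$ depends only on $N$, on $\sup_t\|\chi_t\|_{C^{2N}}$, and on the geometry of $L$ near $\supp\chi$. Taking $g = \chi f$ concludes the proof.
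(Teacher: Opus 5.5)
Your proposal is correct and follows essentially the same route as the paper: use $\chi_t=\chi_t\chi$, expand by the Leibniz rule to get a bound with constant controlled by $\sum_{|\alpha|\le 2N}\|\partial^\alpha\chi_t\|_{L^\infty}$, and conclude by continuity of this quantity for $t\in[0,1]$. Your extra care about uniformity in $h$, and about reading ``continuous family'' as continuity in the $C^{2N}$ topology, makes explicit what the paper's short argument uses implicitly.
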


\begin{proof}
Since $\chi_t \chi = \chi_t$, we may use Leibnitz rule to get 
\[
\begin{split}
\|\chi_t f\|_{H^{2N}} &
\leq C \sum_{|\alpha|\le 2N}
\|\partial^\alpha \chi_t\|_{L^\infty}
\|\chi f\|_{H^{2N}} \leq C_t \|\chi f\|_{H^{2N}},
\end{split}
\]
where $C_t := C \sum_{|\alpha|\le 2N} \|\partial^\alpha \chi_t\|_{L^\infty}$. The constant $C_t > 0$ varies continuously for $t \in [0,1]$ so there is a uniform constant as claimed.
\end{proof}

\subsubsection{Dual spaces} We can now introduce the dual spaces $H^{-2N}_h(W^u(x))$ for $N \geq 0$. Given an open subset $U \subset W^u(x)$, we define for $u \in C^\infty(W^u(x))$:
\[
\|u\|_{H^{-2N}_h(U)} := \sup_{\substack{\varphi \in C^\infty_{\comp}(U),\\\|\varphi\|_{H^{2N}_h(U)}=1}} (u, \varphi \mu_x).
\]
We then define $H^{-2N}_h(U)$ as the completion of $C^\infty(U)$ with respect to this norm. Note that, by construction, for $u \in H^{-2N}_h(U)$ and $v \in H^{2N}_{h,\comp}(U)$, one has:
\begin{equation}
\label{equation:fourche}
\langle u,v \rangle_{L^2(L)} = (u, \overline{v} \mu_x) \leq \|u\|_{H^{-2N}_h(U)} \|v\|_{H^{2N}_h(U)} \leq \dfrac{1}{2}\left( \|u\|_{H^{-2N}_h(U)}^2 + \|v\|_{H^{2N}_h(U)}^2 \right).
\end{equation}

\subsubsection{$L^2$-boundedness}

In what follows, we shall only need a (very) weak version of the Calderon-Vaillancourt theorem (see \cite[Theorem 13.13]{Zworski-12} for instance), asserting that semiclassical pseudodifferential operators of order $0$ are uniformly bounded, independently of $h > 0$:

\begin{lemma}
\label{lemma:uniform-bound}
Let $\mathbf{A} \in \Psi^0_{h,\leaf}(M,\mc{F})$. Then there exists $C > 0$ such that for all leaves $L \in \mc{F}$, $\|\mathbf{A}_L\|_{L^2(L) \to L^2(L)} \leq C$. In addition, if $\mc{F}$ is absolutely continuous (see Definition \ref{definition:foliation}), then $\|\mathbf{A}\|_{L^2(M) \to L^2(M)} \leq C$.
If $\mathbf{K}$ is residual, then $\|\mathbf{K}_L\|_{L^2(L) \to L^2(L)} =\mc{O}(h^\infty)$ uniformly in $L \in \mc{F}$, and $\|\mathbf{K}\|_{L^2(M) \to L^2(M)}= \mc{O}(h^\infty)$ when $\mc{F}$ is absolutely continuous.
\end{lemma}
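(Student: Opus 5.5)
The plan is to reduce to the parametrized Euclidean setting, exactly as in the proofs of Lemma \ref{lemma:induced-operator} and Lemma \ref{lemma:garding0}. Using \eqref{equation:decomposition}, I write $\mathbf{A} = \Op_{h,\leaf}(a) + \mathbf{R}$ with $a \in S^0_{h,\leaf}(T^*\mc{F})$ and $\mathbf{R}$ residual. Then I treat the two pieces separately.

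\textbf{Residual piece.} On each leaf $L$, the operator $\mathbf{R}_L$ has kernel $K_h(x,y)$ supported in $\{d_{L}(x,y) \leq C\}$ and bounded by $\mc{O}(h^\infty)$ uniformly in $x,y$. I apply Schur's test. This needs $\sup_x \int_{L(x)} |K_h(x,y)|\,\dd\vol(y)$ and the symmetric quantity $\sup_y \int |K_h(x,y)|\,\dd\vol(x)$ to be $\mc{O}(h^\infty)$.

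That reduces to a uniform bound on the leafwise volume of leafwise balls of radius $C$. This bound holds because the leaf metrics are induced by $g$ on a compact $M$. Concretely, leafwise balls of a fixed radius are covered by a uniformly bounded number of admissible plaques, whose volumes are controlled by the continuity of $\partial_{x_2}\Cchart$ on compact sets. This gives $\|\mathbf{R}_L\|_{L^2(L)\to L^2(L)} = \mc{O}(h^\infty)$ uniformly in $L$.

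\textbf{Quantized piece.} By \eqref{equation:quantization-manifold}, $\Op_{h,\leaf}(a)$ is a finite sum of terms $\chi'_i (\Cchart_i^{-1})^* \Op^{\R^n}_{h,\leaf}(b_i)\, \Cchart_i^* \chi'_i$, with $b_i = \Cchart_i^*(\chi_i a)$ compactly supported in $x$. Restricted to a leaf $L$, each term acts plaque by plaque. On the plaque $\{x_1 = \mathrm{cst}\}$ it is the standard quantization of $b_i(x_1,\cdot,\cdot)$ in $\R^d$.

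The standard Calderón–Vaillancourt theorem (\cite[Theorem 4.23]{Zworski-12}) bounds the $L^2(\R^d,\dd x_2)$ norm of this operator by finitely many seminorms $\sup |\partial^\beta_{x_2}\partial^\alpha_\xi b_i|$. By \eqref{equation:bounds-symbol}, these seminorms are uniform in $x_1$ on the compact support. To pass to the leafwise measure, I compare $L^2(\R^d,\dd x_2)$ with $L^2$ of the leaf volume $\alpha(x_1,x_2)\dd x_2$. The density $\alpha$ is bounded above and below uniformly on compacts, by continuity as in the proof of Lemma \ref{lemma:leafwise-density}.

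The step I expect to be the main obstacle is summing over plaques. A given leaf $L$ may meet $U_i$ in infinitely many plaques (countably many values of $x_1$). However, the term is block-diagonal with respect to this decomposition, since $\chi'_i$ is supported inside $U_i$ and the operator never mixes plaques. So the norm on $L^2(L\cap U_i)$ is the supremum of the plaque norms, which is uniformly bounded. Summing the finitely many $i$ gives the uniform bound $C$.

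\textbf{Global statement.} When $\mc{F}$ is absolutely continuous, I use \eqref{equation:utile} to disintegrate the $L^2(M)$ norm in each chart as $\int \|\cdot\|^2_{L^2(J(x_1,\cdot)\dd x_2)}\dd x_1$, exactly as in \eqref{equation:faim0}. Applying the uniform leafwise bound for each $x_1$ and integrating gives $\|\mathbf{A}\|_{L^2(M)\to L^2(M)}\leq C$. It also gives $\mc{O}(h^\infty)$ for residual operators, after localizing the residual kernel with a partition of unity into finitely many charts.
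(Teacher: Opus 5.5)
Your argument is correct, but for the quantized piece it takes a different route from the paper. The paper bounds $\Op_{h,\leaf}(a)$ using H\"ormander's square-root trick. It writes $C_0^2 = \Op_{h,\leaf}(a)^*\Op_{h,\leaf}(a) + \Op_{h,\leaf}(b_N)^*\Op_{h,\leaf}(b_N) + h^N \Op_{h,\leaf}(r_N)$ with $r_N \in S^{-N}_{h,\leaf}$. Then only $\Op_{h,\leaf}(r_N)$, for $N>d$ where the kernel is continuous, needs a Schur-type bound. Residual operators are handled by Cauchy--Schwarz and the uniform volume of leafwise balls.

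You instead apply the Euclidean Calder\'on--Vaillancourt theorem as a black box to the family $\Op_h(b_i(x_1,\cdot,\cdot))$, with seminorms uniform in $x_1$. Your remark that each chart term is block-diagonal over the possibly infinitely many plaques of $L\cap U_i$ is exactly what makes this legitimate. This avoids the composition and adjoint calculus entirely, including the mismatch $(\mathbf{A}_L)^*\neq(\mathbf{A}^*)_L$ between leafwise and global adjoints, which the paper's trick has to live with. The same chart-by-chart bound then disintegrates directly into the $L^2(M)$ estimate. In exchange, the paper's route gives the sharper constant $\sup|\sigma_{\mathbf{A}}|+o(1)$ mentioned after the statement, which a seminorm bound cannot give.

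Two points deserve more care than you give them; the paper passes over both as well.
\begin{itemize}
\item Comparing $J(x_1,\cdot)\,\dd x_2$ with the leaf density on compact sets uses $J>0$.
\item For the global residual bound, the decomposition $\sum_{i,j}\chi_i\mathbf{K}\chi_j$ does not make the kernel chart-local when the support radius $C$ exceeds the chart size. After applying your leafwise Schur bound for each $x_1$, you must still bound $\int \|u\|^2_{L^2(\{y : d_L(y,P_{x_1})\leq C\})}\,\dd x_1$ by $\|u\|^2_{L^2(M)}$, where $P_{x_1}$ is the plaque. This requires that holonomy between plaques at bounded leafwise distance distorts the transverse measure $\dd x_1$ by at most a bounded factor. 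That property can be derived from \eqref{equation:utile} on chart overlaps, using $J>0$.
\end{itemize}
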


We emphasize that a much stronger result actually holds, although we will not need it. Namely for $\mathbf{A} \in \Psi^0_{h,\leaf}(M,\mc{F})$, it is possible to establish that there exists $C > 0$ such that
\[
\|\mathbf{A}\|_{L^2(M) \to L^2(M)} \leq \|\sigma_{\mathbf{A}}\|_{L^\infty(T^*\mc{F})} + Ch,
\]
and for all $L \in \mc{F}$
\[
\|\mathbf{A}_L\|_{L^2(L) \to L^2(L)} \leq \|\sigma_{\mathbf{A}}|_L\|_{L^\infty(T^*\mc{F}|_L)} + Ch.
\]
More generally, operators of order $m \in \R$ are (uniformly) bounded between any Sobolev space $H^{s+m}_h(L) \to H^s_h(L)$ for $s \in \R$; we do not prove it as it will not be used in what follows. The proof of Lemma \ref{lemma:uniform-bound} uses the fact that pseudodifferential operators in $\Psi^m_{h,\leaf}(M,\mc{F})$ (in particular, residual operators) have uniformly bounded off-diagonal support.

\begin{proof}
We first prove the claim for residual operators. Let $\mathbf{K}$ be residual. Consider $M = \cup_{i} U_i$, a cover of $M$ by admissible charts (see \eqref{equation:admissible-coordinates}), and $\mathbf{1} = \sum_{i} \chi_i$ a partition of unity subordinated to this cover. We decompose $\mathbf{K} = \sum_{i} \chi_i \mathbf{K}$. It suffices to prove that each term in the sum is $\mc{O}(h^\infty)$ as an operator on $L^2(M)$. For that, in admissible coordinates, under the appropriate identifications, using the definition of residual operators (see Definition \ref{definition:residual}) and the fact that the foliation is absolutely continuous (see \eqref{equation:utile}), we have:
\[
\begin{split}
\|\chi_i \mathbf{K} u\|^2_{L^2(M)} & = \int_{V_{x_1}} \int_{V_{x_2}} |\chi_i \mathbf{K} u(x)|^2 J(x) \dd x_2 \dd x_1 \\
& =  \int_{V_{x_1}} \int_{V_{x_2}} \left|\chi_i \int_{L(x)} K(x,y) u(y) \dd \vol(y)\right|^2 J(x) \dd x_2 \dd x_1 \\
& \leq h^N \int_{V_{x_1}} \int_{V_{x_2}} \int_{y \in L(x) : d_{L(x)}(y,x) \leq C} |u(y)|^2 \dd \vol(y) J(x) \dd x_2 \dd x_1,
\end{split}
\]
where $N > 0$ is arbitrary, and we have used in the last line that the support of $K(x,\bullet)$ is contained in $\{y \in L(x) ~:~ d_{L(x)}(y,x) \leq C\}$ for some uniform constant $C > 0$ (in particular, it has finite and uniformly bounded volume), and that the $C^0$-norm of $K$ is $\mc{O}(h^\infty)$, uniformly on each leaf. In turn, writing $u = \sum_i \chi_i u$, decomposing $u$ on each patch of admissible coordinates, the last term in the previous sequence of inequalities is easily to be bounded by $\leq \|u\|^2_{L^2(M)}$. This proves the claim.

Now, consider an operator $\mathbf{A} = \Op_{h,\leaf}(a) + \mathbf{K} \in \Psi^0_{h,\leaf}(M,\mc{F})$. By the previous paragraph, it suffices to estimate the $L^2$-norm of the operator $\Op_{h,\leaf}(a)$ and we follow Hörmander's standard trick for that. We let $C_0 > \sup_{(x,\xi) \in T^*\mc{F}} |\sigma_{\mathbf{A}}(x,\xi)|$. For all $N \geq 0$, using the basic properties of the calculus, it is possible to prove (by induction on $N$) that
\[
C_0^2 = \Op_{h,\leaf}(a)^*\Op_{h,\leaf}(a) + \Op_{h,\leaf}(b_N)^*\Op_{h,\leaf}(b_N) + h^N \Op_{h,\leaf}(r_N),
\]
where $b_N \in S^0_{h,\leaf}(T^*\mc{F})$ and $r_N \in S^{-N}_{h,\leaf}(T^*\mc{F})$. We then have:
\[
\begin{split}
\|\Op_{h,\leaf}(a)u\|^2_{L^2(M)} & = \langle  \Op_{h,\leaf}(a)^*\Op_{h,\leaf}(a)u,u\rangle_{L^2(M)} \\
& = C_0^2 \|u\|^2_{L^2(M)} - \|\Op_{h,\leaf}(b_N)u\|^2_{L^2(M)} - h^N \langle  \Op_{h,\leaf}(r_N)u,u\rangle_{L^2(M)} \\
& \leq C_0^2\|u\|^2_{L^2(M)} + h^N\|\Op_{h,\leaf}(r_N)u\|_{L^2(M)}\|u\|_{L^2(M)}.
\end{split}
\]
It thus suffices to prove that for $N > 0$ large enough, $\|\Op_{h,\leaf}(r_N)\|_{L^2(M) \to L^2(M)} \leq C$ for some uniform constant $C > 0$. This can be done for $N > d$ by following the proof of the previous paragraph (based on Schur's lemma) and using the explicit expression \eqref{equation:leafwise-rn} for the kernel of the operator $\Op_{h,\leaf}(r_N)$ in each admissible coordinate patch (notice that for $N > d$, the Schwartz kernel is a continuous function on each leaf). The same arguments also apply to bound the operators restricted to each leaf $L \in \mc{F}$. This proves the claim. 
\end{proof}

\subsubsection{A useful consequence of sharp Gårding}

In the following, we will apply the sharp Gårding inequality (Lemma \ref{lemma:garding0}) in a specific form involving two operators. We record it here for later use. Recall that $\mathbf{A} \in \Psi^m_{h,\mathrm{leaf}}(M,E)$ is principally diagonal if $\sigma_{\mathbf{A}} = a \cdot \mathbf{1}_E$ (see the end of \S\ref{sssection:main-prop}). The following lemma allows us to carry out the computations in the proofs of the main theorems as if the foliation action were linear (serving as a substitute for normal form coordinates), with an error controlled by the parameter $h > 0$. Later, $h > 0$ will be chosen small enough.

\begin{lemma}
\label{lemma:garding}
Let $\mathbf{A}, \mathbf{B} \in \Psi^{2m}_{h,\mathrm{leaf}}(M,E)$, with $m \in \Z_{\geq 0}$, be two principally diagonal operators such that $0 \leq \sigma_{\mathbf{A}} \leq \sigma_{\mathbf{B}}$. Then there exists $C > 0$ such that for all leaves $L \in \mc{F}$, for all $u \in C^\infty_{\comp}(L,E)$:
\[
\|\mathbf{A}_L u\|^2_{L^2(L,E)} \leq \|\mathbf{B}_L u\|^2_{L^2(L,E)} + C h \|u\|^2_{H^{2m}_{h}(L,E)}
\]
In addition, if $\mc{F}$ is absolutely continuous, for all $u \in C^\infty(M,E)$:
\[
\|\mathbf{A} u\|^2_{L^2(M,E)} \leq \|\mathbf{B} u\|^2_{L^2(M,E)} + C h \|u\|^2_{H^{2m}_{h,\mathrm{leaf}}(M,E)}.
\]
\end{lemma}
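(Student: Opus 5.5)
The plan is to reduce the statement to the sharp Gårding inequality (Lemma \ref{lemma:garding0}) applied to the difference $\mathbf{B}^*\mathbf{B}-\mathbf{A}^*\mathbf{A}$. Write
\[
\|\mathbf{B}_L u\|^2 - \|\mathbf{A}_L u\|^2 = \langle ((\mathbf{B}_L)^*\mathbf{B}_L - (\mathbf{A}_L)^*\mathbf{A}_L) u, u\rangle_{L^2(L,E)} .
\]
By the remark following Proposition \ref{proposition:pseudo}, there exist $\mathbf{A}', \mathbf{B}' \in \Psi^{2m}_{h,\leaf}(M,\mc{F},E)$ with $\mathbf{A}'_L = (\mathbf{A}_L)^*$, $\mathbf{B}'_L = (\mathbf{B}_L)^*$ and principal symbols $\overline{\sigma_{\mathbf{A}}}$, $\overline{\sigma_{\mathbf{B}}}$. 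By the composition rule, $\mathbf{P} := \mathbf{B}'\mathbf{B} - \mathbf{A}'\mathbf{A} \in \Psi^{4m}_{h,\leaf}(M,\mc{F},E)$ is principally diagonal with symbol $\sigma_{\mathbf{P}} = (|b|^2-|a|^2)\mathbf{1}_E$, where $\sigma_{\mathbf{A}} = a\mathbf{1}_E$ and $\sigma_{\mathbf{B}} = b\mathbf{1}_E$. We need a fixed representative pair with $0 \le a \le b$, so that $|b|^2-|a|^2 \ge 0$ pointwise. Applying Lemma \ref{lemma:garding0} to $\mathbf{P}$ gives $\mathbf{K} \in \Psi^{4m-1}_{h,\leaf}$ with
\[
\|\mathbf{B}_L u\|^2 - \|\mathbf{A}_L u\|^2 = \Re\langle \mathbf{P}_L u,u\rangle \ge h\langle \mathbf{K}_L u,u\rangle .
\]

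It then remains to bound $|\langle \mathbf{K}_L u, u\rangle| \le C\|u\|^2_{H^{2m}_h(L)}$ uniformly in $L$. Since $\mathbf{K}$ has order $4m-1 \le 4m$, I would factor it through the Laplacian. Take $\mathbf{Q}$ a parametrix of $(1+h^2\boldsymbol{\Delta}^E_{\mc{F}})^m$, given by Lemma \ref{lemma:parametrix}, on the appropriate side. Then
\[
\mathbf{K} = \mathbf{Q}'^{*}\,\mathbf{K}_0\,\mathbf{Q}\,(1+h^2\boldsymbol{\Delta}^E_{\mc{F}})^m + \text{residual}
\]
with $\mathbf{K}_0 \in \Psi^{0}_{h,\leaf}$ (using $\mathbf{K}_0 := \mathbf{Q}_{\mathrm{left}}^* \mathbf{K} \mathbf{Q}$-type compositions). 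More simply, write
\[
\langle \mathbf{K}_L u,u\rangle = \langle \mathbf{K}_0 (1+h^2\boldsymbol{\Delta}_L)^m u, (1+h^2\boldsymbol{\Delta}_L)^m u\rangle + \mathcal{O}(h^\infty)\|u\|^2_{L^2}
\]
and invoke Lemma \ref{lemma:uniform-bound} for the $L^2$-boundedness of the order $0$ operator and the residual part, uniformly in leaves. This yields the leafwise claim with $C$ independent of $L$.

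For the global statement on $M$ with $\mc{F}$ absolutely continuous, the same argument applies verbatim using \eqref{equation:garding-matin} and the global part of Lemma \ref{lemma:uniform-bound}. One must use the genuine adjoints $\mathbf{A}^*, \mathbf{B}^*$ in $L^2(M)$, which lie in the calculus by Proposition \ref{proposition:pseudo}.

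The main obstacle is the bookkeeping with the non-properly-supported inverse of $1+h^2\boldsymbol{\Delta}$ (Remark \ref{remark:support}). Only parametrices are available, so residual errors must be tracked and shown to be $\mathcal{O}(h^\infty)\|u\|_{L^2}^2$ uniformly in $L$. A secondary subtlety is the distinction between $(\mathbf{A}_L)^*$ and $(\mathbf{A}^*)_L$. I would handle it by working leafwise with the auxiliary operators $\mathbf{A}',\mathbf{B}'$, which have the correct principal symbols.
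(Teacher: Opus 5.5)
Your proposal is correct and follows essentially the same route as the paper. You apply the sharp Gårding inequality (Lemma \ref{lemma:garding0}) to the difference of the squares, then control the order-$(4m-1)$ remainder by sandwiching it between a parametrix of $(1+h^2\boldsymbol{\Delta})^m$ and its adjoint and invoking Lemma \ref{lemma:uniform-bound}. Your use of the leafwise adjoints $\mathbf{A}',\mathbf{B}'$ for the leafwise inequality is slightly more careful than the paper: the paper uses $\mathbf{B}^*\mathbf{B}-\mathbf{A}^*\mathbf{A}$ for both statements and thereby tacitly identifies $(\mathbf{A}^*)_L$ with $(\mathbf{A}_L)^*$, which it itself warns is false in general.
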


The lemma actually holds for all $m \in \R$ but do not provide a proof here. Additionally, the remainder term in both inequalities of Lemma \ref{lemma:garding} is suboptimal and could be improved to $\|u\|^2_{H^{2m-1/2}_{h,\mathrm{leaf}}(M,E)}$. We indicate in the proof where the loss occurs; however, for applications, we do not need this gain of $1/2$ of Sobolev regularity. Once again, we abuse notation here by identifying $\sigma_{\mathbf{A}}$ with a scalar function.

\begin{proof}
The principal symbol of $\mathbf{C} := \mathbf{B}^*\mathbf{B} - \mathbf{A}^*\mathbf{A} \in \Psi^{4m}_{h,\leaf}(M,E)$ satisfies
\begin{equation}
\label{equation:crux}
\sigma_{\mathbf{C}}(x,\xi) = \sigma_{\mathbf{B}}^2(x,\xi) - \sigma_{\mathbf{A}}^2(x,\xi) \geq 0.
\end{equation}
Applying Lemma \ref{lemma:garding0}, we obtain the existence of $\mathbf{K} \in \Psi^{4m-1}_{h,\leaf}(M,E)$ such that $\mathbf{C} + h \mathbf{K} \geq 0$ in the sense of quadratic form on $L^2(M,E)$ and $\mathbf{C}_L + h \mathbf{K}_L \geq 0$ on $L^2(L,E)$ for all leaves $L \in \mc{F}$. Observing that $\langle \mathbf{C} u,u\rangle_{L^2(M,E)} = \|\mathbf{B}u\|^2_{L^2(M,E)} - \|\mathbf{A}u\|^2_{L^2(M,E)}$, we then find that
\[
\|\mathbf{A}_L u\|^2_{L^2(L,E)} \leq \|\mathbf{B}_L u\|^2_{L^2(L,E)} + h \langle \mathbf{K}_L u, u\rangle_{L^2(L,E)},
\]
and
\[
\|\mathbf{A} u\|^2_{L^2(M,E)} \leq \|\mathbf{B} u\|^2_{L^2(M,E)} +  h \langle \mathbf{K} u, u\rangle_{L^2(M,E)}.
\]
It remains to bound:
\begin{equation}
\label{equation:marcelle}
\langle \mathbf{K} u, u\rangle_{L^2(L,E)} \leq C\|u\|^2_{H^{2m}_h(L,E)}, \quad \langle \mathbf{K} u, u\rangle_{L^2(M,E)} \leq C\|u\|^2_{H^{2m}_{h,\mathrm{leaf}}(M,E)},
\end{equation}
for some uniform constant $C > 0$. 

Let us consider a parametrix $\mathbf{B} \in \Psi^{-2m}_{h,\leaf}(M,\mc{F}, E \to E)$ for the operator $(1+h^2\boldsymbol{\Delta})^m$ (see Lemma \ref{lemma:parametrix}) such that
\[
\mathbf{B} (1+h^2\boldsymbol{\Delta})^m = 1 + \mathbf{S},
\]
where $\mathbf{S}$ is residual. Considering the restriction to the leaf $L$, and taking the adjoint, we have:
\[
\mathbf{B}_L (1+h^2\boldsymbol{\Delta}_L)^m = 1 + \mathbf{S}_L, \qquad  (1+h^2\boldsymbol{\Delta}_L)^m (\mathbf{B}_L)^* = 1 + (\mathbf{S}_L)^*,
\]
where both operators $\mathbf{S}_L$ and $(\mathbf{S}_L)^*$ are $\mc{O}(h^\infty)$ and uniformly smoothing.

We then write: 
\[
\begin{split}
\mathbf{K}_{L} = \left((1+h^2\boldsymbol{\Delta}_L)^m (\mathbf{B}_L)^* - (\mathbf{S}_L)^*\right)\mathbf{K}_{L}\left(\mathbf{B}_L (1+h^2\boldsymbol{\Delta}_L)^m -\mathbf{S}_L\right),
\end{split}
\]
which yields
\begin{equation}
\label{equation:sioux0}
\begin{split}
&\langle \mathbf{K}_{L} u, u\rangle_{L^2(L,E)}  = \langle (\mathbf{B}_L)^*  \mathbf{K}_{L} \mathbf{B}_L (1+h^2\boldsymbol{\Delta}_L)^m u, (1+h^2\boldsymbol{\Delta}_L)^m u\rangle_{L^2(L,E)} + \langle \mathbf{T}_L u, u \rangle_{L^2(L,E)},
\end{split}
\end{equation}
where $\mathbf{T}$ is $\mc{O}(h^\infty)$ and uniformly smoothing (and its expression involves all the above operators). 

Using Lemma \ref{lemma:uniform-bound} and \eqref{equation:h-equivalence-normes}, we have that 
\[
|\langle \mathbf{T}_L u, u \rangle_{L^2(L,E)}| = \mc{O}(h^\infty)\|u\|^2_{L^2(L,E)} \leq \mc{O}(h^\infty)\|u\|^2_{H^{2m}_{h,\leaf}(L,E)}.
\]
As to the first term in \eqref{equation:sioux0}, observe that, by Cauchy-Schwarz:
\[
\begin{split}
 |\langle (\mathbf{B}_L)^* \mathbf{K}_{L} \mathbf{B}_L & (1+h^2\boldsymbol{\Delta}_L)^m u, (1+h^2\boldsymbol{\Delta}_L)^m u\rangle_{L^2(L,E)}| \\
& \leq \|(\mathbf{B}_L)^*\mathbf{K}_{L}\mathbf{B}_L (1+h^2\boldsymbol{\Delta}_L)^m u\|_{L^2(L)} \|(1+h^2\boldsymbol{\Delta}_L)^m u\|_{L^2(L,E)} \\
& \leq \|(\mathbf{B}_L)^* \mathbf{K}_{L} \mathbf{B}_L \|_{L^2(L) \to L^2(L)} \|(1+h^2\boldsymbol{\Delta}_L)^m u\|^2_{L^2(L,E)} \\
&\leq C \| u\|^2_{H^{2m}_{h,\leaf}(L,E)},
\end{split}
\]
where we have used in the last line the definition of the $H^{2m}_{h,\leaf}(L,E)$ norm and that the operator 
\[
(\mathbf{B}_L)^* \mathbf{K}_{L} \mathbf{B}_L \in \Psi^{-1}_{h,\leaf}(M,\mc{F}, E \to E)
\]
has uniform bound\footnote{Note that we lose here some Sobolev exponent (we use the boundedness on $L^2(L)$ of an operator of order $-1$, which is suboptimal).} $\leq C$ on $L^2(L)$ by Lemma \ref{lemma:uniform-bound}. This proves the first inequality in \eqref{equation:marcelle}, and the second follows by a similar argument.
\end{proof}

\section{Analytic, dynamical and geometric preliminaries} \label{section:preliminaries}

Throughout this section, $M$ is a smooth closed (compact, without boundary) manifold endowed with an arbitrary background metric $g$. This section is organized as follows:
\begin{itemize}
\item In \S\ref{ssection:dynamics}, we recall standard facts on flows and discuss the propagator of a linear lift of the flow to a smooth vector bundle $E \to M$;  
\item In \S\ref{ssection:distributions}, we discuss the theory of distributions in the context of a foliated space;
\item In \S\ref{ssection:resonances}, we recall the definition of Pollicott-Ruelle resonances and resonant states along with some of their main properties.
\end{itemize}

\subsection{Dynamics} \label{ssection:dynamics} Let $\varphi_t : M \to M$ be a smooth flow with generator $X := \partial_t \varphi_t|_{t=0} \in C^\infty(M,TM)$.

\subsubsection{Expanded foliation} \label{sssection:expanded-foliation} We say that the foliation $\mc{F}$ is \emph{expanded} by $\varphi_t$ if there exists $C, \lambda > 0$ such that
\begin{equation}
\label{equation:lambda2-introduction}
|\dd\varphi_{-t}(x)v| \leq C e^{-\lambda t} |v|, \qquad \forall v \in T\mc{F}(x), t \geq 0.
\end{equation}
From now on, we use the notation $W^u(x)$ in place of $L(x)$ to denote the leaves of $\mc{F}$, in order to be consistent with the notation in (partially) hyperbolic dynamical systems. We also introduce $E_u(x) := T_x\mc{F}$.

\begin{example}[Partially hyperbolic flows] Partially hyperbolic flows are typical examples of flows admitting an expanded foliation. For such flows, there exists a flow-invariant continuous splitting
\begin{equation}
\label{equation:splitting-tm}
TM = E_c \oplus E_s \oplus E_u
\end{equation}
and constants $C,\lambda > 0$ satisfying that for every $x\in M$ and $t\geq 0$, if $v_s \in E_s(x), v_u \in E_u(x)$ and $v_c \in E_c(x)$ are unit vectors then:
\begin{equation}
\label{equation:anosov}
\begin{aligned}
\lvert \dd\varphi_t(x)\, v_s \rvert
&\leq C e^{-\lambda t}, \\
\lvert \dd\varphi_{-t}(x)\, v_u \rvert
&\leq C e^{-\lambda t}, \\
\max \left\{
\frac{\lvert \dd\varphi_t(x)\, v_c \rvert}{\lvert \dd\varphi_t(x)\, v_u \rvert},
\frac{\lvert \dd\varphi_{-t}(x)\, v_c \rvert}{\lvert \dd\varphi_{-t}(x)\, v_s \rvert}
\right\}
&\leq C e^{-\lambda t}.
\end{aligned}
\end{equation}

As an example, Anosov flows satisfy \eqref{equation:splitting-tm} with $E_c$ reduced to $\R X$, the span of the flow generator. It is a non-trivial fact (see \cite{HirschPughShub}) that the strong unstable (or stable) foliation $\mc{F} = W^u$ of an Anosov flow is a transversally continuous foliation with smooth leaves (Definition \ref{definition:foliation}). In addition, the foliation is absolutely continuous. We refer to \cite[\S 2.2]{burns-wilkinson} and references therein for details.
\end{example}

Let $g$ be a smooth arbitrary background metric on $M$. Given $x \in M$, the leaf $W^u(x)$ is a smooth immersed submanifold of $M$. The restriction of $g$ to $W^u(x)$ defines a smooth metric $g|_{W^u(x)}$. Notice that this metric is uniformly bounded, namely $\nabla_g^k \mc{R}$ (the Riemann curvature tensor) is uniformly bounded (in the $C^0$ topology) on the whole leaf for any $k \geq 0$.

\subsubsection{Propagator} Recall that the \emph{propagator} of $\X$ is the group of operators
\[
e^{-t\X} : C^\infty(M,E) \to C^\infty(M,E)
\]
such that
\[
e^{-t\X} f (x) = \boldsymbol{\varphi}_t(f(\varphi_{-t}x)) \in E_x,  \qquad \forall x \in M, f \in C^\infty(M,E).
\]
Notice that $e^{-t\X}$ (for $t \geq 0$) corresponds to the \emph{forward propagation} of sections while $e^{t\X}$ corresponds to backward propagation.

The propagator $e^{-t\X} : E \to E$ is exponentially bounded on $L^\infty(M,E)$, that is, there exist $C,\lambda > 0$ such that for all $t \geq 0$:
\begin{equation}
\label{equation:e-infini0}
\|e^{-t \X}\|^2_{L^\infty(M,E)} \leq C e^{\lambda t}.
\end{equation}
This bound can be easily obtained by observing that $e^{-t\X}$ is uniformly bounded on $L^\infty(M,E)$ for all $t \in [0,1]$ and that $e^{-t\X}$ satisfies the group property $e^{-(t+s)\X} = e^{-t\X} e^{-s\X}$. Equivalently, \eqref{equation:e-infini0} can be stated as the following fiberwise estimate (here, $|\bullet|$ denotes the norm on $E$):
\begin{equation}
\label{equation:e-infini}
|\boldsymbol{\varphi}_t(f_x)|^2 \leq C e^{\lambda t} |f_x|^2, \qquad \forall x \in M, f_x \in E_x, t \geq 0.
\end{equation}

Finally, we record for later use a straightforward consequence of Egorov's lemma in the leafwise calculus (see Lemma \ref{lemma:egorov-leafwise}):

\begin{lemma}
\label{lemma:egorov}
Let $\mathbf{A} \in \Psi^m_{h,\leaf}(M,\mc{F},E \to E)$ be a principally diagonal operator. Then
\[
e^{t\X} \mathbf{A} e^{-t\X} \in \Psi^m_{h,\leaf}(M,\mc{F},E \to E)
\]
is principally diagonal with principal symbol $\sigma_{e^{t\X} \mathbf{A} e^{-t\X}}(x,\xi) = \sigma_{\mathbf{A}}(\varphi_t(x), d\varphi_{t}^{-\top}(x)\xi)$.
\end{lemma}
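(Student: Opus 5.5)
The plan is to write $e^{t\X}\mathbf{A}e^{-t\X}$ as a composition of three operators: a bundle map, the pullback by a leafwise diffeomorphism, and the inverse bundle map. Lemma \ref{lemma:egorov-leafwise} then handles the only nontrivial factor.

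\emph{Step 1: factor the propagator.} Unwinding the definition of the propagator, for $f \in C^\infty_{\leaf}(M,E)$ we have
\[
e^{t\X}f(x) = \boldsymbol{\varphi}_{-t}\big(f(\varphi_t x)\big).
\]
So $e^{t\X} = \mathbf{P}_t \circ \varphi_t^*$, where $\varphi_t^*$ is pullback of sections and $\mathbf{P}_t$ is the fiberwise linear identification $\boldsymbol{\varphi}_{-t} : E_{\varphi_t x} \to E_x$. More precisely, $\varphi_t^*$ sends sections of $E$ to sections of $\varphi_t^*E$, and $\mathbf{P}_t$ is a leafwise smooth bundle isomorphism $\varphi_t^*E \to E$. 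Hence
\[
e^{t\X}\mathbf{A}e^{-t\X} = \mathbf{P}_t \circ \big(\varphi_t^* \circ \mathbf{A} \circ (\varphi_t^{-1})^*\big) \circ \mathbf{P}_t^{-1}.
\]

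\emph{Step 2: apply Egorov.} Since $\varphi_t$ is smooth and preserves $\mc{F}$, it is a leafwise diffeomorphism in the sense of Definition \ref{definition:leafwise-diffeo}. The derivatives in $x_2$ in admissible charts are continuous because $\varphi_t$ is smooth and the charts are leafwise smooth. By Lemma \ref{lemma:egorov-leafwise}, in its vector-bundle version, the middle factor lies in $\Psi^m_{h,\leaf}(M,\mc{F},\varphi_t^*E)$. Its principal symbol is $\sigma_{\mathbf{A}}(\varphi_t x, \dd\varphi_t(x)^{-\top}\xi)\cdot\mathbf{1}$, which is still diagonal because pulling back a scalar multiple of the identity gives a scalar multiple of the identity.

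\emph{Step 3: conjugate by the bundle map.} Conjugation by $\mathbf{P}_t$ is conjugation by a leafwise smooth section of $\mathrm{Hom}(\varphi_t^*E,E)$, that is, a multiplication operator of order $0$. By the composition property of Proposition \ref{proposition:pseudo}, the result stays in $\Psi^m_{h,\leaf}$. The principal symbol becomes $\mathbf{P}_t\,(a\cdot\mathbf{1})\,\mathbf{P}_t^{-1} = a\cdot\mathbf{1}$, so principal diagonality is preserved and the symbol is exactly the one claimed. The residual part of Definition \ref{definition:leafwise-pseudo} (disjoint cutoffs giving a residual operator) also survives both conjugations. The reason is that $\varphi_t$ distorts leafwise distances by at most $Ce^{C|t|}$ for fixed $t$, so uniform off-diagonal support is kept, and derivative bounds are kept by the chain rule.

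\emph{Main obstacle.} The delicate point is Step 3 in the merely leafwise smooth setting. One has to check that $\boldsymbol{\varphi}_t$ is leafwise smooth in local trivializations, so that multiplication by it is a genuine order-$0$ element of the calculus, and that the subprincipal errors it creates are $\mc{O}(h)$ in $S^{m-1}_{h,\leaf}$. I would argue this in an admissible chart where $E$ is trivialized. There $\mathbf{P}_t$ is a matrix-valued function $G(x_1,x_2)$, smooth in $x_2$ and continuous in $x_1$, and the standard composition formula in the $x_2$ variable, parametrized by $x_1$, gives $G\,\Op^{\R^n}_{h,\leaf}(a)\,G^{-1} = \Op^{\R^n}_{h,\leaf}(a) + \mc{O}_{S^{m-1}_{h,\leaf}}(h)$ when $a$ is scalar.
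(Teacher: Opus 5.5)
Your proposal is correct and takes essentially the same approach as the paper, whose proof just cites the vector-valued version of Lemma \ref{lemma:egorov-leafwise}. Your factorization $e^{t\X}=\mathbf{P}_t\circ\varphi_t^*$, followed by conjugation by the order-zero bundle map (which fixes $a\cdot\mathbf{1}$, and this is exactly why the principal-diagonality hypothesis is needed), is what that citation amounts to once unpacked.
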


Here, we slightly abuse notation by identifying as usual the endomorphism-valued symbol with a scalar.

\begin{proof}
Immediate consequence of Lemma \ref{lemma:egorov-leafwise} in the vector-valued case.
\end{proof}

\subsubsection{Unstable Jacobian} \label{sssubection:unstable-jacobian}
Given a vector bundle $V \to M$, we denote by $\Omega^1 V := |\Lambda^{\mathrm{rk}(V)} V^*|$ the density bundle of $V$ over $M$, that is the space of antisymmetric maps $\omega : V \times ... \times V \to \R$ such that
\[
\omega(v_1,...,\lambda v_i, ..., v_{\mathrm{rk}(V)}) = |\lambda| \omega(v_1, ..., v_i, ...  v_{\mathrm{rk}(V)}),
\]
for all $v_1, ... v_k \in V$ and $\lambda \in \R$. For $x \in M$, we let
\[
\vol_u(x) \in \Omega^1 E_u(x)
\]
be the Riemannian density on $E_u(x)$ induced by $g|_{W^u(x)}$, that is on the tangent space to the leaf $W^u(x)$.

Given $x \in M$ and $t \geq 0$, the map
\[
\varphi_t : W^u(\varphi_{-t} x) \to W^u(x)
\]
is a (smooth) diffeomorphism. We let $|\det_u \dd \varphi_t|$ be the Jacobian of this map—called the unstable Jacobian— such that
\[
\varphi_t^*(\vol_u(\varphi_t(y))) = |{\det}_u \dd\varphi_t(y)| \vol_u(y), \qquad \forall y \in W^u(\varphi_{-t}x).
\]
Observe that there exist $C,\lambda' > 0$ such that for all $x \in M, t \geq 0$:
\begin{equation}
\label{equation:bounded-jacobian}
\|{\det}_u \dd \varphi_t\|_{L^\infty(M)} \leq C e^{\lambda t}.
\end{equation}
This follows from \eqref{equation:e-infini} applied with $E = \Omega^1 E_u$.

\subsubsection{Propagator estimate on $L^2$} 

Given $x \in M$, the norm $\|\bullet\|_{L^2(W^u(x),E)}$ is defined as follows:
\[
\|f\|_{L^2(W^u(x),E)}^2 := \int_{W^u(x)} |f(y)|^2 \dd \vol_u(y),
\]
where $|\bullet|$ is the fiberwise norm on $E$. We will need the following estimate on $L^2$:

\begin{lemma}
There exist $C_1,\lambda_1 > 0$ such that for all $x \in M$, $t \geq 0$, $f \in L^2(W^u(\varphi_{-t}x), E)$, the following estimate holds:
\begin{equation}
\label{equation:growth1}
\|e^{-t\X} (f|_{W^u(\varphi_{-t}x)})\|^2_{L^2(W^u(x))} \leq C_1 e^{\lambda_1 t} \|f\|^2_{L^2(W^u(\varphi_{-t}x))}.
\end{equation}
\end{lemma}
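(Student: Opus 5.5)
The estimate follows from a change of variables along the leaf combined with the fiberwise growth bound \eqref{equation:e-infini} and the Jacobian bound \eqref{equation:bounded-jacobian}. By definition of the propagator, for $y \in W^u(x)$ we have $e^{-t\X} f(y) = \boldsymbol{\varphi}_t(f(\varphi_{-t} y))$, where $\varphi_{-t}y \in W^u(\varphi_{-t}x)$ because the foliation is flow-invariant. Integrating over the leaf gives
\[
\|e^{-t\X} f\|^2_{L^2(W^u(x))} = \int_{W^u(x)} |\boldsymbol{\varphi}_t(f(\varphi_{-t}y))|^2 \dd\vol_u(y).
\]

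First, I bound the integrand pointwise using \eqref{equation:e-infini}: $|\boldsymbol{\varphi}_t(f(\varphi_{-t}y))|^2 \leq C e^{\lambda t} |f(\varphi_{-t}y)|^2$ for $t \geq 0$. Second, I change variables $y = \varphi_t(z)$ with $z \in W^u(\varphi_{-t}x)$. The map $\varphi_t : W^u(\varphi_{-t}x) \to W^u(x)$ is a smooth diffeomorphism between leaves, and by the definition of the unstable Jacobian in \S\ref{sssubection:unstable-jacobian}, $\varphi_t^*\vol_u = |{\det}_u \dd\varphi_t(z)| \vol_u(z)$. This gives
\[
\|e^{-t\X} f\|^2_{L^2(W^u(x))} \leq C e^{\lambda t} \int_{W^u(\varphi_{-t}x)} |f(z)|^2 \, |{\det}_u \dd\varphi_t(z)| \dd\vol_u(z).
\]
Third, I apply \eqref{equation:bounded-jacobian}, which bounds $|{\det}_u \dd\varphi_t|$ by $C e^{\lambda' t}$ uniformly on $M$. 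Combining the three steps yields \eqref{equation:growth1} with $C_1 = C^2$ and $\lambda_1 = \lambda + \lambda'$, uniformly in $x \in M$ and $t \geq 0$.

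The only point requiring care is that all constants are uniform over the noncompact leaves. This holds because \eqref{equation:e-infini} is a fiberwise bound over the compact manifold $M$. The Jacobian bound is uniform for the same reason: it is \eqref{equation:e-infini} applied to the continuous bundle $\Omega^1 E_u$ with the induced linear cocycle $(\dd\varphi_t|_{E_u})^{-\top}$-type action on densities, which is continuous in $x$ since $E_u = T\mc{F}$ is a continuous flow-invariant subbundle. No pseudodifferential input is needed here.

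To obtain the sharp rate $\lambda_1$ used later in the definition of $\boldsymbol{\mu}$, one can define $\lambda_1$ as the infimum of admissible exponents. This infimum is finite by the argument above, and equivalently equals the exponential growth rate of $\sup_x |\boldsymbol{\varphi}_t|^2 \, |{\det}_u \dd\varphi_t|$, by subadditivity.
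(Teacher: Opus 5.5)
Your argument is correct and is essentially the paper's proof: the pointwise bound from \eqref{equation:e-infini}, the change of variables $y=\varphi_t(z)$ along the leaf, which produces the unstable Jacobian, and then \eqref{equation:bounded-jacobian} (the paper does this for $f\in C^\infty_{\comp}$ and concludes by density, while you work directly with measurable $f$, which is equally valid). Your closing remark is not needed for the lemma, and the infimum of admissible exponents need not itself be admissible (for example, a unipotent cocycle gives a $t^2$ factor at rate $0$), nor need it be positive, so one should take any $\lambda_1>0$ strictly above it.
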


\begin{proof}
Recall that the map $\varphi_t : W^u(\varphi_{-t}x) \to W^u(x)$ is a diffeomorphism with bounded (unstable) Jacobian, see \eqref{equation:bounded-jacobian}. Given $f \in C^\infty_{\comp}(W^u(\varphi_{-t}x),E)$, one has:
\[
\begin{split}
\|e^{-t\mathbf{X}}f\|_{L^2(W^u(x), E)}^2 &= \int_{W^u(x)} |\boldsymbol{\varphi}_t(f(\varphi_{-t}y))|^2 \dd \vol_u(y) \\
& \overset{\eqref{equation:e-infini}}{\leq} C e^{\lambda t} \int_{W^u(x)} |f(\varphi_{-t}y)|^2 \dd \vol_u(y) \\
& = C e^{\lambda t} \int_{W^u(\varphi_{-t}x)} |f(z)|^2 |{\det}_u \dd \varphi_t(z)|\dd \vol_u(z) \\
& \overset{\eqref{equation:bounded-jacobian}}{\leq} C_1 e^{\lambda_1 t}\int_{W^u(\varphi_{-t}x)} |f(z)|^2\dd \vol_u(z) = C_1 e^{\lambda_1 t} \|f\|_{L^2(W^u(\varphi_{-t}x),E)}^2,
\end{split}
\]
for some constants $C_1,\lambda_1 > 0$ (which can be made explicit from \eqref{equation:bounded-jacobian} and \eqref{equation:e-infini}). Here, we have applied the change of variable formula in the third line. By density of the space $C^\infty_{\comp}(W^u(\varphi_{-t}x),E)$ in $L^2(W^u(\varphi_{-t}x),E)$, the estimate remains valid on $L^2(W^u(\varphi_{-t}x), E)$, therefore establishing the claim.
\end{proof}

\subsubsection{Dual bundle} The natural action (or \emph{lift}) of the flow $\varphi_t : M \to M$ on $T^*M$ is by the inverse transpose of the differential $\dd \varphi_t^{-\top}$, where for all $x\in M, \xi \in T^*_xM$ and $v \in T_{\varphi_tx}M$,
\begin{equation}
\label{equation:transpose}
(\dd \varphi_t^{-\top}(x) \xi, v) := (\xi, \dd \varphi_t(x)^{-1} v).
\end{equation}
In the following, we denote the dual bundle $T^*\mc{F}$ to $T\mc{F}$ by $E_u^*$. Notice that $E_u^*$ is flow-invariant with respect to the action \eqref{equation:transpose} of the flow by inverse transpose. In addition, there exist $C_2,\lambda_2 > 0$ such that:
\begin{equation}
\label{equation:growth2}
|\dd \varphi_t^{-\top}(x)\xi| \leq C_2 e^{-\lambda_2 t} |\xi|, \qquad \forall \xi \in E_u^*(x), t \geq 0.
\end{equation}
Here $|\bullet|$ denotes the fiberwise metric on $T^*M$ induced by the metric $g$ on $TM$. The inequality \eqref{equation:growth2} follows immediately from \eqref{equation:lambda2-introduction} and $\lambda_2$ equals the exponent $\lambda$ in \eqref{equation:lambda2-introduction}.

\begin{example}[Dual splitting in the Anosov case] In the Anosov case, the splitting \eqref{equation:splitting-tm} admits a dual splitting on $T^*M$. Define the bundles $E_0^\perp, E_s^\perp, E_u^\perp \subset T^*M$ through the relations:
\[
E_0^\perp(E_s \oplus E_u) = 0 = E_s^\perp(E_0 \oplus E_s) = E_u^\perp(E_0 \oplus E_u).
\]
Similarly to \eqref{equation:splitting-tm}, one has a contraction (resp. expansion) property of the flow on $E_s^\perp$ (resp. $E_u^\perp$). In the literature, these bundles are usually denote by $E_0^*, E_s^*,E_u^*$; however, to avoid confusion with the dual of $E_0,E_s$ and $E_u$ (which are \emph{not} the same bundles, see the next remark), we opted for a new notation.
\end{example}

We end this paragraph with an important remark:

\begin{remark}[Identification of $E_s^\perp$ with $E_u^*$ in the Anosov case]
\label{remark:non}
Given $x \in M$ and $\xi : E_u(x) \to \C$, a $1$-form, $\xi$ can be extended to a $1$-form $\tilde{\xi} : T_xM \to \C$ by setting $\xi(v) = 0$ for all $v \in E_0 \oplus E_s$, that is $\tilde{\xi} \in E_s^\perp(x)$. Conversely, any $1$-form $T_xM \to \C$ vanishing on $E_0 \oplus E_s$ can be seen as a $1$-form on $E_u(x)$ by restriction. In other words, there is a natural identification of $E_s^\perp$ with the dual of $E_u$. Nevertheless, we emphasize that this map is only a \emph{topological isomorphism} of vector bundles, and does not preserve the (leafwise) smooth properties of these bundles. Indeed, $E_u$ is leafwise smooth along the strong unstable foliation $W^u$, and so is $E_u^*$. However, $E_s^\perp$ is \emph{not} leafwise smooth along $W^u$, because it is defined by the equation $E_s^\perp(E_0 \oplus E_s) = 0$, and $E_s$ is (in general) \emph{not} smooth along $W^u$.
\end{remark}

\subsection{Distributions and foliations} 

\label{ssection:distributions}

In this section, we discussion the restriction of distributions to a leaf, and a Fubini-type formula. Throughout \S\ref{ssection:distributions}, $\mc{F}$ is a transversally continuous foliation with smooth leaves of dimension $d \geq 1$. We let $d^\perp := n-d$ be the codimension. That $\mc{F}$ is expanded by a flow is not needed.

\subsubsection{Restriction of distributions to a leaf} We first establish that distributions satisfying a transverse wavefront set condition can be restricted to the leaves of the foliation. Recall that $d$ is the dimension of the leaves and $d^\perp = n-d$ is the codimension.

\begin{lemma}
\label{lemma:restriction}
Let $N\geq 0, \eps > 0$ and $\Gamma \subset T^*M \setminus \{0\}$ be a closed cone such that $\Gamma \cap N^*\mc{F} = \emptyset$. Then there exists a uniform constant $C > 0$ such that for all $u \in H^{-2N+d^\perp/2+\eps}(M) \cap \mc{D}'_{\Gamma}(M)$, for all $x \in M$:
\[
\|\chi_x \mathbf{r}_{L(x)}(u)\|_{H^{-2N}(L(x))} \leq C \left(\|u\|_{H^{-2N+d^\perp/2+\eps}(M)} + \|u\|_{\Gamma,N,n}\right),
\]
where $\|\bullet\|_{\Gamma,N,n}$ is a certain semi-norm on $\mc{D}'_{\Gamma}(M)$ (which depends on $N$).
\end{lemma}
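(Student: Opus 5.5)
We argue by duality, writing the restriction as a pairing of $u$ against a distribution whose wavefront set lies in $N^*\mc{F}$. By definition of the $H^{-2N}(L(x))$ norm, it suffices to bound $(u, \chi_x \varphi\, \mu_x)$ uniformly over $\varphi \in C^\infty_{\comp}(L(x))$ with $\|\varphi\|_{H^{2N}(L(x))} = 1$. (For smooth $u$ this is the definition of $(\chi_x \mathbf{r}_{L(x)} u, \varphi)$; the general case then follows by density in $H^{-2N+d^\perp/2+\eps}(M)\cap\mc{D}'_\Gamma(M)$ for the Hörmander topology.) The test object is $T_{x,\varphi} := \varphi\,\chi_x \mu_x$, a distributional density on $M$. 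Arguing as in Lemma \ref{lemma:leafwise-density}, its wavefront set is contained in $N^*\mc{F}$, and hence is disjoint from $\Gamma$ (up to the sign convention for the pairing, i.e. $-\Gamma$; we may replace $\Gamma$ by $\Gamma\cup-\Gamma$, which still avoids $N^*\mc{F}$ since the latter is symmetric). So the pairing $(u,T_{x,\varphi})$ is well defined by Hörmander's criterion.

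We then microlocalize. Choose a pseudodifferential operator $P \in \Psi^0(M)$ with $\WF(P)$ contained in a small conic neighborhood $\Gamma'$ of $\Gamma$ disjoint from a conic neighborhood of $N^*\mc{F}$, and with $1-P$ microlocally vanishing near $\Gamma$. This gives the splitting
\[
(u,T_{x,\varphi}) = (Pu, T_{x,\varphi}) + ((1-P)u, T_{x,\varphi}).
\]
The second term is controlled by a seminorm of $\mc{D}'_\Gamma$: $(1-P)u$ is smooth, with $C^k$ norms bounded by $\|u\|_{\Gamma,N,n}$ for suitable $k$. Its pairing with $T_{x,\varphi}$ is therefore bounded by $C\|(1-P)u\|_{C^{2N}}\,\|\varphi\chi_x\|_{H^{-2N}\text{-dual}}$, which is at most $C\|u\|_{\Gamma,N,n}$. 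This uses only that $\chi_x$ has bounded support and that $\mu_x$ has uniformly bounded density; in fact an $L^1$-type bound on $\varphi$ over the support of $\chi_x$ suffices, and this is controlled by $\|\varphi\|_{L^2}$. For the first term we write $(Pu,T_{x,\varphi}) = (u, P^* T_{x,\varphi})$ and estimate
\[
|(u, P^* T_{x,\varphi})| \le \|u\|_{H^{-2N+d^\perp/2+\eps}}\, \|P^*T_{x,\varphi}\|_{H^{2N-d^\perp/2-\eps}}.
\]

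The main obstacle is the bound $\|P^*T_{x,\varphi}\|_{H^{2N-d^\perp/2-\eps}(M)} \le C\|\varphi\|_{H^{2N}(L(x))}$, uniformly in $x$. Heuristically, a density on a $d$-dimensional submanifold carrying an $H^{2N}$ function lies in $H^{2N-d^\perp/2-\eps}$ in a neighborhood (the trace theorem dual), but leaves are only continuous transversally, so the classical trace theorem does not apply directly. I would work in a local smooth chart, as in the proof of Lemma \ref{lemma:leafwise-density}, and estimate the Fourier transform of $\psi T_{x,\varphi}$ in the cone region where $P^*$ is supported. There, $\xi$ is uniformly transverse to $N^*\mc{F}$, so $\xi(Y_j)\neq 0$ for leafwise vector fields $Y_j$, and $2N$ leafwise integrations by parts give $|\widehat{\psi T_{x,\varphi}}(\xi)| \le C\langle\xi\rangle^{-2N}\|\varphi\|_{W^{2N,1}}$. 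Going from $W^{2N,1}$ to $H^{2N}$ on the compact support is harmless. Then
\[
\int_{\Gamma'} \langle\xi\rangle^{2(2N-d^\perp/2-\eps)}\langle\xi\rangle^{-4N}\,d\xi
\]
converges only if $d^\perp + 2\eps > n$, which fails. So the pointwise decay bound loses too much, and I would instead use a Plancherel-type argument. Straighten the leaf to first order through a leafwise-smooth parametrization $y_2\mapsto \Theta(x_1,y_2)$; frequencies in the cone are then dominated by their leafwise components, and one integrates the leafwise $H^{2N}$ mass against the transverse directions. The transverse delta contributes $\langle\xi\rangle^{d^\perp/2+\eps}$, which is exactly the loss in the exponent. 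Uniformity in $x$ comes from the continuity of admissible charts, using only finitely many leafwise derivatives of $\Theta$. Finally, $\chi_x$ is supported in a ball of radius $2$, which is covered by finitely many charts with uniform constants.
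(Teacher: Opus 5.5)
Your overall architecture works and differs from the paper's. Splitting $u=Pu+(1-P)u$ with a microlocal cutoff is sound. So is bounding the smooth part trivially: the seminorm can be taken to be $\|(1-P)u\|_{L^\infty}$, since $\int|\varphi|\chi_x\,\dd\mu_x\lesssim\|\varphi\|_{L^2}$ uniformly in $x$. Treating $Pu$ by Sobolev duality against $P^*T_{x,\varphi}$ is also fine.

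\textbf{The gap.} It lies in the step you yourself call the main obstacle, and the reason you give for its difficulty is mistaken. Transverse continuity of $\mc{F}$ is irrelevant for a fixed $x$: the single leaf $L(x)$ is a smooth immersed submanifold. So near each point of $\{\chi_x\neq 0\}$ there is a smooth chart of $M$ in which the plaque of $L(x)$ is exactly $\{z_1=0\}$. For example, take $(z_1,z_2)\mapsto\Theta(x_1,z_2)+Az_1$ in a fixed smooth chart, with $A$ spanning a complement of $T_x\mc{F}$. Its $C^k$ bounds are uniform in $x$ because only finitely many $x_2$-derivatives of $\Theta$ enter; transverse continuity is used only for this uniformity.

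Exact flattening is precisely the first step of the paper's proof, and it is what your sketch lacks. After a merely first-order flattening, writing the plaque as a graph $z_1=g(z_2)$, $\widehat{T}_{x,\varphi}(\xi)$ is an oscillatory integral with phase $\xi_1\cdot g(y_2)+\xi_2\cdot y_2$. Plancherel in $\xi_2$ then produces $\|e^{-i\xi_1\cdot g}\varphi J\|_{H^s}$, which grows with $|\xi_1|$, so Plancherel alone no longer gives the bound. Your heuristic is also off: $\varphi\,\delta_L$ lies in $H^s$ only for $s<-d^\perp/2$, and it is the cutoff $P^*$ that removes the transverse singularity.

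\textbf{The repair.} With exact flattening, $T_{x,\varphi}=F(z_2)\,\delta(z_1)$ with $F=\varphi J$, so $\widehat{T}_{x,\varphi}(\xi)=\widehat F(\xi_2)$ is independent of $\xi_1$. On the plaque $N^*\mc{F}=\{\xi_2=0\}$, so after localizing to a thin tube the conjugated symbol of $P^*$ lives in $\{|\xi_1|\le\eta|\xi_2|\}$. Modulo smoothing remainders,
\[
\|P^*T_{x,\varphi}\|^2_{H^s}\lesssim\int_{\R^d}\langle\xi_2\rangle^{2s}\,|\widehat F(\xi_2)|^2\,\vol\{\xi_1:|\xi_1|\le\eta|\xi_2|\}\,\dd\xi_2\lesssim\int_{\R^d}\langle\xi_2\rangle^{2s+d^\perp}\,|\widehat F(\xi_2)|^2\,\dd\xi_2 ,
\]
which is $\lesssim\|\varphi\|^2_{H^{2N}(L(x))}$ for $s=2N-d^\perp/2-\eps$.

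\textbf{Comparison with the paper.} Repaired this way, your argument is the dual of the paper's. The paper writes $(\mathbf{r}_{L(x)}u',\varphi)=(2\pi)^{-n}\int\widehat{u'}(\xi)\widehat\varphi(-\xi_2)\,\dd\xi$ in the flattened chart. It applies Cauchy--Schwarz with weight $(1+|\xi_2|^2)^{2N}$ on the $\varphi$ side and $(1+|\xi_1|^2)^{d^\perp/2+\eps}$ on the $u$ side; the $\eps$ makes the $\xi_1$-integral of the inverse weight finite. Only then does it split the $u$-integral into two regions. On the cone $\{|\xi_1|\le\eta|\xi_2|\}$, $\langle\xi\rangle\sim\langle\xi_2\rangle$ produces the Sobolev norm. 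On the complement, $\widehat{u'}$ decays rapidly by the wavefront condition.

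The paper's route avoids pseudodifferential cutoffs, symbol-calculus remainders and any regularity estimate for $P^*T_{x,\varphi}$. Yours makes the seminorm explicit. It also restricts $\xi_1$ to the cone on the test side, so it gives the bound with no $\eps$ loss.
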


Note that, under the hypothesis of the lemma, the restriction is always well-defined, see \cite[Lemma 4.3.2]{Lefeuvre-book}.

\begin{proof}
The proof boils down to a computation in local coordinates. Using a smooth diffeomorphism, the leaf $L(x)$ can be straightened to $\{x_1=0\} \subset \R^n$. In these coordinates, we set $u' := \chi_x u$. Then, for $\varphi \in C^\infty_{\comp}(L(x))$ with compact support around $x$, letting $\chi$ be a bump function depending only on the $x_2$-variable and such that $\chi(0)=1$, we find, using the Parseval identity:
\[
(\mathbf{r}_{L(x)} u', \varphi) = \lim_{\delta \to 0} \delta^{-d^\perp} (u', \varphi \otimes \chi(\bullet/\delta)) = \dfrac{1}{(2\pi)^n} \int_{\R^n}\widehat{u'}(\xi) \widehat{\varphi}(-\xi_2) \dd \xi_1 \dd \xi_2
\]
Inserting $(1+\xi_2^2)^{(d^\perp+\eps)/2}(1+\xi_1^2)^{-N}$ and applying the Cauchy-Schwarz inequality, we obtain:
\[
\begin{split}
|(\mathbf{r}_{L(x)} u', \varphi)| & \leq  \left(\int_{\R^{d}} |\widehat{\varphi}(-\xi_2)|^2(1+\xi_2^2)^{2N} \dd \xi_2 \int_{\R^{d_\perp}} (1+\xi_1^2)^{-(d^\perp/2+\eps)} \dd \xi_1\right)^{1/2} \\
& \qquad \times \left(\int_{\R^n}\dfrac{|\widehat{u'}(\xi)|^2(1+\xi_1^2)^{d^\perp/2+\eps}}{(1+\xi_2^2)^{2N}} \dd \xi_1 \dd \xi_2\right)^{1/2} \\
& \leq C \|\varphi\|_{H^{2N}(L(x))} \left(\int_{\R^n}\dfrac{|\widehat{u'}(\xi)|^2(1+\xi_1^2)^{d^\perp/2+\eps}}{(1+\xi_2^2)^{2N}} \dd \xi_1 \dd \xi_2\right)^{1/2},
\end{split}
\]
here $\eps > 0$ makes the integral convergent in the first line. To deal with the last integral, consider the cone $C := \{|\xi_1| \leq \eta |\xi_2|\}\subset \R^n$ for $\eta > 0$. Notice that $N^*\mc{F} = \{(x_1=0,x_2,\xi_1,0) ~|~ x_2 \in \R^{d}, \xi_1 \in \R^{d_\perp}\}$. Since $u \in \mc{D}'_{\Gamma}(M)$ and $\Gamma \cap N^*\mc{F} = \emptyset$, we obtain that $\Gamma \cap (\R^n \setminus C) = \emptyset$ if $\eta > 0$ is chosen large enough. This yields:
\[
\begin{split}
\int_{\R^n}\dfrac{|\widehat{u'}(\xi)|^2(1+\xi_1^2)^{d^\perp/2+\eps}}{(1+\xi_2^2)^{2N}}& \dd \xi_1 \dd \xi_2  = \int_C \bullet + \int_{\R^n \setminus C} \bullet \\
& \lesssim \int_{C}\dfrac{|\widehat{u'}(\xi)|^2}{(1+|\xi|^2)^{2N-d^\perp/2-\eps}} \dd \xi_1 \dd \xi_2 + \int_{\R^n \setminus C} \dfrac{\langle \xi \rangle^{-m} (1+\xi_1^2)^{d^\perp/2+\eps}}{(1+\xi_2^2)^{2N}} \\
& \lesssim \|u'\|_{H^{-2N+d^\perp/2+\eps}(M)} + \|u'\|_{m},
\end{split}
\]
where $m \gg 0$ is chosen large enough to make the integral converge, and we have used in the second integral that the Fourier transform of $u'$ decays faster than any polynomial power of $\langle\xi\rangle$ on $\R^n \setminus C$ under our hypothesis on $\WF(u')$. In the previous formula, $\|\bullet\|_m$ corresponds to a semi-norm on $\mc{D}'_{\Gamma}(M)$. We thus obtain:
\[
|(\mathbf{r}_{L(x)} u', \varphi)| \leq C \|\varphi\|_{H^{2N}(L(x))} (\|u'\|_{H^{-2N+d^\perp/2+\eps}(M)} + \|u'\|_{m}).
\]
This proves that
\[
\|\mathbf{r}_{L(x)} u'\|_{H^{-2N}(L(x))} \leq C(\|u'\|_{H^{-2N+d^\perp/2+\eps}(M)} + \|u'\|_{m}),
\]
which is the claimed result.
\end{proof}

Additionally, we will need a continuity result on $\chi_x \mathbf{r}_{L(x)}(u)$ with respect to $x \in M$. Recall from \S\ref{sssection:leafwise-volume} that $\mu_x$ is the Riemannian measure induced on the leaf $L(x) \in \mc{F}$. When $\mc{F}$ is transversally continuous with smooth leaves, the map $x \mapsto \mu_x$ is continuous.

\begin{lemma}
\label{lemma:continuity-leafwise-integration}
Let $\Gamma \subset T^*M \setminus \{0\}$ be a closed conic subset such that $\Gamma \cap N^*\mc{F} = \emptyset$. Then for all $u \in \mc{D}'_\Gamma(M)$, $\varphi \in C^\infty(M)$, the function
\[
M \to \C, \qquad x \mapsto (\chi_x \mathbf{r}_{L(x)}(u), \varphi \mu_x)
\]
is continuous.
\end{lemma}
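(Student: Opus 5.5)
The plan is to write the pairing as the pairing of $u$ against a distributional density on $M$, and then use continuity of the product of distributions under the wavefront set condition. Concretely, for fixed $x$, the quantity $(\chi_x \mathbf{r}_{L(x)}(u), \varphi\mu_x)$ should equal $(u, \varphi\,\chi_x\mu_x)$. Here $\chi_x\mu_x \in \mc{D}'_{N^*\mc{F}}(M,\Omega^1M)$ is the distributional density from \eqref{equation:mux}. This identity holds for smooth $u$ by definition. For $u \in \mc{D}'_\Gamma(M)$ it extends by density in the topology of $\mc{D}'_\Gamma(M)$, since the restriction $\mathbf{r}_{L(x)}$ is sequentially continuous on $\mc{D}'_\Gamma(M)$ when $\Gamma\cap N^*\mc{F}=\emptyset$ (the result behind \cite[Lemma 4.3.2]{Lefeuvre-book}). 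In fact one may take this as the definition of the restriction.

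The key step is then the following. The pairing
\[
\mc{D}'_\Gamma(M) \times \mc{D}'_{N^*\mc{F}}(M,\Omega^1M) \to \C, \qquad (u,v)\mapsto (u,\varphi v),
\]
is well defined, because $\Gamma \cap (-N^*\mc{F}) = \emptyset$; note that $N^*\mc{F}$ is symmetric. It is also sequentially continuous in each entry for the Hörmander topologies. This is the standard continuity of the product $u\cdot v$ of distributions with transverse wavefront sets, followed by integration against $1$ (\cite[Chapter 4]{Lefeuvre-book}, or Hörmander's Theorem 8.2.10). Fixing $u$, the map $x \mapsto (u,\varphi\chi_x\mu_x)$ is the composition of $x\mapsto\chi_x\mu_x$ with a continuous linear functional on $\mc{D}'_{N^*\mc{F}}$. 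By Lemma \ref{lemma:leafwise-density}, this first map is continuous into $\mc{D}'_{N^*\mc{F}}(M,\Omega^1M)$, which gives continuity in $x$.

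To be concrete about the continuity of the pairing, I would localize with a partition of unity in smooth charts and write the pairing via Parseval as $\int \widehat{\psi u}(\xi)\widehat{\psi'\varphi\chi_x\mu_x}(-\xi)\,\dd\xi$. I would then split frequency space into a closed cone $C$ avoiding $\Gamma$ and its complement, which avoids $N^*\mc{F}$ near the point. On $C$, $\widehat{\psi u}$ decays rapidly while $\widehat{\psi'\varphi\chi_x\mu_x}$ is polynomially bounded, uniformly in $x$ near $x_\star$. Off $C$, the roles reverse: the integration by parts estimate in the proof of Lemma \ref{lemma:leafwise-density} gives $|\widehat{\psi'\varphi\chi_x\mu_x}(\xi)|\leq C_N\langle\xi\rangle^{-N}$ uniformly for $x$ near $x_\star$, and $\widehat{\psi u}$ has polynomial growth. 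Pointwise convergence of the integrand as $x\to x_\star$ follows from the weak continuity established in \eqref{equation:pen}. Dominated convergence then concludes.

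The main obstacle is the uniformity in $x$ of the decay bounds off the cone. The cone must be chosen uniformly for all nearby leaves, which uses that $N^*\mc{F}$ is a continuous (closed) subbundle, so a single conic neighborhood works near $x_\star$. One also needs the leafwise vector fields $Y_j$ used in the integration by parts to have bounded derivatives along leaves, uniformly in the transverse parameter. Both follow from the continuity of $\partial^\alpha_{x_2}\Cchart$ in admissible charts.
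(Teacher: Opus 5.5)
Your proposal is correct and follows the paper's route. The paper likewise rewrites the quantity as $(u\cdot\chi_x\mu_x,\varphi)$, a product on $\mc{D}'_\Gamma(M)\times\mc{D}'_{N^*\mc{F}}(M,\Omega^1M)$ that is legitimate because $\Gamma\cap N^*\mc{F}=\emptyset$, and then composes with the continuity of $x\mapsto\chi_x\mu_x$ from Lemma \ref{lemma:leafwise-density}. Your Parseval and dominated-convergence paragraph explicitly unpacks the continuity of that product, a point you state more carefully than the paper, which only asserts that the product is well defined.
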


\begin{proof}
Recall from Lemma \ref{lemma:leafwise-density} that $\chi_x \mu_x \in \mc{D}'_{N^*\mc{F}}(M,\Omega^1 M)$. Note that
\[
(\chi_x \mathbf{r}_{L(x)}(u), \varphi \mu_x) = (u \times \chi_x \mu_x, \varphi),
\]
where $u \times \chi_x \mu_x$ is the product of distributions in $\mc{D}'_\Gamma(M) \times \mc{D}'_{N^*\mc{F}}(M,\Omega^1 M)$. This product is well-defined as $\Gamma \cap N^*\mc{F} = \emptyset$ (see \cite[Lemma 4.2.1]{Lefeuvre-book}). Additionally, as $x \mapsto \chi_x \mu_x \in \mc{D}'_{N^*\mc{F}}(M,\Omega^1M)$ is continuous (Lemma \ref{lemma:leafwise-density}), the continuity claimed in the lemma is immediate.
\end{proof}

\subsubsection{A Fubini formula for distributions} We now state a Fubini-type result for distributions. First, if $\mc{F}$ is absolutely continuous and $f \in C^\infty(M)$ is a smooth function with support in a small open subset $U \subset M$, we may write
\begin{equation}
\label{equation:intrinsic}
\int_U f(x) \dd \vol(x) = \int_{\Sigma} \left(\int_{L(x)} \chi_x(y) f(y) \dd \mu_x(y)\right) \dd \nu(x),
\end{equation}
where $\Sigma \subset M$ is a transverse slice to the foliation, the measure $\nu$ is a smooth measure on the slice $\Sigma$, and $\mu_x$ is a smooth measure on the leaf $L(x)$ such that $x \mapsto \mu_x$ is continuous (after identification of leaves via holonomy maps). Note that \eqref{equation:intrinsic} is a mere rewriting of the disintegration formula \eqref{equation:utile} in admissible coordinates, which holds thanks to the absolute continuity of the foliation. We also point out that the measure~$\mu_x$ in~\eqref{equation:intrinsic} does not necessarily coincide with the leafwise Riemannian measure introduced in \S\ref{sssection:leafwise-volume}. Nonetheless, the two differ only by multiplication with a leafwise smooth function, which will play no role in what follows. For this reason, we allow ourselves to use the same notation~$\mu_x$ for both. As in \S\ref{sssection:leafwise-volume}, we have that $M \ni x \mapsto \chi_x \mu_x \in \mc{D}'_{N^*\mc{F}}(M,\Omega^1M)$ is continuous (see Lemma \ref{lemma:leafwise-density}).

\begin{lemma}
Let $u \in \mc{D}'(M)$ such that $\WF(u) \cap N^*\mc{F} \setminus \{0\} = \emptyset$, and $\varphi \in C^\infty_{\comp}(U)$, where $U \subset M$ is the small open subset defined above. Then the function
\[
\Sigma \ni x \mapsto (\mathbf{r}_{L(x)}(u), \varphi \chi_x \mu_x) = \int_{L(x)} \mathbf{r}_{L(x)}(u)(y) \chi_x(y)\varphi(y) \dd \mu_x(y)
\]
is continuous on $\Sigma$. In addition, if $\mc{F}$ is absolutely continuous, then:
\begin{equation}
\label{equation:fubini}
(u,\varphi~\dd\vol) = \int_{\Sigma} (\mathbf{r}_{L(x)}(u), \varphi \chi_x \mu_x) \dd \nu(x).
\end{equation}
\end{lemma}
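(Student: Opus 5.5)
The continuity claim will follow directly from Lemma \ref{lemma:continuity-leafwise-integration}. On $L(x)$ one has $\chi_x \equiv 1$ near $x$. Since $\varphi$ is supported in the small set $U$, after shrinking $U$ if needed we may take $\chi_x \varphi = \varphi$ on $L(x)\cap U$ for all $x \in \Sigma$. The pairing $(\mathbf{r}_{L(x)}(u),\varphi\chi_x\mu_x)$ is then the product pairing $(u\times \chi_x\mu_x,\varphi)$, and this is continuous in $x$ because $x\mapsto \chi_x\mu_x \in \mc{D}'_{N^*\mc{F}}(M,\Omega^1M)$ is continuous (Lemma \ref{lemma:leafwise-density}). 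The product is well-defined and jointly continuous in the Hörmander topology since $\WF(u)\cap N^*\mc{F}=\emptyset$ (\cite[Lemma 4.2.1]{Lefeuvre-book}).

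For \eqref{equation:fubini}, I would argue by density. First, for $u\in C^\infty(M)$ the formula is exactly \eqref{equation:intrinsic} applied to $f=u\varphi$, i.e. the disintegration \eqref{equation:utile} in admissible coordinates, using absolute continuity. For general $u$, set $\Gamma:=\WF(u)$, or a slightly larger closed cone still disjoint from $N^*\mc{F}$. Take a sequence $u_k\in C^\infty(M)$ with $u_k\to u$ in $\mc{D}'_\Gamma(M)$; such a sequence exists because $C^\infty$ is sequentially dense in $\mc{D}'_\Gamma$ (for instance by mollification with a cutoff, \cite[Chapter 4]{Lefeuvre-book}). The left-hand side $(u_k,\varphi\,\dd\vol)$ then converges to $(u,\varphi\,\dd\vol)$. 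On the right-hand side, for each fixed $x$, $(u_k\times\chi_x\mu_x,\varphi)\to(u\times\chi_x\mu_x,\varphi)$ by sequential continuity of the product $\mc{D}'_\Gamma\times\mc{D}'_{N^*\mc{F}}\to\mc{D}'$.

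The main obstacle is passing to the limit inside the $\nu$-integral, which requires a bound uniform in $x\in\Sigma$ (dominated convergence, with $\Sigma$ relatively compact). I would obtain it from the uniform continuity estimate for the product. There is a seminorm $p$ of $\mc{D}'_\Gamma$ and a seminorm $q$ of $\mc{D}'_{N^*\mc{F}}$ such that $|(v\times w,\varphi)|\le C\,p(v)\,q(w)$. Here $q(\chi_x\mu_x)$ is bounded uniformly in $x$ by the uniform integration-by-parts estimates of Lemma \ref{lemma:leafwise-density}: the constants $C_N$ there are locally uniform. Since $\sup_k p(u_k)<\infty$ along a convergent sequence, the integrands are uniformly bounded. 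Alternatively, Lemma \ref{lemma:restriction} gives the uniform bound directly, because $u_k$ is bounded in some $H^{-s}$ and in the seminorms of $\mc{D}'_\Gamma$. Dominated convergence then gives \eqref{equation:fubini} for $u$.
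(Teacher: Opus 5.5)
Your proposal is correct and is essentially the paper's proof: continuity from Lemma \ref{lemma:continuity-leafwise-integration}, smooth approximants $u_k \to u$ in $\mc{D}'_\Gamma(M)$, pointwise convergence of the leafwise pairings, a bound uniform in $x \in \Sigma$, and dominated convergence applied to \eqref{equation:intrinsic}. Your justification of the uniform bound, which the paper leaves as ``one verifies'', is fine in substance, with one caveat: the product $\mc{D}'_\Gamma \times \mc{D}'_{N^*\mc{F}} \to \mc{D}'$ is in general only sequentially continuous, so no bilinear estimate by continuous seminorms is available, and you should instead run the Fourier-splitting estimate directly (or use your alternative via Lemma \ref{lemma:restriction}). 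That estimate combines the uniform polynomial bounds on $\widehat{\psi u_k}$ from Banach--Steinhaus, their uniform rapid decay off $\Gamma$, and the uniform decay of $\widehat{\psi \chi_x \mu_x}$ off $N^*\mc{F}$ from Lemma \ref{lemma:leafwise-density}.
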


\begin{proof}
Continuity follows from Lemma \ref{lemma:continuity-leafwise-integration}. Let $\Gamma \subset T^*M \setminus \{0\}$ be an arbitrary closed cone such that $\WF(u) \subset \Gamma$ and $\Gamma \cap N^*\mc{F} = \emptyset$. Let $u_\eps \in C^\infty(M)$ be a sequence of smooth functions such that $u_\eps \to_{\eps \to 0} u$ in $\mc{D}'_\Gamma(M)$ (see \cite[Definition 4.1.8]{Lefeuvre-book} for a definition of the convergence in $\mc{D}'_\Gamma(M)$). It then holds that
\[
(u_\eps,\varphi~\dd\vol) \to_{\eps \to 0} (u,\varphi~\dd\vol), \qquad (\mathbf{r}_{L(x)}(u_\eps), \varphi  \chi_x \mu_x) \to_{\eps \to 0} (\mathbf{r}_{L(x)}(u), \varphi \chi_x  \mu_x).
\]
In addition, one verifies that $|(\mathbf{r}_{L(x)}(u_\eps), \chi_x \varphi \mu_x)| \leq C$ for all $\eps > 0$, where $C > 0$ is a uniform constant. Applying \eqref{equation:intrinsic} with $u_\eps$ and using the Lebesgue dominated convergence on the right-hand side, we find that \eqref{equation:fubini} holds for $u \in \mc{D}'(M)$ satisfying the assumptions of the lemma. 
\end{proof}

\subsection{Pollicott-Ruelle resonances}

 \label{ssection:resonances} 
In this paragraph, we further assume that the flow $(\varphi_t)_{t \in \R}$ is Anosov. The resolvents
\[
\mathbf{R}_\pm(\lambda) := (-\X-\lambda)^{-1} : C^\infty(M,E) \to L^2(M,E)
\]
are well defined for $\Re(\lambda) \gg 0$ using the converging integrals
\[
(-\X-\lambda)^{-1} = - \int_0^{+\infty} e^{\mp t\X} e^{-\lambda t} \dd t.
\]
It was established in \cite{Faure-Sjostrand-11, Dyatlov-Zworski-16} that these operators admits a meromorphic extension to $\C$ as maps
\[
\mathbf{R}_+(\lambda) : C^\infty(M,E) \to \mc{D}'_{E_u^\perp}(M,E), \qquad \mathbf{R}_-(\lambda) : C^\infty(M,E) \to \mc{D}'_{E_s^\perp}(M,E)
\]
with poles of finite rank (see also \cite[Chapter 9]{Lefeuvre-book}). These poles are called the \emph{Pollicott-Ruelle resonances}. To each resonances $\lambda_0 \in \C$, one can associate a spectral projector
\[
\Pi^\pm_{\lambda_0} := - \dfrac{1}{2i\pi} \int_\gamma \mathbf{R}_+(\lambda) \dd \lambda,
\]
where $\gamma$ is a small contour around $\lambda_0$. The range of $\Pi^\pm_{\lambda_0}$ is finite-dimensional, included in $\mc{D}'_{E_u^\perp}(M,E)$ (resp. $\mc{D}'_{E_s^\perp}(M,E)$) in the $+$ case (resp. in the $-$ case), and there exists an integer $\ell \geq 1$ such that
\[
\begin{split}
\Pi^+_{\lambda_0} & = \{u \in \mc{D}'_{E_u^\perp}(M,E) ~:~ (-\X-\lambda_0)^\ell u = 0\}, \\
 \Pi^-_{\lambda_0} & = \{u \in \mc{D}'_{E_s^\perp}(M,E) ~:~ (+\X-\lambda_0)^\ell u = 0\}.
 \end{split}
\]
Elements in the range of $\Pi_{\lambda_0}^+$ (resp. $\Pi_{\lambda_0}^-$) are called \emph{generalized Pollicott-Ruelle resonant states} (resp. \emph{coresonant states}).

\section{Proof of the regularity statements} \label{section:proofs}

The proofs of Theorems \ref{corollary:main} and \ref{theorem:main} make use of the leafwise semiclassical pseudodifferential calculus introduced in \S\ref{section:leafwise-calculus}. It is applied with $\mc{F} = W^u$ an expanding foliation for a flow $\varphi_t: M \to M$ on which there is a leafwise smooth bundle $E \to M$ and a fiberwise linear extension of the flow $\boldsymbol{\varphi}_t : E \to E$.

\subsection{Proof of propagation estimates}

\label{ssection:proof-propagation-estimates}

We begin our proof of Theorems \ref{corollary:main} and \ref{theorem:main} with a preliminary lemma. Given $x \in M$, we define
\[
\chi_x \in C^\infty_{\comp}(L(x)), \qquad \chi_x(y) := \chi(d_{L(x)}(x,y)),
\]
where $\chi \in C^\infty_{\comp}(\R)$ is a smooth non-negative bump function equal to $1$ on $[-1/2,1/2]$ and $0$ outside of $[-1,1]$, and $d_{L(x)}$ denotes the Riemannian distance induced on the leaf. In the following, $\boldsymbol{\Delta}_{W^u}$ denotes the leafwise Laplacian acting on sections of $E$  as defined in \eqref{equation:laplacien} (we drop the index $E$ for simplicity). When the operator is restricted to a single leaf $W^u(x)$, we shall use the notation $\boldsymbol{\Delta}_{W^u(x)}$. For $x \in M$  we also write $x(t) = \varphi_{-t}(x).$ 

\begin{lemma}
\label{lemma:key}
There exists $T_0 > 0$ such that for all $N \geq 0$ and $T \geq 0$, there exists $h_0, C_3 > 0$ such that for all $t \in [T_0,T]$, $h \in (0,h_0]$, $x_\star \in M$, $x \in W^u(x_\star)$, and for all $u \in \mc{D}'(W^u(x(t)),E)$ such that $\chi_{x(t)}u\in H^{2N}_{\mathrm{comp}.}(W^u(x(t)))$, the following inequality holds:
\[
\begin{split}
\| e^{t\X} h^{2N} &\boldsymbol{\Delta}^N_{W^u(x_\star)} e^{-t\X} [(\varphi_t^* \chi_x) u] \|_{L^2(W^u(x_\star(t)))}^2  \\
&\leq C_2^{4N} e^{-4 \lambda_2 N t} \|h^{2N} \boldsymbol{\Delta}^N_{W^u(x_\star(t))} [\chi_{x(t)} u]\|^2_{L^2(W^u(x_\star(t)))} + h C_3 \|\chi_{x(t)} u \|^2_{H^{2N}_h(W^u(x_\star(t)))}. 
\end{split}
 \] 
\end{lemma}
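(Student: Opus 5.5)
The plan is to reduce the lemma to Lemma \ref{lemma:garding}, applied leafwise on $W^u(x_\star(t))$ with two principally diagonal operators of order $2N$. We set $m=N$ and take
\[
\mathbf{A} := e^{t\X}\, h^{2N}\boldsymbol{\Delta}^N_{W^u}\, e^{-t\X}, \qquad \mathbf{B} := C_2^{2N} e^{-2\lambda_2 N t}\, h^{2N}\boldsymbol{\Delta}^N_{W^u}.
\]
Both are understood as leafwise operators, possibly after inserting a cutoff near the diagonal, since conjugation by $e^{t\X}$ enlarges the off-diagonal support by a factor that depends only on $T$. The function $w:=\chi_{x(t)}u$ belongs to $H^{2N}_{\comp}(W^u(x_\star(t)))$. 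Since $\varphi_t^*\chi_x$ is supported in $\varphi_{-t}(\{d(x,\cdot)\le 1\})$, which lies inside the region where $\chi_{x(t)}=1$ once $t\ge T_0$ (backward contraction), we can write $(\varphi_t^*\chi_x)u = (\varphi_t^*\chi_x)w$. The multiplication by $\varphi_t^*\chi_x$ will be handled through the symbol calculus, commutators giving $\mathcal{O}(h)$ terms in $\Psi^{2N-1}$. Alternatively, one can absorb $\varphi_t^*\chi_x$ into $\mathbf{A}$ and take $\mathbf{A}':=\mathbf{A}\circ(\varphi_t^*\chi_x)$; its principal symbol is the symbol of $\mathbf{A}$ multiplied by $\varphi_t^*\chi_x\le 1$.

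The symbol computation comes next. By Lemma \ref{lemma:egorov} the operator $\mathbf{A}$ lies in $\Psi^{2N}_{h,\leaf}$, is principally diagonal, and has principal symbol
\[
\sigma_{\mathbf{A}}(y,\xi)=|d\varphi_t^{-\top}(y)\xi|^{2N}.
\]
Here we use the principal symbol $|\xi|^2$ of $h^2\boldsymbol{\Delta}$ given in \S\ref{ssection:examples} and multiplicativity from Proposition \ref{proposition:pseudo}. The bound \eqref{equation:growth2} then gives $0\le\sigma_{\mathbf{A}}\le C_2^{2N}e^{-2\lambda_2Nt}|\xi|^{2N}=\sigma_{\mathbf{B}}$, and the same inequality persists after multiplying by $(\varphi_t^*\chi_x)^2\le1$. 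Lemma \ref{lemma:garding} therefore yields
\[
\|\mathbf{A}_L w\|^2\le \|\mathbf{B}_L w\|^2 + Ch\|w\|^2_{H^{2N}_h(L)}, \qquad L=W^u(x_\star(t)).
\]
After squaring, $\|\mathbf{B}_Lw\|^2 = C_2^{4N}e^{-4\lambda_2Nt}\|h^{2N}\boldsymbol{\Delta}^N_L w\|^2$, which is exactly the main term of the lemma. The constants $C$, $C_3$ and $h_0$ are uniform over $t\in[T_0,T]$, because the symbols of the family $\{\mathbf{A}\}_{t\in[T_0,T]}$ are bounded uniformly in $S^{2N}_{h,\leaf}$; here we use the compactness of $[T_0,T]$ together with the continuous dependence on $t$ in Lemma \ref{lemma:egorov-leafwise}.

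The main obstacle is bookkeeping. We have to identify the operator $e^{t\X}h^{2N}\boldsymbol{\Delta}^N_{W^u(x_\star)}e^{-t\X}$ acting on functions on $W^u(x_\star(t))$ with the restriction $\mathbf{A}_L$ of a global leafwise operator, in the sense of Lemma \ref{lemma:induced-operator}. We also have to check that the remainder terms produced by Gårding, together with the commutators with the cutoff, are controlled by $\|\chi_{x(t)}u\|_{H^{2N}_h}$ rather than by a norm of $u$ on a larger set. To handle this, I would first choose $T_0$ so that all kernels involved, including those of the commutators, are supported where $\chi_{x(t)}=1$. Residual terms are then $\mathcal{O}(h^\infty)$ by Lemma \ref{lemma:uniform-bound}, and the $\Psi^{2N-1}$ commutator errors are bounded by $Ch\|w\|^2_{H^{2N}_h}$ as in the proof of Lemma \ref{lemma:garding}.
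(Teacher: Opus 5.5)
Your proposal is correct and follows the paper's proof. Egorov (Lemma~\ref{lemma:egorov}) gives the principal symbol $|d\varphi_t^{-\top}\xi|^{2N}$, the bound \eqref{equation:growth2} dominates it by the symbol of $\mathbf{B}=C_2^{2N}e^{-2\lambda_2 N t}h^{2N}\boldsymbol{\Delta}^N_{W^u}$, Lemma~\ref{lemma:garding} is applied on $W^u(x_\star(t))$, and $T_0$ is chosen so that $\varphi_t^*\chi_x$ is supported in $\{\chi_{x(t)}=1\}$. The only difference is how you treat the cutoff: the paper applies Gårding to $(\varphi_t^*\chi_x)u$ and uses Lemma~\ref{lemma-quickcomputation} only to convert the remainder, whereas you apply Gårding to $w=\chi_{x(t)}u$, either absorbing the cutoff into the operator (in which case your display should read $\mathbf{A}'_L w$, not $\mathbf{A}_L w$) or commuting it through at $\mathcal{O}(h)$ cost. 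Your arrangement has the small advantage of producing the main term with $\chi_{x(t)}u$ directly, a replacement the paper leaves implicit.
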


Recall that the constants $C_2, \lambda_2 > 0$ were defined in \eqref{equation:growth2}. Although it will not be needed for the purposes of what follows, we emphasize that the last term in the estimate of Lemma \ref{lemma:key} could be slightly improved to $h C_3 \|\chi_{x(t)} u \|^2_{H^{2N-1/2}_h(W^u(x_\star(t)))}$ (see the comment following Lemma \ref{lemma:garding}).

\begin{proof}
Observe that
\[
h^{2N} \boldsymbol{\Delta}^N_{W^u} = \Op_{h,\leaf}(|\xi|^{2N}_g) +\mc{O}_{\Psi^{2N-1}_{h,\mathrm{leaf}}}(h),
\]
where $|\bullet|_g^{2N} \in S^{2N}_{h,\leaf}(T^*W^u)$. From now on we will no longer include reference to the metric $g$. 

Notice that $h^{2N} \boldsymbol{\Delta}^N_{W^u}$ is principally diagonal. By the leafwise version of Egorov's theorem (Lemma \ref{lemma:egorov}), $e^{t\X} h^{2N} \boldsymbol{\Delta}^N_{W^u} e^{-t\X}$ is also principally diagonal and we have that:
\[
e^{t\X} h^{2N} \boldsymbol{\Delta}^N_{W^u} e^{-t\X} = \Op_h(|d\varphi_t^{-\top}(x(t))\xi|^{2N}) + \mc{O}_{\Psi^{2N-1}_{h,\mathrm{leaf}}}(h).
\]
Using \eqref{equation:growth2}, we see that
\[
|d\varphi_t^{-\top}(x(t))\xi|^{2N} \leq C_2^{2N} e^{-2 \lambda_2 N t} |\xi|^{2N}, \qquad \forall t \geq 0.
\]
Applying the consequence of the leafwise sharp Gårding inequality (see Lemma \ref{lemma:garding}) with the leaf
\[
L := W^u(x_\star(t)),
\]
and \eqref{equation:h-equivalence-normes} we find that for all $f \in H^{2N}_{\comp}(W^u( x_\star(t)))$, 
\[
\begin{split}
\|e^{t\X} h^{2N} \boldsymbol{\Delta}^N_{W^u(x_\star)} e^{-t\X} f\|_{L^2(L)}^2  \leq C_2^{4N} e^{-4\lambda_2 N t} \|h^{2N} \boldsymbol{\Delta}^N_{L} f\|^2_{L^2(L)} + h C(t) \|f\|^2_{H^{2N}_h(L)},
\end{split}
\] 
where the constant $C(t) > 0$ depends on time (however, for $t \in [0,T]$, $C(t)$ is uniformly bounded by a constant depending only on $T > 0$).

We then apply this inequality with $(\varphi_t^* \chi_x) u \in H^{2N}_{\comp}(L)$. We find that
\[
\begin{split}
\|e^{t\X} h^{2N} \boldsymbol{\Delta}^N_{W^u(x_\star)} e^{-t\X}(\varphi_t^* \chi_x) u\|_{L^2(L)}^2  & \leq C_2^{4N} e^{-4\lambda_2 N t} \|h^{2N} \boldsymbol{\Delta}^N_{L} (\varphi_t^* \chi_x) u\|^2_{L^2(L)} \\
&\qquad + h C(t) \|(\varphi_t^* \chi_x) u\|^2_{H^{2N}_h(L)}.
\end{split}
\]
By expansivity of the flow, for $t \geq T_0$ where $T_0$ is chosen large enough, one has $\varphi_t^* \chi_x = \chi_x \circ \varphi_{t} \leq \chi_{x(t)}$ and $\chi_x \circ \varphi_{t}$ is supported in $\{\chi_{x(t)}=1\}$. Lemma \ref{lemma-quickcomputation} then gives:
\[
\|(\varphi_t^* \chi_x) u\|^2_{H^{2N}_h(L)} \leq C'(t) \|\chi_{x(t)} u\|^2_{H^{2N}_h(L)},
\]
for some (other) constant $C'(t) > 0$. The same remark as above applies to $C'(t)$; it is uniformly bounded for $t \in [0,T]$. Inserting this bound in the previous estimate, we obtain the inequality claimed in the lemma. Notice that the constant $C_3$ is given by $C_3 = \sup_{t \in [0,T]} C(t) \cdot \sup_{t \in [0,T]} C'(t)$.

\end{proof}

The following estimate allows to bring the bounds back to a fixed leaf and obtain contraction on the Sobolev norm:

\begin{lemma}
\label{lemma:marcelle}
Let $N \geq 0$ be an integer. For all $T > 0$, there exists $C_3 := C_3(T) > 0$ such that for all $h > 0$ small enough, for all vector-valued distributions $u \in \mc{D}'(W^u( x(T)),E)$, for all $x \in M$ such that
\[
\chi_{ x(T)} u \in \overline{H}_h^{2N}(W^u(x(T)), E),
\]
one has
\[
\chi_x e^{-T\X} u \in \overline{H}_h^{2N}(W^u(x),E).
\]
In addition, the following inequality holds:
\[
\begin{split}
\| \chi_x e^{-T\X} u \|_{\overline{H}_h^{2N}(W^u(x))}^2 
& \leq \| \chi_x e^{-T\X} u \|_{L^2(W^u(x))}^2  +  \omega(T,h)  \|\chi_{ x(T)} u \|_{\overline{H}_h^{2N}(W^u(x(T)))}^2,
\end{split}
\]
where
\begin{equation}
\label{equation:omegath}
\omega(T,h) := C_1 C_2^{4N} e^{(\lambda_1-4 \lambda_2 N) T} + C_0C_1C_3 e^{\lambda_1 T} h.
\end{equation}
\end{lemma}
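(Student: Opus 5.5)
Since $\|f\|^2_{\overline{H}^{2N}_h} = \|f\|^2_{L^2} + \|h^{2N}\boldsymbol{\Delta}^N f\|^2_{L^2}$, it suffices to bound the homogeneous part $\|h^{2N}\boldsymbol{\Delta}^N_{W^u(x)}(\chi_x e^{-T\X}u)\|^2_{L^2(W^u(x))}$ by $\omega(T,h)\|\chi_{x(T)}u\|^2_{\overline{H}^{2N}_h}$. The plan is to pull this quantity back to the leaf $W^u(x(T))$ with the propagator, and then apply Lemma \ref{lemma:key} with $x_\star = x$ and $t=T$.

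First, commute the cutoff through the propagator. Since $e^{-T\X}$ acts by composition with $\varphi_{-T}$ followed by the fiberwise linear map, $\chi_x e^{-T\X}u = e^{-T\X}[(\varphi_T^*\chi_x)u]$. Then write
\[
h^{2N}\boldsymbol{\Delta}^N_{W^u(x)} e^{-T\X}[(\varphi_T^*\chi_x) u] = e^{-T\X}\bigl(e^{T\X}h^{2N}\boldsymbol{\Delta}^N_{W^u(x)}e^{-T\X}[(\varphi_T^*\chi_x)u]\bigr).
\]
Apply the $L^2$ growth estimate \eqref{equation:growth1} to the outer $e^{-T\X}$, which maps sections on $W^u(x(T))$ to sections on $W^u(x)$. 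This gives
\[
\|h^{2N}\boldsymbol{\Delta}^N(\chi_x e^{-T\X}u)\|^2_{L^2(W^u(x))} \le C_1 e^{\lambda_1 T}\,\|e^{T\X}h^{2N}\boldsymbol{\Delta}^N_{W^u(x)}e^{-T\X}[(\varphi_T^*\chi_x)u]\|^2_{L^2(W^u(x(T)))}.
\]

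Next, Lemma \ref{lemma:key} with $x_\star=x$ (so $x\in W^u(x_\star)$) bounds the last norm by
\[
C_2^{4N}e^{-4\lambda_2NT}\|h^{2N}\boldsymbol{\Delta}^N[\chi_{x(T)}u]\|^2_{L^2} + hC_3\|\chi_{x(T)}u\|^2_{H^{2N}_h}.
\]
The first term is at most $C_2^{4N}e^{-4\lambda_2NT}\|\chi_{x(T)}u\|^2_{\overline{H}^{2N}_h}$. Lemma \ref{lemma:sobolev} converts the second term into $hC_0C_3\|\chi_{x(T)}u\|^2_{\overline{H}^{2N}_h}$. Multiplying by $C_1e^{\lambda_1T}$ gives exactly $\omega(T,h)$ as in \eqref{equation:omegath}. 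Adding $\|\chi_xe^{-T\X}u\|^2_{L^2}$ then yields the stated inequality.

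For the qualitative membership claim, argue as follows. The right-hand side is finite, and $\chi_xe^{-T\X}u$ is compactly supported. It is in $L^2$ because $\chi_x e^{-T\X}u = e^{-T\X}[(\varphi_T^*\chi_x)u]$ with $(\varphi_T^*\chi_x)u = (\varphi_T^*\chi_x)\chi_{x(T)}u \in L^2$, using the support property from the proof of Lemma \ref{lemma:key}. Hence it lies in $\overline{H}^{2N}_h$. To justify the formal computations for a distribution $u$, I would first prove the estimate for $\chi_{x(T)}u$ smooth and compactly supported, then pass to the limit by density.

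The main obstacle is the bookkeeping in applying Lemma \ref{lemma:key}. That lemma requires $T\ge T_0$, so that $\varphi_T^*\chi_x$ is supported in $\{\chi_{x(T)}=1\}$; for $T<T_0$ one must either assume $T\ge T_0$ or absorb the difference into the constants. One must also check that the leaf indices match: the Laplacian is taken on $W^u(x)$ and the output estimate on $W^u(x(T))$.
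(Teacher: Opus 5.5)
Your proposal is correct and follows the paper's proof essentially step by step: you commute the cutoff through the propagator, apply the $L^2$ growth bound \eqref{equation:growth1}, invoke Lemma \ref{lemma:key} at $t=T$ with $x_\star=x$, and use Lemma \ref{lemma:sobolev} to convert $H^{2N}_h$ into $\overline{H}^{2N}_h$, which assembles exactly $\omega(T,h)$. Your caveat that Lemma \ref{lemma:key} only applies for $T \geq T_0$ is well taken. The statement says ``for all $T>0$'', but the lemma is only ever used with $T$ large.
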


The constant $C_0$ is the one from Lemma \ref{lemma:sobolev}, $C_1$ is defined in \eqref{equation:growth1} and $C_2$ in \eqref{equation:growth2}.

\begin{proof}
We have:
\[ 
\begin{split}
\|\chi_x e^{-t\X} u\|^2_{\dot{H}^{2N}_h(W^u(x))} & = \| h^{2N} \boldsymbol{\Delta}^N_{W^u(x)} \chi_x e^{-t\X} u \|_{L^2(W^u(x))}^2 \\
& = \|  e^{-t\X} e^{t\X} h^{2N} \boldsymbol{\Delta}^N_{W^u(x)} e^{-t\X} [(\varphi_t^* \chi_x) u] \|_{L^2(W^u(x))}^2  \\
& \leq C_1 e^{\lambda_1 t} \| e^{t\X} h^{2N} \boldsymbol{\Delta}^N_{W^u(x)} e^{-t\X} [(\varphi_t^* \chi_x) u] \|_{L^2(W^u(x(t)))}^2,
\end{split}
\]
using \eqref{equation:growth1} in the last line.

Applying Lemma \ref{lemma:key} with this fixed time $T > 0$, we find that there exists $C_3 := C_3(T) > 0$ such that:
\[
\begin{split}
\|\chi_x e^{-T\X} u\|^2_{\dot{H}^{2N}_h(W^u(x))} & \leq C_1 C_2^{4N} e^{(\lambda_1-4 \lambda_2 N) T} \|h^{2N} \boldsymbol{\Delta}^N_{W^u(x_\star(T))} \chi_{x(T)} u\|^2_{L^2(W^u(x(T)))} \\
& \hspace{3cm}+ C_1 C_3(T) e^{\lambda_1 T}  h\| \chi_{x(T)}u \|^2_{H^{2N}_h(W^u(x(T)))} \\
& = C_1 C_2^{4N} e^{(\lambda_1-4 \lambda_2 N) T} \| u\chi_{ x(T)} \|_{\dot{H}_h^{2N}(W^u(x(T)))}^2 \\
& \hspace{3cm}+ C_1 C_3(T) e^{\lambda_1 T}  h \|\chi_{x(T)} u\|^2_{H^{2N}_h(W^u(x(T)))}.
\end{split}
 \]
 By Lemma \ref{lemma:sobolev}, there exists a constant $C_0 > 0$ such that 
 \[
\|\chi_{x(T)} u\|^2_{H^{2N}_h(W^u(x(T))} \leq C_0 \|\chi_{x(T)} u\|^2_{\overline{H}^{2N}_h(W^u(x(T))}
 \]
 Inserting this in the previous estimate, we find that:
 \begin{equation}
 \label{equation:cool0}
 \begin{split}
 \| \chi_x e^{-T\X} u \|_{\dot{H}_h^{2N}(W^u(x))}^2  \leq \underbrace{\left(C_1 C_2^{4N} e^{(\lambda_1-4 \lambda_2 N) T} + C_0C_1C_3 e^{\lambda_1 T} h \right)}_{=\omega(T,h)}  \|\chi_{ x(T)} u\|_{\overline{H}_h^{2N}(W^u( x(T)))}^2.
 \end{split}
 \end{equation}
Using the definition of the norm on $\overline{H}_h^{2N}$ (see \S\ref{ssection:sobolev-norms}), this proves the claimed estimate.
\end{proof}

We can now conclude the proof of the propagation estimates. Let us first consider the easier case of Theorem \ref{corollary:main}:

\begin{proof}[Proof of Theorem \ref{corollary:main}]
With $\eps = 1$, the inequality \eqref{equation:bound-main-corollary} boils down to
\begin{equation}
\label{equation:todo}
\| \chi_{x} u|_{W^u(x)}\|_{H^{2N}(W^u(x))} \leq C (\|u\|_{C^0(M,E)} + e^{-\nu t}\|\chi_{x_0} u|_{W^u(x_0)}\|_{H^{2N}(W^u(x_0))}).
\end{equation}
The proof of this estimate relies on an iteration of Lemma \ref{lemma:key}. Define

\[
\mathbf{v}_1 := \sup_{x \in M} \vol_{g|_{W^u(x)}}(B_u(x,1)).
\]
Using that $e^{T\X} u = e^{\lambda T} u$ for some $\lambda \in \C$, Lemma \ref{lemma:marcelle} yields:
\begin{equation}
\begin{split}
\label{equation:nuit}
&\| \chi_x (u|_{W^u(x)}) \|_{\overline{H}_h^{2N}(W^u(x))}^2  \\
& \qquad   \leq \|\chi_x (u|_{W^u(x)})\|^2_{L^2(W^u(x))} + e^{2 \Re(\lambda) T} \omega(T,h) \|\chi_{ x(T)}( u|_{W^u(x(T))})\|_{\overline{H}_h^{2N}(W^u(x(T)))}^2 \\
& \qquad  \leq \mathbf{v}_1 \|u\|^2_{C^0(M,E)} + e^{2 \Re(\lambda) T}\omega(T,h)  \|\chi_{x(T)}( u|_{W^u(x(T))})\|_{\overline{H}_h^{2N}(W^u(x(T)))}^2.
 \end{split}
\end{equation}
We have:
\[
e^{2 \Re(\lambda) T}\omega(T,h) = C_1 C_2^{4N} e^{(2 \Re(\lambda) + \lambda_1-4 \lambda_2 N) T} + C_0C_1C_3 e^{(2 \Re(\lambda) + \lambda_1) T} h
\]
We fix an integer $N \geq 0$ such that that $2 \Re(\lambda) + \lambda_1-4 \lambda_2 N < 0$, that is
\begin{equation}
\label{equation:n-threshold}
N > \dfrac{2\Re(\lambda) + \lambda_1}{4 \lambda_2} = \Re(\lambda)/2\lambda_2 + \boldsymbol{\mu}, \qquad \boldsymbol{\mu} := \lambda_1/4\lambda_2
\end{equation}
We then choose $T > 0$ such that 
\[
C_1 C_2^{4N} e^{(2 \Re(\lambda) + \lambda_1-4 \lambda_2 N) T} \leq 1/4.
\]
Finally, we\footnote{We note that it is the choice of $h > 0$ which makes our constants depend on higher order derivatives of the flow. Indeed, constants $C_1$ and $C_2$ can be chosen uniformly in a $C^1$-neighborhood of the flow. However, $C_3$ depends on higher order derivatives as it uses Lemma \ref{lemma:garding}.} choose $h > 0$ small enough such that 
\[
C_0C_1C_3 e^{(2 \Re(\lambda) + \lambda_1) T} h \leq 1/4.
\]
This implies that $e^{2 \Re(\lambda) T}\omega(T,h) \leq 1/2$ and thus by \eqref{equation:nuit}:

\begin{equation}
\label{equation:nuit2}
\| \chi_x( u|_{W^u(x)}) \|_{\overline{H}_h^{2N}(W^u(x))}^2 \leq \mathbf{v}_1 \|u\|^2_{C^0(M,E)} + \dfrac{1}{2} \|\chi_{ x(T)} (u|_{W^u(x(T))})\|_{\overline{H}_h^{2N}(W^u( x(T)))}^2.
\end{equation}

Now, suppose $x \in \varphi_{nT}(W^u(x_0,1/2))$. Define for $0 \leq k \leq n$:
\[
a_k := \|\chi_{x((n-k)T)} u|_{W^u(x((n-k)T))}\|^2_{\overline{H}^{2N}_h(W^u(x((n-k)T)))}.
\]
By \eqref{equation:nuit}, we have:
\[
a_{k+1} \leq \mathbf{v}_1 \|u\|_{C^0}^2 + a_k/2.
\]
This implies that $a_n \leq 2 \mathbf{v}_1 \|u\|_{C^0}^2 + a_0/2^n$, which is exactly \eqref{equation:todo} for times $t=nT$ with explicit constants $C$ and $\nu$. To pass to arbitrary times, it suffices to propagate for times between $0$ and $T$; this accounts for the non-explicit constant in the theorem. Observe, however, that this constant is uniform with respect to the flow. Finally, to prove \eqref{equation:bound-main-corollary} for $\eps \leq 1$, it suffices to use that the propagator $e^{t\X}$ has an exponential bound on $H^{2N}$; this accounts for the factor $\eps^{-\nu}$.

For the proof of \eqref{equation:bound-main2-corollary}, consider $x \in \Lambda_+(x_0,\eps)$ and a sequence $x_n \to x$, $x_n = \varphi_{t_n}(y_n)$ with $y_n \in W^u_{\eps/2}(x_0)$, $t_n \to +\infty$. Let $u_n := u|_{W^u_1(x_n)}$, and $u_\infty := u|_{W^u_1(x)}$. Applying \eqref{equation:bound-main-corollary}, we obtain:
\begin{equation}
\label{equation:train-rennes}
\|u_n\|_{H^{2N}(W^u_1(x_n))} \leq C\left(\|u\|_{C^0(M,E)} + \eps^{-\nu} e^{-\nu t_n} \|u\|_{H^{2N}(W^u_\eps(x_0))}\right).
\end{equation}
Let $\varphi \in C^\infty(M)$. Then $(u_n, \varphi \chi_{x_n} \mu_{x_n}) \to_{n \to +\infty} (u_\infty,\varphi \chi_x \mu_x)$ by continuity of $u \in C^0(M,E)$. In addition:
\[
\begin{split}
|(u_n,\varphi \chi_{x_n} \mu_{x_n})| & \leq \|u_n\|_{H^{2N}(W^u_1(x_n))} \|\varphi \chi_{x_n} \mu_{x_n}\|_{H^{-2N}(W^u_1(x_n))} \\
& \leq C'\|u_n\|_{H^{2N}(W^u_1(x_n))} \|\varphi\|_{H^{-2N}(W^u_1(x_n))},
\end{split}
\]
for some uniform constant $C' > 0$. Using \eqref{equation:train-rennes}, and passing to the limit in the previous inequality, we obtain
\[
|(u,\varphi \mu_x)| \leq C C' \|u\|_{C^0(M,E)} \|\varphi\|_{H^{-2N}(W^u_1(x))},
\]
which implies that $u \in H^{2N}(W^u_1(x))$ with norm $\leq CC'\|u\|_{C^0(M,E)}$. The constant $C$ in the statement of Theorem \ref{corollary:main} is the max between the constants $C$ and $CC'$ defined here. By construction, it is clear that they depend uniformly on the flow.
\end{proof}

The proof of Theorem \ref{theorem:main} is almost identical. Let us indicate the main differences: 

\begin{proof}[Proof of Theorem \ref{theorem:main}]
Let $u \in \mc{D}'_{\Gamma}(M,E)$ such that $(\X-\lambda) u = 0$ and assume that $u$ is $H^{2N}$ is restriction to $W^u_\eps(x_0)$. The same argument as in the proof of Theorem \ref{corollary:main} applies, and we find, similarly to \eqref{equation:nuit},
\begin{equation}
\begin{split}
\label{equation:rer}
&\| \chi_x u|_{W^u(x)} \|_{\overline{H}_h^{2N}(W^u(x))}^2  \\
& \qquad   \leq \|\chi_x u|_{W^u(x)}\|^2_{L^2(W^u(x))} + e^{2 \Re(\lambda) T} \omega(T,h) \|\chi_{\varphi_{-T} x} u|_{W^u(\varphi_{-T}x)}\|_{\overline{H}_h^{2N}(W^u(\varphi_{-T} x))}^2,
\end{split}
\end{equation}
where $\omega(T,h)$ was defined in \eqref{equation:omegath}. We then bound the $L^2$ norm on the right-hand side of \eqref{equation:rer} using \eqref{equation:fourche}:
\[
\|\chi_x u|_{W^u(x)}\|^2_{L^2(W^u(x))} \leq \dfrac{1}{2} \|\chi_x u\|^2_{\overline{H}^{-2N}(W^u(x))} + \dfrac{1}{2}\|\chi_x u\|^2_{\overline{H}^{2N}(W^u(x))}.
\]
Inserting the previous estimate in \eqref{equation:rer}, we obtain:
\begin{equation}
\label{equation:feu}
\begin{split}
&\| \chi_x u|_{W^u(x)} \|_{\overline{H}_h^{2N}(W^u(x))}^2  \\
& \qquad   \leq  \|\chi_x u\|^2_{\overline{H}^{-2N}(W^u(x))}  + 2 \cdot e^{2 \Re(\lambda) T} \omega(T,h) \|\chi_{\varphi_{-T} x} u|_{W^u(\varphi_{-T}x)}\|_{\overline{H}_h^{2N}(W^u(\varphi_{-T} x))}^2,
\end{split}
\end{equation}
As in \eqref{equation:n-threshold}, we assume that $N > \Re(\lambda)/2\lambda_2 + \boldsymbol{\mu}$. By Lemma \ref{lemma:restriction}, we have that
\[
 \|\chi_x u\|^2_{\overline{H}^{-2N}(W^u(x))} \leq C\left(\|u\|^2_{H^{-2N+d^\perp/2 + \gamma}} + \|u\|^2_{\Gamma,N}\right).
\]
Inserting the previous estimate in \eqref{equation:feu}, and taking $T \gg 1$ large enough to ensure $2 \cdot e^{2 \Re(\lambda) T} \omega(T,h) \leq 1/4$, we obtain:
\[
\begin{split}
&\| \chi_x u|_{W^u(x)} \|_{\overline{H}_h^{2N}(W^u(x))}^2  \\
& \qquad  \leq  C\left(\|u\|^2_{H^{-2N+d^\perp/2 + \gamma}} + \|u\|^2_{\Gamma,N}\right)  + \dfrac{1}{4} \|\chi_{\varphi_{-T} x} u|_{W^u(\varphi_{-T}x)}\|_{\overline{H}_h^{2N}(W^u(\varphi_{-T} x))}^2.
\end{split}
\]
Finally, the same argument by induction as in the proof of Corollary \ref{corollary:main} allows to conclude and proves \eqref{equation:bound-main}. To prove \eqref{equation:bound-main2}, it suffices to follow the proof of \eqref{equation:bound-main2-corollary}; however, this time, one has to use Lemma \ref{lemma:continuity-leafwise-integration} to pass to the limit as $n \to +\infty$.
\end{proof}

\subsection{Global smoothness} \label{ssection:pr}We now prove Corollaries \ref{corollary:utile}, \ref{corollary:smoothness} and \ref{corollary:support}. In this paragraph, the foliation $\mc{F}$ is always assumed to be expanded by the flow.

\subsubsection{Propagating pieces of the foliation leaves} We will need a preliminary observation. Given $x \in M$ and $V \subset L(x)$, an open set of the leaf of $x$, we let
\[
\Lambda_+(V) :=\bigcap_{T > 0}  \overline{\bigcup_{t \geq T} \varphi_t(V)}.
\]
Recall that $\mc{F}$ is minimal if each leaf is dense in $M$. The following holds:

\begin{lemma}
\label{lemma:matin}
If $\mc{F}$ is minimal, then $\Lambda_+(V) = M$.
\end{lemma}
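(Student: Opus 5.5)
Since $\Lambda_+(V)$ is an intersection of closed sets, it is closed. The plan is to show that every $z \in M$ lies in it. Fix $z \in M$, a neighborhood $U$ of $z$, and $T > 0$. We must produce $t \geq T$ with $\varphi_t(V) \cap U \neq \emptyset$. The idea is to use the expansion of the leaves to make $\varphi_t(V)$ contain arbitrarily large leafwise balls, and then use minimality in a uniform form to force such large balls to meet $U$.

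\emph{Step 1: uniform minimality.} We first prove a compactness lemma: for every nonempty open $U \subset M$ there exists $R = R(U) > 0$ such that for all $y \in M$, the leafwise ball $W^u_R(y)$ intersects $U$. Suppose not. Then there are $y_n$ with $W^u_n(y_n) \cap U = \emptyset$. Pass to a subsequence $y_n \to y$. By minimality, $W^u(y)$ is dense, so some point $W^u_{R_0}(y)$ lies in $U$, say at leafwise distance $R_0$ from $y$. Shrinking $U$ to a smaller open set $U'$ with $\overline{U'} \subset U$, we may assume this point lies in $U'$.

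We then use the transverse continuity of the foliation (Definition \ref{definition:foliation}). Cover a compact leafwise path from $y$ to that point by finitely many admissible charts $\Cchart$. In each chart the plaques depend continuously on $x_1$. It follows that for $y_n$ close to $y$, the leafwise ball $W^u_{R_0+1}(y_n)$ contains a point close to the point of $W^u(y)\cap U'$, hence a point of $U$. This contradicts $W^u_n(y_n)\cap U=\emptyset$ for $n > R_0+1$. This plaque-following argument (continuity of leafwise balls of bounded radius in the base point) is the only technical step, and it is routine given the admissible charts.

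\emph{Step 2: expansion.} Pick $x' \in V$ and $\delta>0$ with $W^u_\delta(x') \subset V$. By \eqref{equation:lambda2-introduction}, $\varphi_{-t}$ contracts leafwise distances by a factor $Ce^{-\lambda t}$. Hence $\varphi_t(W^u_\delta(x')) \supset W^u_{\delta e^{\lambda t}/C}(\varphi_t x')$: any point within leafwise distance $r$ of $\varphi_t x'$ is joined to it by a leafwise path, and pulling that path back by $\varphi_{-t}$ shortens it to length at most $Cre^{-\lambda t}$. Choose $t \geq T$ with $\delta e^{\lambda t}/C \geq R(U)$. By Step 1, $\varphi_t(V)$ then meets $U$.

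Since $U$ and $T$ were arbitrary, $z$ lies in $\overline{\bigcup_{t\geq T}\varphi_t(V)}$ for every $T$. Hence $z \in \Lambda_+(V)$, and so $\Lambda_+(V) = M$.
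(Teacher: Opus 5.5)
Your proof is correct and follows the same route as the paper. First, a compactness argument using the transverse continuity of the foliation gives a uniform leafwise radius beyond which every leaf ball meets the target open set; the paper phrases this as upper semicontinuity, hence boundedness, of $x \mapsto R_\eps(x)$, the radius needed for $\eps$-density, while you argue by contradiction with a convergent sequence. Second, leafwise expansion makes $\varphi_t(V)$ contain such balls, a step you spell out and the paper leaves as ``easily implies''.
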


\begin{proof}
We claim that for all $\eps > 0$, there exists $R > 0$ such that for all $x \in M$, $L_R(x)$ (the ball of radius $R > 0$ centered at $x$ in the leaf $L(x)$) is $\eps$-dense in $M$. This statement easily implies that $\Lambda_+(V)=M$ as the flow is expansive on the leaves. Given $x \in M$, define $R_\eps(x) := \inf \{R > 0 ~:~ L_R(x) \text{ is $\eps$-dense}\} > 0$. Note that $R_\eps(x) < +\infty$ as each leaf is assumed to be dense in $M$. By continuity of the foliation $\mc{F}$ with respect to $x$, one sees that the function $x \mapsto R_\eps(x)$ is upper semi-continuous on $M$. Therefore, it is bounded from above, that is $\sup_{x \in M} R_\eps(x) \leq R < +\infty$. This proves the claim. 
\end{proof}

For Anosov flows, the same result holds provided transitivity is assumed:

\begin{lemma}
\label{lemma:fino}
Suppose that $\varphi_t : M \to M$ is a transitive Anosov flow. Let $x \in M$ and $V \subset W^u(x)$ be an open subset. Then $\Lambda_+(V) = M$. 
\end{lemma}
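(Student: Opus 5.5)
The plan is to combine three standard facts about transitive Anosov flows: density of unstable leaves in the weak-unstable sense, the local product structure, and the uniform expansion of $W^u$ leaves. First, shrinking $V$, we may assume $V = W^u_\delta(x)$ for some $\delta > 0$. Since $\Lambda_+(V)$ is a closed, nonempty (by compactness of $M$), $\varphi_t$-invariant set, it suffices to show that for every $z \in M$ and every $\eta > 0$ there exist arbitrarily large $t$ with $\varphi_t(V) \cap B(z,\eta) \neq \emptyset$.

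The key input is the classical fact that for a transitive Anosov flow, every weak-unstable leaf $W^{cu}(y) = \bigcup_{s \in \R} \varphi_s(W^u(y))$ is dense in $M$. Moreover, as in the proof of Lemma \ref{lemma:matin}, a compactness and semicontinuity argument upgrades this to uniform density: for every $\eta > 0$ there exists $R > 0$ such that for all $y \in M$, the set $\bigcup_{|s| \leq R} \varphi_s(W^u_R(y))$ is $\eta$-dense in $M$. To prove the uniformity, I would argue by contradiction, using that the local weak-unstable plaques vary continuously with $y$: if a sequence $y_k$ failed for $R_k \to \infty$, then a limit point $y$ would have a non-dense weak-unstable leaf. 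This step, in particular handling the flow direction, is where I expect the main obstacle. Density of $W^{cu}(y)$ itself follows from transitivity via the local product structure and density of periodic orbits, and I would cite it as standard (for instance from Anosov's work or from a textbook on hyperbolic flows).

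Given this, fix $z$ and $\eta$, and take $t \geq T$ large. By expansion \eqref{equation:lambda2-introduction}, for $t \geq T_1(R,\delta)$ the set $\varphi_t(W^u_\delta(x))$ contains $W^u_R(\varphi_t x)$. Hence $\bigcup_{|s|\leq R}\varphi_{t+s}(V) \supset \bigcup_{|s| \le R}\varphi_s(W^u_R(\varphi_t x))$, which is $\eta$-dense. So there are $s \in [-R,R]$ and $w \in V$ with $\varphi_{t+s}(w) \in B(z,\eta)$, and $t+s \geq T - R$ can be made arbitrarily large. This gives $z \in \Lambda_+(V)$, and therefore $\Lambda_+(V) = M$.

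One subtlety must be checked: $\Lambda_+$ is defined using $\varphi_t(V)$ with $t \geq T$ only, and the times $t+s$ are of this form once $t \geq T+R$. The uniform density step is the only nontrivial ingredient; everything else is routine expansion along unstable leaves.
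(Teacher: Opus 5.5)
Your argument is correct, but it takes a different route from the paper. The paper invokes Plante's dichotomy for transitive Anosov flows. In the first case every strong unstable leaf is dense, and Lemma \ref{lemma:matin} applies verbatim. In the second case the flow is a suspension with constant roof, which the paper handles only by a sketchy slice-by-slice remark. You instead use only the minimality of the weak-unstable foliation $W^{cu}$. This is a more elementary consequence of transitivity and holds in both branches of Plante's alternative. You then absorb the flow direction into the time variable: since $\Lambda_+(V)$ involves all times $t' \geq T$, the bounded shifts $|s| \leq R$ cost nothing once $t \geq T+R$.

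What your approach buys is a single argument with weaker input and no separate treatment of suspensions. The price is that you must redo the uniform-density compactness step with the extra flow parameter rather than quoting Lemma \ref{lemma:matin}. The step you flag as the main obstacle is in fact routine. Suppose $\bigcup_{|s|\le k}\varphi_s(W^u_k(y_k))$ misses $B(z_k,\eta)$, with $y_k \to y$ and $z_k \to z$. Fix $R$ and take $k \ge R$. Two facts apply: $(s,w)\mapsto \varphi_s(w)$ is uniformly continuous on $[-R,R]\times M$, and every point of $W^u_R(y)$ is a limit of points of $W^u_R(y_k)$ by transversal continuity of the foliation. Together they show that $\bigcup_{|s|\le R}\varphi_s(W^u_R(y))$ misses $B(z,\eta/2)$. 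Hence so does $W^{cu}(y)$, contradicting its density.

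One minor point: take $V \neq \emptyset$ and recenter at some $x' \in V$ with $W^u_\delta(x') \subset V$, since $x$ itself need not lie in $V$.
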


\begin{proof}
By \cite[Theorem 1.8]{Plante-72}, under the assumption that the flow is transitive, only the following two possibilities can occur: (i) every strong stable/unstable leaf is dense in $M$; (ii) the flow is a suspension by a constant roof function. If (i) holds, then Lemma \ref{lemma:matin} can be applied. The case (ii) of a suspension follows from (i) by arguing similarly in each slice of the suspension.
\end{proof}

We refer the reader to \cite{AvilaCrovisierWilkinson} and references therein for other instances where minimality of the strong unstable foliation is expected in the partially hyperbolic setting. 

\subsubsection{Global smoothness} We now prove Corollaries \ref{corollary:utile} and \ref{corollary:smoothness}:

\begin{proof}[Proof of Corollary \ref{corollary:utile}]
Let $u \in \mc{D}'_{\Gamma}(M,E)$ such that $(\X-\lambda) u = 0$. Suppose that $u|_V$ is smooth for some open subset $V \subset W^u(x_0)$. By Lemma \ref{lemma:matin}, any point $x \in M$ belongs to $\Lambda_+(V) := \cap_{T > 0} \overline{\cup_{t \geq T} \varphi_t(V)}$. We can thus apply Theorem \ref{theorem:main} for large values of $N \gg 1$ (depending on $\lambda$); this yields that for all $x \in M$, $u \in H^N(W^u_1(x),E)$ with uniformly controlled norm with respect to $x \in M$. As this holds for all $N \gg 1$, we obtain that $u \in C^\infty(W^u_1(x),E)$ with uniformly controlled norm with respect to $x \in M$, that is $u$ is leafwise smooth, $u \in C^\infty_{\mathrm{leaf}}(M,E)$. If $\mc{F}$ is absolutely continuous, by Lemma \ref{lemma:wavefront-leafwise}, $\WF(u)$ is contained in the conormal bundle $N^*\mc{F}$. However, $\WF(u) \subset \Gamma$ by assumption; since $\Gamma \cap N^*\mc{F} = \{0\}$, we find that $\WF(u) = \emptyset$, that is $u \in C^\infty(M,E)$. 
\end{proof}

\begin{proof}[Proof of Corollary \ref{corollary:smoothness}]
Same proof as Corollary \ref{corollary:utile} with $\Gamma = E_s^\perp$, $\mc{F}=W^u$, using Lemma \ref{lemma:fino}, and the absolute continuity of the unstable foliation in this case (see \cite[Theorem 2]{DeLaLlave-01} for instance).
\end{proof}

\subsubsection{Support} Finally, we turn to the proof of Corollary \ref{corollary:support}:

\begin{proof}[Proof of Corollary \ref{corollary:support}]
Let $u \in \mc{D}'_{E_s^\perp}(M,E)$ be a Pollicott-Ruelle resonant state associated with $\lambda$, that is $(\X-\lambda) u = 0$. Suppose that $u|_V = 0$ for some open subset $V \subset W^u(x)$. Then $u \equiv 0$ on the flow-out $\Omega_+(V) = \cup_{t \geq 0} \varphi_t(V)$. Following the arguments in the proof of Theorem \ref{theorem:main}, this easily implies that for all $x \in \Lambda_+(V)$, $u|_{W^u_1(x)} \equiv 0$. Applying the Fubini formula \eqref{equation:fubini} (the unstable foliation is absolutely continuous in this case, see \cite[Theorem 2]{DeLaLlave-01}), we find that $u \equiv 0$. 
\end{proof}

\bibliographystyle{alpha}
%\nocite{*}
\bibliography{biblio}

\end{document}